\def\@xocreysp{ } %
\newcommand{\mpar}[1]{}
\newcommand{\micropar}[1]{}
\newcommand\fixit[1]{\ignorespaces}
\newtheorem{thm}{Theorem}[section]
\newtheorem{conj}[thm]{Conjecture}
\newtheorem{cor}[thm]{Corollary}
\newtheorem{lem}[thm]{Lemma}
\newtheorem{prop}[thm]{Proposition}
\theoremstyle{definition}
\newtheorem{defn}[thm]{Definition}
\newtheorem{ex}[thm]{Example}
\theoremstyle{remark}
\newtheorem{rem}[thm]{Remark}
\numberwithin{equation}{section}
\newtheorem{remark}[thm]{Remark}
\newcommand{\CaA}{\mathcal A}
\newcommand{\CaB}{\mathcal B}
\newcommand{\CaO}{\mathcal O}
\newcommand{\bbF}{\mathbb F}
\newcommand{\bbR}{\mathbb R}
\newcommand{\bbZ}{\mathbb Z}
\newcommand{\rd}{\mathrm d}
\newcommand{\xo}{x}
\newcommand{\sfG}{\mathsf G}
\newcommand{\sfT}{\mathsf T}
\newcommand{\sfM}{\mathsf M}
\newcommand{\sfU}{\mathsf U}
\newcommand{\scrT}{(\mathscr T)}
\newcommand{\scrE}{(\mathscr E)}
\newcommand{\val}{\operatorname{ord}}
\newcommand{\Ad}{\mathrm{Ad}}
\newcommand{\ad}{\mathrm{ad}}
\newcommand{\pid}{\mathfrak p}
\newcommand{\orbi}{\mathcal O}
\newcommand{\orb}{\mathscr O}
\newcommand{\bG}{\mathbf G}
\newcommand{\bT}{\mathbf T}
\newcommand{\bU}{\mathbf U}
\newcommand{\bS}{\mathbf S}
\newcommand{\bZ}{\mathbf Z}
\newcommand{\lieG}{\mathfrak g}
\newcommand{\lieT}{\mathfrak t}
\newcommand{\blieG}{\boldsymbol{\lieG}}
\newcommand{\blieT}{\boldsymbol{\lieT}}
\newcommand{\datum}{\Sigma}
\newcommand{\Bd}{\CaB}
\newcommand{\Apt}{\CaA}
\newcommand{\dth}{\rd}
\newcommand{\rtm}{r}
\newcommand{\supp}{\mathrm{Supp}}
\newcommand{\reg}{{\mathrm{reg}}}
\newcommand{\Hom}{\mathrm{Hom}}
\newcommand{\cind}{\textrm{c-}\mathrm{ind}}
\newcommand{\Gal}{\mathrm{Gal}}
\newcommand{\vol}{\mathrm{vol}}
\newcommand{\midvsp}{\vspace{7pt}}
\newcommand{\A}{\mathbb{A}}
\newcommand{\bbE}{\mathbb E}
\newcommand{\cF}{{\mathcal{F}}}
\newcommand{\cA}{{\mathcal{A}}}
\newcommand{\fkg}{{\mathfrak g}}
\newcommand{\fkk}{{\mathfrak k}}
\newcommand{\fkS}{{\mathfrak S}}
\newcommand{\Z}{\mathbb{Z}}
\newcommand{\Q}{\mathbb{Q}}
\newcommand{\F}{\mathbb{F}}
\newcommand{\R}{\mathbb{R}}
\newcommand{\C}{\mathbb{C}}
\newcommand{\scusp}{\mathrm{sc}}
\newcommand{\elp}{\mathrm{ell}}
\newcommand{\hs}{\mathrm{hs}}
\newcommand{\ord}{\mathrm{ord}}
\newcommand{\pl}{\hat{\mu}^{\mathrm{pl}}}
\newcommand{\bs}{\backslash}
\newcommand{\semis}{\mathrm{ss}}
\newcommand{\cnt}{\mathrm{count}}
\newcommand{\St}{\mathrm{St}}
\newcommand{\Mot}{\mathrm{Mot}}
\newcommand{\rk}{\mathrm{rk}}
\newcommand{\rank}{{\rm rank}}
\def\Gal{{\rm Gal}}
\newcommand{\spec}{{\rm spec}}
\newcommand{\tr}{{\rm tr}\,}
\newcommand{\disc}{{\rm disc}}
\newcommand{\geom}{{\rm geom}}
\newcommand{\GL}{{\rm GL}}
\newcommand{\alg}{{\rm alg}}
\newcommand{\Ram}{{\rm Ram}}
\newcommand{\triv}{{\mathbf{1}}}
\newcommand{\EP}{{\rm{EP}}}
\newcommand{\ur}{{\mathrm{ur}}}
\newcommand{\Yu}{{\rm Yu}}
\newcommand{\cH}{{\mathcal{H}}}
\newcommand{\cO}{\mathcal{O}}
\newcommand{\can}{{\rm can}}
\newcommand{\Irr}{{\rm Irr}}
\newcommand{\Res}{\mathrm{Res}}
\def\hat{\widehat}
\def\lg{\langle}
\def\rg{\rangle}
\def\hra{\hookrightarrow}
\def\ra{\rightarrow}
\def\ol{\overline}
\def\tilde{\widetilde}
\newcommand{\SL}{\mathrm{SL}}
\def\benu{\begin{enumerate}}
\def\eenu{\end{enumerate}}
\def\beq{\begin{equation}}
\def\eeq{\end{equation}}
\def\bit{\begin{itemize}}
\def\eit{\end{itemize}}
\DeclareMathOperator {\PGL} {PGL}
\newcommand{\SB}{\backslash}
\DeclareMathOperator {\Mtr}  {tr}
\begin{document}

\title[Asymptotics of supercuspidal representations]{Asymptotic behavior of supercuspidal representations and Sato-Tate equidistribution for families}

\author{Ju-Lee Kim}
\address{Department of Mathematics, Massachusetts Institute of Technology,
77 Massachusetts Avenue, Cambridge, MA 02139, USA}
\email{julee@math.mit.edu}
\author{Sug Woo Shin}
\address{Department of Mathematics, UC Berkeley, Berkeley, CA 94720, USA$//$ Korea Institute for Advanced Study, 85 Hoegiro,
Dongdaemun-gu, Seoul 130-722, Republic of Korea}
\email{sug.woo.shin@berkeley.edu}
\author{Nicolas Templier}
\address{Department of Mathematics, Cornell University,
Ithaca, NY 14850, USA}
\email{templier@math.cornell.edu}

\date{\today}
\subjclass[2010]{20G25 22E35 11F55}

\begin{abstract}
We establish properties of families of automorphic representations as we vary prescribed supercuspidal representations at a given finite set of primes. For the tame supercuspidals constructed by J.-K.~Yu we prove the limit multiplicity property with error terms. Thereby we obtain a Sato-Tate equidistribution for the Hecke eigenvalues of these families. The main new ingredient is to show that the orbital integrals of matrix coefficients of tame supercuspidal representations with increasing formal degree on a connected reductive $p$-adic group tend to zero uniformly for every noncentral semisimple element.

\end{abstract}

\maketitle
\thispagestyle{empty}

\tableofcontents

\section{Introduction}\label{s:intro}

\subsection{Limit multiplicity}\label{sub:limit-mult}
We begin this introduction by explaining our results on counting discrete automorphic
representations.
Let $G$ be a semisimple group over a totally real field $F$. Write
$F_\infty:=F\otimes_\Q \R$.
Consider a sequence of lattices $\{\Gamma_j\}_{j\ge 1}$ in $G(F_\infty)$ whose covolumes
tend to infinity as $j\to \infty$. For an irreducible unitary representation $\pi_\infty$
denote by $m(\pi_\infty,\Gamma_j)$ the multiplicity of $\pi_\infty$ occurring in the
discrete spectrum of $L^2(\Gamma_j\backslash G(F_\infty))$.
DeGeorge-Wallach~\cite{DeGeor-Wall,DeGeor-Wall-II} in the compact case and
Rohlfs-Speh~\cite{RS87} and Savin~\cite{Sav89} in the arithmetic non-compact case
 proved that if $\{\Gamma_j\}$ is a normal series whose intersection is the identity, then
\begin{equation}\label{DW}
\lim_{j\to \infty}
\frac{m(\pi_\infty,\Gamma_j)}{\vol(\Gamma_j \SB G)} = \deg(\pi_\infty).
\end{equation}
Here $\deg(\pi_\infty)$ is the formal degree which by convention is non-zero if and only if $\pi_\infty$ is square-integrable. By different methods it is shown in~\cite{BG7:betti11} that~\eqref{DW} holds if $\{\Gamma_j\}$ is Benjamini-Schramm convergent and uniformly discrete (which recovers the compact case but not the non-compact case).

Our goal is to investigate refinements where instead of the lattice $\Gamma_j$ we impose
a prescribed supercuspidal representation $\sigma_j$.
Let $u$ be a finite place, and consider a sequence
$\{\sigma_j\}_{j\ge1}$ of tame supercuspidal representations of $G(F_u)$ whose formal
degrees tend to infinity as $j\to \infty$. For an irreducible algebraic representation
$\xi$ of $G(F_\infty)$ with regular highest weight, let $\Pi_\infty(\xi)$ denote the
$L$-packet of square-integrable representations of $G(F_\infty)$ whose infinitesimal and
central characters are dual to those of $\xi$.
Write $m(\xi,\sigma_j)$ for the number of discrete automorphic representations $\pi$ in
$L^2(G(F)\backslash G(\A_F))$, counted with automorphic multiplicity $m(\pi)$, such that
$\pi_\infty\in\Pi_\infty(\xi)$, $\pi_u\simeq\sigma_j$, and $\pi$ is unramified at all
finite places away from $u$. By results of Harish-Chandra the cardinality
$|\Pi_\infty(\xi)|$ is equal to the order of the Weyl group of $G(F_\infty)$ divided by
the order of the Weyl group of a maximal compact subgroup of $G(F_\infty)$. Informally
one of our main results (Corollary \ref{cor:limit-multiplicity}, cf.
\eqref{e:intro-lim-mult} below) states,
provided that the residue characteristic of $F_u$ is sufficiently large, that
\begin{equation}\label{i:limit-mult}
\lim_{j\to \infty}
\frac{m(\xi,\sigma_j)}{\deg(\sigma_j)} = c |\Pi_\infty(\xi)|\dim \xi
\end{equation}
for a positive constant $c$ independent of $\xi$. We refer to \S\ref{sub:simple-TF}
for
the determination of $c$ which is related to the Tamagawa number of $G$.

Note that the integer $|\Pi_\infty(\xi)|\dim \xi$ is equal to $\sum_{\pi_\infty\in
\Pi_\infty(\xi)} \deg(\pi_\infty)$ up to a multiplicative constant depending only on the
Haar measure on $G(F_\infty)$.
Compared with~\eqref{DW}, we are averaging over $\pi_\infty$ ranging in the $L$-packet
$\Pi_\infty(\xi)$ for
technical simplicity in the trace formula; this simplification does not interfere with
the new phenomena at the finite prime $u$ that we are concentrating on.

\begin{ex}\label{ex:Gross}
 For the group $\PGL(2)$ consider discrete automorphic representations that ramify above
 a single prime $q$ and are unramified elsewhere. Let $\mathcal{D}_k$ be the discrete
 series representation of $\PGL(2,\R)$ of weight $k\ge 2$ (necessarily even). For each
 simple supercuspidal representation $\sigma$ of $\PGL(2,\Q_q)$ with $q > 3$, there is an
 \emph{exact} multiplicity formula
\begin{equation}\label{Gross:q3}
m(\mathcal{D}_k,\sigma) + m(\mathcal{D}_k,\sigma') = \frac{k-1}{12}(q^2-1),
\end{equation}
where $\sigma'$ is the other representation
with the same affine generic character as $\sigma$, see~~\cite{Gross:prescribed}. (The
assumptions in
~\cite{Gross:prescribed} differ slightly, but one can verify that the same argument
applies, the key point being that $\PGL(2,\Q)$ has no $q$-torsion.)

Simple supercuspidal representations for $\PGL_2(\Q_q)$ coincide with the representations
of $\GL_2(\Q_q)$ of conductor $q^3$ and trivial central character. There are $2(q-1)$
distinct simple supercuspidal $\sigma$'s, partitioned into $q-1$ pairs
$\{\sigma,\sigma'\}$, thus~\eqref{Gross:q3} leads to the
observation~\cite{Gross:prescribed} that for any even integer $k\ge 2$ and any prime
$q> 3$, the dimension of the space $S_k(q^3)^{\mathrm{new}}$ of newforms of weight $k$
and level
$\Gamma_0(q^3)$
is\footnote{This formula can also be established from
\[
 \dim S_k(q^3)^{\mathrm{new}} =
\dim S_k(q^3)
- 2 \dim S_k(q^2)
+ \dim S_k(q),
\]
and the dimension formulas of $S_k(N)$ derived from
Riemann-Roch~\cite{Martin:dimension}.}
\begin{equation*}
m(\mathcal{D}_k,\Gamma_0(q^3))^{\mathrm{new}}=\dim S_k(q^3)^{\mathrm{new}} =
\frac{k-1}{12} (q+1)(q-1)^2.
\end{equation*}
Since the formal degree of $\mathcal{D}_k$ is $\frac{k-1}{12}$, this is a strong form of
the limit multiplicity property~\eqref{DW} as $q\to \infty$.

On the other hand, we establish by the same method of proof as~\eqref{i:limit-mult},
see also~\cite{Weinstein09}, the following asymptotic
\begin{equation*}
m(\mathcal{D}_k,\sigma) \sim \frac{k-1}{24}(q^2-1), \quad \text{as $k,q\to \infty$},
\end{equation*}
 because the formal degree of simple
supercuspidals is $\deg(\sigma)=\deg(\sigma')=\frac{q^2-1}{2}$.
Thus, the asymptotic~\eqref{i:limit-mult} and the exact formula~\eqref{Gross:q3} are
consistent, although none implies the other.
\end{ex}

\subsubsection*{Depth aspect}
Recall the notion of depth of an admissible representation~\cite{MP94}.
As a special case of Conjecture~\ref{c:asymptotic-char} below, we conjecture that
for a
sequence $(\sigma_j)_{j\ge 1}$ of supercuspidal representations of a $p$-adic group,
the
condition
$\deg(\sigma_j) \to \infty$ is equivalent to $\operatorname{depth}(\sigma_j)\to \infty$.
In our present context of tame supercuspidals, this is easy to verify (see,
e.g., the
proof of Proposition~\ref{p: power-saving}).

We may refer to the asymptotic~\eqref{i:limit-mult} as a limit multiplicity
result in the depth aspect.
Indeed we view it as an analogue at finite places of the
limit multiplicity in the ``weight aspect"~\cite{Shi-plan,ST:Sato-Tate}.
Indeed in the weight aspect, the roles of $\sigma_j$  and $\xi$ are
interchanged,
namely
$\sigma_j$ remains fixed and $\xi\ra\infty$ (in the sense that the highest weight for
$\xi$ gets arbitrarily far from the walls), whereas the above families
have fixed $\xi$ and $\deg(\sigma_j)\ra\infty$. In fact, we
also
 establish hybrid results in this paper, where both $\xi$ and $\sigma_j$ tend
to infinity.
For example our results below allow us to obtain an error bound for
~\eqref{i:limit-mult}, saving powers for both $\dim(\xi)$ and $\deg(\sigma_j)$ (Corollary
\ref{c:disc-seires-case-restated} below).

\subsection{Quantitative equidistribution for a family}\label{s:quantitative}
In the same context as before, for simplicity, let $G$ be a split semisimple group over a totally real field $F$ with trivial center. (In
the main text $G$ need not be either split or semisimple with trivial center.) Let each of $S_0$ and $S$ be a finite set of finite places of
$F$ such that $S\neq \emptyset$ (but $S_0$ could be empty) and $S_0\cap S=\emptyset$. Denote by $S_\infty$ the set of infinite places of $F$
and put $\fkS:=S_\infty \cup S_0 \cup S$. Set $F_{S_0}:=\prod_{v\in S_0} F_v$ and $\A^S_F:=\prod'_{v\notin S} F_v$. Let $K_{S_0}$ be an open compact subgroup of $G(F_{S_0})$, and $K^\fkS$ an open compact subgroup of $G(\A^\fkS_F)$ which is the product of hyperspecial subgroups over $v\notin \fkS$ (which arise from a global Chevalley group for $G$ over $\Z$). We will consider
\begin{itemize}
  \item irreducible algebraic representations $\xi$ of $G(F_\infty)$ with regular highest weight,
  \item irreducible supercuspidal representations $\sigma$ of $G(F_S)$.
\end{itemize}
For a technical reason we will impose the condition that $\xi\in \Irr^{\reg}_C(G(F_\infty))$ for a fixed constant $C\ge1$ (see \S\ref{sub:bounds} for details; the error bound in the theorem depends on $C$).

Let $\cF(\xi,\sigma,K_{S_0})$ be the multi-set of discrete automorphic representations $\pi$, counted with multiplicity $m(\pi)\dim( \pi_{S_0})^{K_{S_0}}$ (a number occurring naturally in the limit multiplicity problem), such that $\pi_\infty\in \Pi_\infty(\xi)$, $\pi_S\simeq \sigma$, and $(\pi^\fkS)^{K^\fkS}\neq 0$.
We let both $\xi$ and $\sigma$ vary, which puts discrete series representations at infinite places (grouped in $L$-packets) and supercuspidal representations at finite places on an equal footing.
Write $m(\xi,\sigma,K_{S_0}):=|\cF(\xi,\sigma,K_{S_0})|$. Fix a Haar measure on
$G(\A_F)$.

Our main result in a simpler form is the following Sato-Tate equidistribution for the family $\cF$.
See Theorem \ref{t:disc-series-case} and Corollaries \ref{cor:limit-multiplicity} and \ref{c:disc-seires-case-restated} for precise statements.
In the special case where $\sigma$ is fixed, our result generalizes \cite[Thm.~9.19]{ST:Sato-Tate}.

\begin{thm}\label{t:sato-tate}
Suppose that the residue characteristic of every $v\in S$ is sufficiently large (in a way depending on $G$).
We have the limit multiplicity formula as $\dim(\xi),\deg(\sigma)\ra\infty$
\begin{equation}\label{e:intro-lim-mult}
 m(\xi,\sigma,K_{S_0}) \sim c\cdot \dim(\xi)\deg(\sigma)
 \end{equation}
for an explicit constant $c>0$.
Moreover there exist $\nu,A>0$ depending only on $G$
 such that for every $\xi$ and $\sigma$ as above, and for every function $\phi:G(\A^\fkS_F)\to \C$ which is the characteristic function of a $K^\fkS$-double coset, we have the asymptotic formula
\begin{equation}\label{e:sato-tate}
\sum_{\pi\in \cF(\xi,\sigma,K_{S_0})}
\tr \pi^\fkS(\phi)
=
m(\xi,\sigma,K_{S_0}) \phi(e)
+ O(m(\xi,\sigma,K_{S_0})^{1-\nu} \|\phi\|_1^A ).
\end{equation}
The multiplicative constant depends on $G$, $C$, $S$, $K_{S_0}$ but is independent of $\xi$, $\sigma$ and $\phi$.
\end{thm}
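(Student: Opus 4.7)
The plan is to invoke Arthur's simple invariant trace formula applied to a carefully chosen factorizable test function $f = f_\infty \cdot f_S \cdot 1_{K_{S_0}} \cdot \phi \cdot 1_{K^\fkS}$. At the archimedean places I take $f_\infty$ to be an Euler--Poincar\'e function (or an average of pseudo-coefficients) for the $L$-packet $\Pi_\infty(\xi)$, and at the places of $S$ I take $f_S$ to be a matrix coefficient of $\sigma$, normalized so that $f_S(e) = \deg(\sigma)$. Because $f_S$ is cuspidal on a nonempty set of finite places and $f_\infty$ is concentrated on discrete series, all continuous-spectrum terms on both sides vanish: the spectral side reduces, up to an overall factor of $|\Pi_\infty(\xi)|$, to $\sum_{\pi \in \cF(\xi,\sigma,K_{S_0})} \tr \pi^\fkS(\phi)$, while the geometric side collapses to a sum of elliptic semisimple orbital integrals over $\gamma \in G(F)$.

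The contribution of $\gamma = e$ is a single adelic volume times $f(e)$; by the factorization this equals a constant multiple of $\dim(\xi)\deg(\sigma)\phi(e)$, giving both the limit multiplicity~\eqref{e:intro-lim-mult} and the $\phi(e)$-term in~\eqref{e:sato-tate}, with $c$ expressible through the Tamagawa number (cf.~\S\ref{sub:simple-TF}). What remains is the non-identity contribution
\[
\sum_{\gamma \neq e} \vol(G_\gamma(F)\backslash G_\gamma(\A_F)) \cdot O_\gamma(f_\infty) \cdot O_\gamma(f_S) \cdot O_\gamma(1_{K_{S_0}}) \cdot O_\gamma(\phi),
\]
which I must bound by $O(m(\xi,\sigma,K_{S_0})^{1-\nu} \|\phi\|_1^A)$.

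The main new ingredient announced in the abstract provides a uniform decay estimate $|O_\gamma(f_S)| = O(\deg(\sigma)^{1-\nu_1})$ for every noncentral semisimple $\gamma$, for some $\nu_1 > 0$. Combining this with the Euler--Poincar\'e bound $|O_\gamma(f_\infty)| \ll \dim(\xi)^{1-\nu_2}$ for $\gamma$ noncentral, an elementary bound $|O_\gamma(\phi)| \ll \|\phi\|_1$, and a standard adelic lattice-point count bounding the number of relevant $\gamma$ by a polynomial in $\|\phi\|_1$, $\deg(\sigma)$, and $\dim(\xi)$ (adapting the Bruhat--Tits argument of~\cite{ST:Sato-Tate}), one sums up the geometric contributions and verifies the claimed power saving. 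Since the main term is of size $\dim(\xi)\deg(\sigma)$, the powers saved at $S$ and at infinity combine to produce an exponent $\nu > 0$ in $m(\xi,\sigma,K_{S_0})^{1-\nu}$.

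The hard part is the uniform orbital integral estimate for $f_S$ at noncentral semisimple $\gamma$. This requires tracing through J.-K.~Yu's inductive construction of tame supercuspidals, relating the support and size of a matrix coefficient to the Moy--Prasad filtration data of the underlying generic cuspidal datum, and extracting a power saving in $\deg(\sigma)$ that is uniform in both the datum and the element $\gamma$; the condition on large residue characteristic enters here to ensure the needed exhaustion and genericity hypotheses. A secondary obstacle is the lattice-point count with explicit polynomial dependence on $\deg(\sigma)$: because the support of $f_S$ shrinks as $\deg(\sigma)$ grows, the Bruhat--Tits geometric counting of~\cite{ST:Sato-Tate} must be refined to track the depth of $\sigma$ and the size of the naturally associated parahoric, which should give the desired exponent $A$ in $\|\phi\|_1^A$.
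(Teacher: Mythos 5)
Your high-level strategy is the same as the paper's: apply the simple trace formula to the product of an Euler--Poincar\'e function at infinity, an explicit matrix coefficient $f_\sigma$ (with $f_\sigma(e)=\deg\sigma$) at $S$, the indicator of $K_{S_0}$, and $\phi\cdot\triv_{K^\fkS}$; the central element gives the main term, and the crucial new ingredient is the uniform power-saving bound on $O_\gamma(f_\sigma)$ for noncentral semisimple $\gamma$. That is exactly the skeleton of Lemma~\ref{l:LHS=T_ell}, Theorem~\ref{t:disc-series-case}, and Corollaries~\ref{cor:limit-multiplicity}--\ref{c:disc-seires-case-restated}. However, two of the intermediate steps as you state them are incorrect, and fixing them is where the real work in \S\ref{sub:bounds} lies.

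First, the claimed ``elementary bound $|O_\gamma(\phi)|\ll\|\phi\|_1$'' is false: orbital integrals of a fixed test function blow up as $\gamma$ approaches a singular element. The correct and much deeper statement, due to Cluckers--Gordon--Halupczok via motivic integration (Prop.~\ref{prop:bounds-prelim}(iv), cf.\ \cite[Thm~14.1]{ST:Sato-Tate}), is that $D(\gamma)^{1/2}\,O_\gamma(\triv_{K^\hs_v\lambda(\varpi_v)K^\hs_v})\le q_v^{A_3+B_3\kappa_v}$, uniformly in $v$, $\gamma$, and $\lambda$. The same $D(\gamma)^{1/2}$-factor must also appear in your archimedean bound ($D_\infty(\gamma)^{1/2}|O_\gamma(f_\xi)|\ll\dim(\xi)^{1-\nu_\infty}$, Prop.~\ref{prop:bounds-prelim}(ii)) and in the supercuspidal bound at $S$ (Theorem~\ref{t:asymptotic-orb-int}). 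The crux of the proof is that these local discriminant factors are traded against one another across places using the product formula $\prod_v|1-\alpha(\gamma)|_v=1$ for $\alpha\in\Phi_\gamma$ (inequalities \eqref{e:product-1-alpha(gamma)}--\eqref{e:bound-1-alpha(gamma)}); omitting the Weyl discriminants loses this mechanism entirely, and the resulting sum over $\gamma$ cannot be controlled.

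Second, your closing remark that ``the Bruhat--Tits geometric counting must be refined to track the depth of $\sigma$'' is a misdiagnosis of the difficulty. In the paper's proof the constraint at $S$ entering the lattice-point count $\mathcal{Y}(\underline\kappa)$ (Prop.~\ref{prop:bounds-prelim}(i)) is a \emph{fixed} compactum $U_v$ --- a union of representatives of elliptic maximal tori in $G(F_v)$ --- independent of $\sigma$ and of $\xi$. The fact that $\supp(f_\sigma)=J_\Sigma$ shrinks as $\deg\sigma\to\infty$ is already absorbed into the uniform bound $D_S(\gamma)^{1/2}|O_\gamma(f_\sigma)|\ll\deg(\sigma)^{1-\nu_S}$; it does not and need not enter the counting of rational conjugacy classes. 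The count's dependence is only on $\underline\kappa$ (hence on $\|\phi\|_1$), giving the $\|\phi\|_1^A$ in the error. The genuinely hard work is indeed the uniform supercuspidal orbital-integral bound you identify, but the argument you outline for the remaining geometric-side bookkeeping would not close as written.
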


\begin{ex}
Suppose $G=\PGL(2)$ and $F=\Q$. We take $S_0=\emptyset$ and $S$ any non-empty finite set
of sufficiently large finite primes. We are counting, for even integer weights
$k \ge 2$, and irreducible
supercuspidal representations $\sigma$ of $\PGL(2,\Q_S)$, the number
$m(\mathcal{D}_k,\sigma)$ of cusp forms $f\in \cF(\mathcal{D}_k,\sigma)$ of weight $k$
unramified outside of $S$ and with local component $\sigma$ at $S$.
The limit multiplicity asymptotic~\eqref{e:intro-lim-mult} recovers~\cite{Weinstein09} as
in Example~\ref{ex:Gross}:
\[
 m(\mathcal{D}_k,\sigma)  \sim \frac{k-1}{12} \deg(\sigma) \quad \text{as $k,\dim(\sigma)\to \infty$}.
\]
The second assertion~\eqref{e:sato-tate} on Sato-Tate equidistribution is new already in
this case of $\PGL(2)$. For example, if the function $\phi$ is a Hecke operator
$T_n$  for some integer
$n\ge 1$ not divisible by any prime in $S$, then:
\begin{equation}\label{GL2-Sato-Tate}
\sum_{f\in \cF(\mathcal{D}_k,\sigma)} \frac{a_n(f)}{\sqrt{n}}
= m(\mathcal{D}_k,\sigma) \delta_{n=\square} + O(n),
\end{equation}
where $\delta_{n=\square}$ is one if $n$ is a perfect square and zero otherwise. Note
that we normalize $a_n(f)$ in such a way that
Deligne's bound reads $|a_p(f)|\le 2$. The above precise error term
(corresponding to $\nu=1$ and $A=0$) is derived from the Sally-Shalika character
formula~\cite[App.~A]{KST-localconstancy}.
Similarly as in~\S\ref{sub:limit-mult}, the equidistribution~\eqref{GL2-Sato-Tate} is a
refinement of an earlier result. Namely, Serre~\cite{Serre:pl} considered the trace of
the
Hecke operator $T_n$ on the space $S_k(N)$ of cusp forms $f$ of weight $k$ and level
$\Gamma_0(N)$, and established for $n\ge 1$ coprime with $N$,
\[
\sum_{f \in \cF(\mathcal{D}_k,\Gamma_0(N))}
\frac{a_n(f)}{\sqrt{n}}
=
m(\mathcal{D}_k,\Gamma_0(N))  \delta_{n=\square} + O(n).
\]
\end{ex}

\subsubsection*{Remark}
Interestingly, the depth aspect families studied in this paper are rather thin compared
to
the families formed by varying a lattice subgroup. In favorable situations, and
assuming that $S_0=\emptyset$, the global root number of $\pi\in
\cF(\xi,\sigma,\emptyset)$ depends only on $\xi$ and $\sigma$.
This almost never happens for thicker families arising from limit multiplicity
problems where the whole lattice subgroup $\Gamma_j$ varies.

\subsubsection*{Conductor vs depth}
We view the sets $\cF(\xi,\sigma,K_{S_0})$ with varying tame supercuspidal
representations $\sigma$ of $G(F_S)$ as forming a harmonic family in the sense
of~\cite{SST}. Then Theorem~\ref{t:sato-tate} essentially gives us the Sato--Tate
equidistribution for families stated as Conjecture~1 in~\cite{SST}. One difference is
that the formulation of~\cite[Conj.~1]{SST} involves analytic conductors whereas our
results are expressed in terms of formal degrees. The relation between formal degree and
conductor is not yet established in general, this is a problem closely related to
that of the
depth
preservation in the local Langlands correspondence~\cite{Yu09}. %

\subsection{Bounds towards Ramanujan}
We can deduce from Theorem~\ref{t:sato-tate} an average bound towards Ramanujan.
For every place $v\not \in \fkS$ and every $\theta>0$, there is $\varrho>0$ such
that
\begin{equation}\label{ram}
\# \{
\pi \in \cF(\xi,\sigma,K_{S_0}),\
\log_v |\alpha(\pi_v)| > \theta
\} \ll m(\xi,\sigma,K_{S_0})^{1-\varrho},
\end{equation}
where $\alpha(\pi_v)$ is the Satake parameter of the unramified representation $\pi_v$.
It is unitarily normalized so that $|\alpha(\pi_v)| = 1$ if and only if $\pi_v$ is
tempered.
The multiplicative constant and the exponent $\varrho$ are
independent of $\xi$ and $\sigma$ (they depend only on $G$, $C$, $S$, $K_{S_0}$,
$\theta$, $v$).
The proof proceeds in the same way as for~\cite[Cor.~1.8]{MT:gln}. Namely we first
construct a function $\phi_1$ which is a bi-$K_v$-invariant function on $G(F_v)$ such
that $\Mtr \pi_v(\phi_1)$ is uniformly small for $|\alpha(\pi_v)|\le 1$ and uniformly
large for $|\alpha(\pi_v)|\ge q_v^{\theta}$.
Then we apply Theorem~\ref{t:sato-tate} for $\phi:=(\phi_1 * \phi^\vee_1)^{*k}$ with
the integer $k\ge 1$ chosen proportional to $\log m(\xi,\sigma,K_{S_0})$,
see~\cite[\S3]{MT:gln}.

The estimate~\eqref{ram} shows that exceptions to the Ramanujan bound are sparse.
For quasi-split classical groups the Ramanujan bound may be reduced to the self-dual or
conjugate self-dual case of general linear groups via work of Arthur~\cite{Arthur} and
Mok~\cite{Mok}. The latter case is settled when cuspidal automorphic representations are
cohomological (over totally real fields in the self-dual case; over CM fields in the
conjugate self-dual case) by \cite{Clo91,Kot92b,Shi11,Clozel:purity,Car12}. In particular
the Ramanujan conjecture is known for the representations $\pi \in
\cF(\xi,\sigma,K_{S_0})$ if $G$ is a split classical group.\footnote{Here we use the fact
that $\xi$ has \emph{regular} highest weight. This forces the representation of Arthur's
$\SL(2)$ in the global Arthur parameter to be trivial by examining the infinitesimal
character at infinite places. %
}
For exceptional groups $G$ very little is known and even a formulation of the Ramanujan conjecture is delicate, see~\cite{Sar05} and~\cite{Shahidi:packet-Ramanujan} for recent treatments.

\subsection{Trace formula and tame supercuspidal coefficients}
We find that the limit multiplicity and quantitative equidistribution described above are related to  asymptotic properties of orbital integrals. The first step in the proof of Theorem~\ref{t:sato-tate} is to express the left-hand side of~\eqref{e:sato-tate} as the spectral side of the trace formula for a suitably chosen test function. Since the weight $\xi$ is regular and $\sigma$ is supercuspidal we can use the simple trace formula.

There exist test functions $f_\sigma$ that single out the given supercuspidal
representation $\sigma$ in the trace formula, obtained by forming matrix
coefficients.\footnote{In general $f_\sigma$ is compactly supported only modulo center of
$G$, but the center is finite as $G$ is semisimple. In the main text we work with
reductive groups with compact center but see Remark \ref{r:non-cpt-center}.}  In our
situation J.-K.~Yu's construction gives $\sigma$ as compactly induced from a finite
dimensional representation on a compact open modulo center subgroup of $G(F_S)$. (Every
$\sigma$ arises in this way if the residue characteristics of places in $S$ are
sufficiently large by the exhaustion theorem~\cite{Kim07}.) This provides an explicit
$f_\sigma$ which is essential for our purpose.

We can now explain in more details the geometric side in the application of the trace formula. The geometric side is a sum over conjugacy classes of semisimple elements $\gamma \in G(F)$ of a volume term times a global orbital integral. The global orbital integral is a product of orbital integrals at ramified places in $S$, for which the main contribution is $\orbi_\gamma(f_\sigma)$, and orbital integrals at unramified places. %

Here we are varying the supercuspidal coefficient $f_\sigma$ which is unlike the usual applications of the trace formula where it is fixed. A general approach to this situation appears in~\cite{ST:Sato-Tate} in the weight aspect and we can use the results of~\cite{ST:Sato-Tate} to estimate most of the terms in the geometric side of the trace formula, except for $\orbi_\gamma(f_\sigma)$ which is new.

For the proof of~\eqref{i:limit-mult} we establish that $|\orbi_\gamma(f_\sigma)|=o(\deg(\sigma))$ as $\deg(\sigma)\to \infty$, and for any fixed $\gamma$. The proof of Theorem~\ref{t:sato-tate} is much more difficult due to the uniformity in $\phi$. As in~\cite{ST:Sato-Tate} the number of terms in the geometric side is unbounded, and uniform estimates for orbital integrals are needed. Moreover the estimate for $\orbi_\gamma(f_\sigma)$ has to be made quantitative and uniform in $\gamma$ which we discuss in the next subsection.

\subsection{Asymptotic behavior of orbital integrals}
We have seen in the previous subsection that our approach leads to the problem of establishing uniform bounds for orbital integrals of supercuspidal coefficients. In general it would be desirable to develop a quantitative theory of orbital integrals. This is for example advocated in the introduction of~\cite{DeBacker-Sally}. Our present problem of establishing uniform bounds for $\orbi_\gamma(f_\sigma)$ goes in this direction.

Theorem~\ref{t:asymptotic-orb-int} below states that there exists a constant $\eta<1$ depending only on the group $G(F_S)$ such that for all noncentral elements $\gamma$ and all tame supercuspidal representation $\sigma$ of $G(F_S)$, we have
\begin{equation}\label{Ogamma}
D(\gamma)^{\frac{1}{2}}\,\left| \orbi_\gamma(f_\sigma)\right| \ll \deg(\sigma)^\eta,
\end{equation}
where $D(\gamma)$ is the discriminant of $\gamma$.
This result is the technical heart of the paper.

The properties of $\orbi_\gamma(f_\sigma)$ are related to the trace character
$\Theta_\sigma(\gamma)$. In fact the two quantities agree if $\gamma$ is regular
elliptic, and we derive some consequences in \S\ref{s:asymptotic-behavior-sc}. However it
should also be noted that for our application it is essential to include the case where
$\gamma$ is \emph{non-regular} elliptic (in which case $\Theta_\sigma(\gamma)$ is
undefined).
For explicit computations of $\Theta_\sigma(\gamma)$ for regular semisimple $\gamma$, we refer to \cite{AS09,DS:supercuspidal,Kaletha:supercuspidal}.
In some special cases $\orbi_\gamma(f_\sigma)$ can be computed exactly, especially if one allows an additional average of $\sigma$ (over an $L$-packet). In fact one could allow $\sigma$ to be not only supercuspidal but also discrete series representations. Notably if $\sigma$ is the Steinberg representation, then Kottwitz~\cite{Kot88} constructed an Euler-Poincar\'e function $f^{\mathrm{EP}}$ which is a pseudo-coefficient for $\sigma$. In this case~\eqref{Ogamma} holds with $f_\sigma=f^{\mathrm{EP}}$ in the horizontal aspect as the residue characteristics of places in $S$ grow to infinity, see Section~\ref{s:Steinberg}.

Though exact formulas for orbital integrals and for trace characters are extremely
difficult to obtain beyond some special cases, we manage to prove the desired
asymptotic~\eqref{Ogamma}. We indicate an outline of our proof. It follows from Yu's
construction that the function $f_\sigma$ can be chosen as a matrix coefficient and is
supported on an explicit open compact subgroup $J\subset G(F_S)$. We recall in
Section~\ref{s:Yu-construction} how $J$ is constructed from a generic $G$-datum. From
this we reduce the estimate to the orbital integral of the characteristic function of a
larger compact open subgroup $L_s$ which is generated by a principal congruence subgroup
and a parahoric subgroup of a proper twisted Levi subgroup.
We conclude the proof in Section~\ref{s:proof} based on a
detailed analysis of Moy-Prasad subgroups.

\subsection{Prescribed Steinberg representations}
In a direction somewhat orthogonal to our main results described above, we have developed the case of families with prescribed Steinberg representations.
We let the group $G$ and the finite sets $S$, $S_0$ of finite places be as before (\S\ref{s:quantitative}). Let $\mathrm{St}_S$ be the Steinberg representation of $G(F_S)$. We consider the multi-set $\cF(\xi,\mathrm{St}_S,K_{S_0})$ of discrete automorphic representations $\pi$, counted with multiplicity $m(\pi)\dim(\pi_{S_0})^{K_{S_0}}$ such that $\pi_\infty\in \Pi_\infty(\xi)$, $\pi_S\simeq \St_S$, and  $(\pi^\fkS)^{K^\fkS}\neq 0$.
We let $S$ vary and refer to  $\cF(\xi,\mathrm{St}_S,K_{S_0})$ as an horizontal family. This is to be compared with the previous vertical families $\cF(\xi,\sigma,K_{S_0})$ where $S$ was fixed and $\sigma$ was a varying supercuspidal representation of $G(F_S)$.

In this case the needed estimates for orbital integrals can be deduced from results of Kottwitz on Euler-Poincar\'e functions~\cite{Kot88}. %
We establish the Sato-Tate equidistribution for these horizontal families. The main point is that our method~\cite{ST:Sato-Tate} described above for vertical families applies almost without change to these horizontal families, but with the simplification that the rather subtle bound on orbital integrals from Sections~\ref{s:orbital-integrals} and \ref{s:proof} is replaced with easier bounds such as~\eqref{e:St-case-2} below.
In the following example we explain the significance of the result for classical modular forms and refer to Theorem~\ref{th:steinbergs} for the precise statement in general.

\begin{ex}\label{ex:ILS} Consider again the group $\PGL(2)$. The Steinberg representation
$\mathrm{St}_q$ and the quadratic twist of the Steinberg representation
$\mathrm{St}_q\otimes \chi_q$ are the two representations of $\PGL(2,\Q_q)$ of level
$\Gamma_0(q)$, thus
\[
m(\mathcal{D}_k,\mathrm{St_q}) + m(\mathcal{D}_k,\mathrm{St_q}\otimes \chi_q) = \dim
S_k(q)^{\mathrm{new}} = \dim S_k(q) - 2 \dim S_k(1). \]
We note that $\cF(\mathcal{D}_k,\mathrm{St}_q)$ (resp. $\cF(\mathcal{D}_k,\mathrm{St}_q\otimes \chi_q)$) is the set of cuspidal modular forms of weight $k$, level $\Gamma_0(q)$ with global root number $1$ (resp. $-1$), see e.g.~\cite{Cogdell2004}. Iwaniec--Luo--Sarnak~\cite[Cor. 2.14]{ILS00} proved that the following asymptotic holds:
\[
m(\mathcal{D}_k,\mathrm{St_q})\sim \frac{k-1}{24}q, \quad \text{as $k,q\to \infty$}
\]
and similarly for $ m(\mathcal{D}_k,\mathrm{St_q}\otimes \chi_q)$. We shall discuss this in a more general context in Section~\ref{s:Steinberg} and revisit the $\PGL(2)$ case in Example~\ref{ex:PGL2}.
\end{ex}

\subsection{Notation}

  Let $F$ be a number field, $S$ (any) finite set of places of $F$, and $S_\infty$ the set of all infinite places of $F$. Then set $F_S:=\prod_{v\in S} F_v$, $F_\infty:=F\otimes_\Q \R$, $\A^S_F:=\prod'_{v\notin S} F_v$, and $\A^{S,\infty}_F:=\prod'_{v\notin S\cup S_\infty} F_v$, where $\prod'$ denotes the restricted product over all places $v$ under the given constraint. Now let $G$ be a connected reductive group over $F$. The center of $G$ is denoted $Z(G)$, the maximal $\Q$-split torus in
the center of $\Res_{F/\Q} G$ is $A_G$, and $A_{G,\infty}:=A_G(\R)^0$. Write $G_\infty$ for $\Res_{F_\infty/\R} (G\times_F F_\infty)$.

   Let $\cH(G(\A_F^{S}))=C^\infty_c(G(\A_F^{S}))$
    denote the space of locally constant compactly supported $\C$-valued functions on $G(\A_F^{S})$. Similarly $\cH(G(F_S))$ is defined. The unitary dual of $G(F_S)$ is denoted $G(F_S)^{\wedge}$. Its Plancherel measure is written as $\pl_S$. We typically write $\phi_S$ for an element of $\cH(G(F_S))$ and $\hat \phi_S$ for the associated function $\pi_S\mapsto \tr \pi_S(\phi_S)$ on $G(F_S)^\wedge$. %

  When $\pi$ is an admissible representation of a $p$-adic group $G$, write $\Theta_\pi$ for its Harish-Chandra character.
We write $[g,h]:=ghg^{-1}h^{-1}$ for $g,h\in G$.

\subsection{Acknowledgment}
We thank late Paul Sally for helpful discussions. We are grateful to the referees for their numerous corrections and suggestions.
S.W.S. is partially supported by NSF grant DMS-1449558/1501882 and a Sloan Fellowship. N.T. is partially supported by NSF grant DMS-1200684/1454893.

\section{Yu's construction of supercuspidal representations}\label{s:Yu-construction}

In this section we review the construction of supercuspidal representations of a $p$-adic reductive group from the so-called generic
data due to Jiu-Kang Yu and recall from~\cite{Kim07} that his construction
exhausts all supercuspidal representations provided the residue characteristic of the
base field is sufficiently large. The construction yields a supercuspidal representation
concretely as a compactly induced representation, and this will be an important input in
the next section.

\subsection{Notation and definitions}\label{sub:notation Bd}

  The following local notation will be in use until Section~\ref{s:proof}.
  Let $p$ be a prime. Let $k$ be a finite extension of $\Q_p$. Denote by $q$ the cardinality of the residue field of $k$.
  Let $\bG$ be a connected reductive group over $k$, whose Lie algebra is denoted
  $\blieG$. Denote the center of $\bG$ by $\mathbf Z_{\bG}$. Write $G$ and $\lieG$ for
  $\bG(k)$ and $\blieG(k)$, respectively.
For a tamely ramified extension $E$ of $k$,
denote by $\Bd(\bG,E)$ the extended building of $\bG$ over $E$.
If $\bT$ is a maximal $E$-split $k$-torus, let
$\Apt(\bT,\bG,E)$ denote the apartment associated to $\bT$ in $\Bd(\bG,E)$.
It is known that for any tamely ramified
 Galois extension $E'$ of $E$,
$\Apt(\bT,\bG,E)$ can be identified with
the set of all $\Gal(E'/E)$-fixed points in
$\Apt(\bT,\bG,E')$. Likewise, $\Bd(\bG,E)$ can be embedded into $\Bd(\bG,E')$
and its image is equal to the set of the Galois fixed points in $\Bd(\bG,E')$
\cite{Rou77, Pra01}.

For $(x,r)\in\Bd(\bG,E)\times\bbR$, there is
a filtration lattice $\blieG(E)_{x,r}$ and a subgroup $\bG(E)_{x,r}$ if
$\rtm\ge0$ defined by Moy--Prasad~\cite{MP94}.
We shall normalize the valuation of $E$ to extend the valuation of $k$, which is
a different convention than in~\cite{MP94} (where the valued group is normalized
to be $\Z$).
Our convention is so that
for a tamely ramified Galois extension $E'$ of $E$
and $x\in\Bd(\bG,E)\subset\Bd(\bG,E')$, we have \cite[Prop. 1.4.1]{Adl98}:
\[
\blieG(E)_{x,r}=\blieG(E')_{x,r}\cap\blieG(E).
\]
If $r>0$, we also have
\[
\bG(E)_{x,r}=\bG(E')_{x,r}\cap\bG(E).
\]
For simplicity, we put $\lieG_{x,\rtm}:=\blieG(k)_{x,\rtm}$, etc,
and $\Bd(G):=\Bd(\bG,k)$.
For $\rtm\in\bbR$ and $x\in\Bd(G)$ we will also use the following notation:
\begin{itemize}
\item
$\lieG_{x,r^+}:=\cup_{s>r} \lieG_{x,s}$, and if $r\ge 0$,
$G_{x,r^+}:=\cup_{s>r} G_{x,s}$.
\item
$\lieG^\ast_{x,r}:=\left\{\chi\in\lieG^\ast
\mid\chi(\lieG_{x,(-r)^+})\subset\pid_k\right\}$, where $\pid_k$ is the maximal ideal of the ring of integers of $k$.
\item
$\lieG_r:=\cup_{y\in\Bd(G)} \lieG_{y,r}$ and
$\lieG_{r^+}:=\cup_{s>r} \lieG_s$
\item
$G_r:=\cup_{y\in\Bd(G)} G_{y,r}$ and
$G_{r^+}:=\cup_{s>r} G_s$ for $r\ge0$.
\end{itemize}
Lastly, for $x\in\Bd(G)$, we denote the stabilizer of $x$ in $G$ by $G_{[x]}$.

\subsection{Generic $G$-datum}

Yu's construction of supercuspidal representations starts with a {\it generic $G$-datum}, which we recall. The reader is referred to \cite{Yu01} for further details and any notions undefined here.

\begin{defn}\label{defn: generic G-datum}\rm
A {\it generic $G$-datum} is a quintuple
$\datum=(\vec\bG,\xo,\vec\rtm,\vec\phi,\rho)$ satisfying the following:

\item{${\mathbf{D}}1.$}
$\vec{\bG}=(\bG^0,\bG^1,\cdots,\bG^d=\bG)$ is a tamely ramified twisted
Levi sequence such that $\bZ_{\bG^0}/\bZ_{\bG}$ is anisotropic.

\midvsp

\item{${\mathbf{D}}2.$}
$\xo\in \Bd(G^0) = \Bd(\bG^0,k)$.

\midvsp

\item{${\mathbf{D}}3.$}
$\vec\rtm=(\rtm_0,\rtm_1,\cdots,\rtm_{d-1},\rtm_d)$ is
a sequence of positive real numbers
with $0<\rtm_0<\cdots<\rtm_{d-2}< \rtm_{d-1}\le\rtm_d$ if $d>0$,
$0\le\rtm_0$ if $d=0$.

\midvsp

\item{${\mathbf{D}}4.$}
$\vec\phi=(\phi_0,\cdots,\phi_d)$ is a sequence of quasi-characters,
where $\phi_i$ is a generic quasi-character of $G^i$ (see \cite[\S9]{Yu01} for the definition
of generic quasi-characters). When $d=0$, $\phi_0$ is trivial on $G_{\xo,\rtm_0^+}$, but, nontrivial on $G_{x,\rtm_0}$. When $d\ge1$,
$\phi_i$ is trivial on $G^i_{\xo,\rtm_i^+}$, but
non-trivial on $G^i_{\xo,\rtm_i}$ for $0\le i\le d-1$.
If $\rtm_{d-1}<\rtm_d$,
$\phi_d$ is nontrivial on $G^d_{\xo,\rtm_d}$ and trivial on
$G^d_{\xo,\rtm{}^+_d}$; otherwise, $\phi_d=1$.

\midvsp

\item{${\mathbf{D}}5.$}
$\rho$ is an irreducible representation of $G^0_{[\xo]}$,
the stabilizer in $G^0$ of the image $[\xo]$ of $\xo$
in the reduced building
of $\bG^0$,
such that $\rho|G^0_{\xo,0^+}$ is isotrivial
and $c\textrm{-Ind}_{G^0_{[\xo]}}^{G^0}\rho$ is irreducible and supercuspidal.
\end{defn}

In $\mathbf{D}5$, note that $G^0_x$ is compact while $G^0_{[x]}$ is only compact mod center.
Recall from \cite[p.585]{Yu01} that there is a canonical sequence of embeddings
\[
\Bd(\bG^0,E)\hookrightarrow\Bd(\bG^1,E)\hookrightarrow
\cdots\hookrightarrow\Bd(\bG^d,E).\]
Hence, $\xo$ can be regarded as a point of each of $\Bd(G^i)=\Bd(\bG^i,k)$.

Also $\mathbf{D}5$ implies that $\xo$ is rational as a building point of $\Bd(G)$ because
it is a vertex of $\Bd(G^0)$. This will become important in Hypothesis $\scrE$.(ii) below.

Given a generic $G$-datum $\datum=(\vec\bG,\xo,\vec\rtm,\vec\phi,\rho)$, we introduce an
open compact-mod-center subgroup of $G$
$$J_\Sigma:= G^0_{[\xo]}G^1_{\xo,s_0}\cdots G^{d-1}_{\xo,s_{d-2}}G^{d}_{\xo,s_{d-1}},$$
where we set $s_i:=r_i/2$ for each $i$.
Yu constructs a finite dimensional representation $\rho_\Sigma$ of $J_\Sigma$ from the datum. His key result is that

\begin{thm} [Yu]\label{t:Yu}
$\pi_{\datum}=c\textrm{-}\mathrm{Ind}_{J_\Sigma}^G\rho_{\datum}$
is irreducible and thus supercuspidal.
\end{thm}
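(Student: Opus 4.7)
The plan is to verify the standard Mackey criterion for irreducibility of compact induction. Hypothesis D1 forces $\bZ_{\bG^0}/\bZ_{\bG}$ to be anisotropic, so $J_\Sigma$ is compact modulo $\bZ_{\bG}(k)$. Under this condition, if $\rho_\Sigma$ is irreducible and the intertwining set
\[
I_G(\rho_\Sigma) := \{\, g\in G \,:\, \Hom_{J_\Sigma\cap gJ_\Sigma g^{\inv}}(\rho_\Sigma,{}^g\rho_\Sigma)\neq 0 \,\}
\]
is contained in $J_\Sigma$, then $\cind_{J_\Sigma}^G\rho_\Sigma$ is irreducible, and supercuspidality follows automatically because the matrix coefficients are supported on $J_\Sigma$, which is compact modulo center.

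First I would establish irreducibility of $\rho_\Sigma$. By Yu's construction $\rho_\Sigma$ is built as an iterated tensor product $\rho\otimes\tilde\phi_0\otimes\cdots\otimes\tilde\phi_{d-1}\otimes(\phi_d|_{J_\Sigma})$, where each $\tilde\phi_i$ is an extension to $J_\Sigma$ of a Heisenberg--Weil representation attached to $\phi_i$ on the Moy--Prasad subquotient at level $r_i$. Irreducibility of $\rho$ from D5, combined with the Stone--von Neumann theorem applied to each Heisenberg piece and the fact that the pieces commute on the overlapping subgroups, then yields irreducibility of $\rho_\Sigma$.

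The heart of the matter is the inclusion $I_G(\rho_\Sigma)\subset J_\Sigma$. I would prove it by induction on the length $d$ of the twisted Levi sequence, the base case $d=0$ being exactly D5. For the inductive step, the key input is the generic intertwining lemma for Heisenberg--Weil representations: genericity of $\phi_i$ in the sense of D4 and \cite[\S9]{Yu01} ensures that the commutator pairing induced by $\phi_i$ descends to a symplectic form on the Moy--Prasad subquotient $\lieG^{i+1}_{x,s_i}/\lieG^{i+1}_{x,r_i^+}$ which is non-degenerate on the complement of the analogous quotient for $\bG^i$. By the Stone--von Neumann theorem and the standard computation of intertwining for Heisenberg--Weil representations, any $g$ intertwining $\tilde\phi_i$ and fixing the lower-stage data must preserve this symplectic datum, which forces $g\in G^i\cdot G^{i+1}_{x,s_i}$. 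Iterating from the top of the tower down to the base case D5 then yields $I_G(\rho_\Sigma)\subset G^0_{[x]}\cdot G^1_{x,s_0}\cdots G^d_{x,s_{d-1}}=J_\Sigma$.

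The principal obstacle is the non-degeneracy statement at each stage. Its verification demands a careful tracking of how the root subspaces of $\bG^{i+1}$ relative to a maximal torus through $x$ interact with the Moy--Prasad filtration of the twisted Levi $\bG^i$, and is precisely where the genericity hypothesis D4 is indispensable; this is the technical core of \cite{Yu01}. Once it is in place, irreducibility and supercuspidality of $\pi_\Sigma$ both drop out of the compact-mod-center induction formalism.
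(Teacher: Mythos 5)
The paper does not prove Theorem~\ref{t:Yu}; it simply recalls the statement from \cite{Yu01} and moves on, so there is no internal proof to compare your sketch against. Judged as a reconstruction of Yu's actual argument, your outline is faithful at the strategic level: Yu's proof does proceed by verifying the Bushnell--Kutzko-type irreducibility criterion for compact induction from an open compact-mod-center subgroup, it does build $\rho_\Sigma$ as a tensor product of the depth-zero piece $\rho$ with Heisenberg--Weil factors attached to the $\phi_i$, and the intertwining containment $I_G(\rho_\Sigma)\subset J_\Sigma$ is indeed reduced by induction on $d$ to the non-degeneracy of symplectic pairings on Moy--Prasad subquotients, which is where the genericity hypothesis $\mathbf{D}4$ enters, with the base case supplied by $\mathbf{D}5$. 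Two caveats are worth flagging. First, the step you describe in one sentence --- ``any $g$ intertwining $\tilde\phi_i$ and fixing the lower-stage data must preserve this symplectic datum'' --- is in fact the bulk of \cite{Yu01}: it requires the auxiliary conditions Yu calls (SC1)--(SC3), a ``special isomorphism'' to identify the relevant Heisenberg quotients, and a filtration-by-filtration analysis that does not follow just from Stone--von Neumann. Second, it was discovered after \cite{Yu01} appeared that a result of G\'erardin that Yu invokes in precisely this intertwining analysis is stated incorrectly, which introduced a gap in the original proof; the theorem is nonetheless true, and the gap has since been repaired (work of Fintzen). So your proposal identifies the right architecture, but a complete proof would need to fill in the full intertwining computation rather than appeal to it as a black box, and would need to track the corrected form of the Heisenberg--Weil input.
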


Fix a positive Haar measure $\vol_G$ on $G$ and denote the formal degree of $\pi_\Sigma$ by $\deg(\pi_\Sigma)$.
It is well-known (see for example \cite{CMS90} or  \cite[Lem
2.9.(i)]{KST-localconstancy}) that we have
\begin{equation}\label{bound-on-d}
\deg(\pi_\Sigma)=\dfrac{\dim(\rho_\Sigma)}{\vol_{G/Z}(J_\Sigma/Z)}.
\end{equation}
The construction of $\rho_\Sigma$ is complicated, but in what follows we shall only
need the inequality $\dim(\rho_\Sigma)\le q^{\dim G}$, see~\cite[Lem
2.9.(ii)]{KST-localconstancy} for a proof.
 For later reference, we write $s_\Sigma:=s_{d-1}$.

\subsection{Supercuspidal representations via compact induction}\label{sub:sc-cpt-ind}

Denote by $\Irr(G)$ the set of (isomorphism classes of) irreducible smooth representations of $G$. Write $\Irr^2(G)$ (resp. $\Irr^\scusp(G)$) for the subset of square-integrable (resp. supercuspidal) members.
  Define $\Irr^{\Yu}(G)\subset \Irr^\scusp(G)$ to be the subset of all supercuspidal representations which are of the form $\pi_\Sigma$ as above. Write $\Irr^{\cind}(G)\subset \Irr^\scusp(G)$ for the set of $\pi$ compactly induced from a representation on an open compact-mod-center subgroup of $G$. We have that
  $$\Irr^{\Yu}(G) \subset \Irr^{\cind}(G) \subset \Irr^\scusp(G),$$
  where the first inclusion comes from Theorem \ref{t:Yu}. The second inclusion is expected to be an equality but not known in general; see \cite[\S2.6]{KST-localconstancy} for references to partial results by Bushnell, Kutzko, Stevens and others in this direction.
  The main result of \cite{Kim07} says that the above inclusions are equalities under a
  rather explicit set of four hypotheses (namely (H$k$), (HB), (HGT), and
  (H$\mathcal{N}$) in \cite[\S3.4]{Kim07}); in particular the equalities hold when $p$ is
  greater than some lower bound depending only on the absolute root datum of $G$ and the
  absolute ramification index of $k$.\footnote{While this paper was under review, Fintzen \cite{Fin18b} announced the proof that $\Irr^{\Yu}(G)= \Irr^\scusp(G)$ only assuming that $p$ does not divide the order of the Weyl group of $G$.}

\section{Orbital integrals of pseudo-coefficients}\label{s:orbital-integrals}

We keep the notation
  from the last
  section and assume that $G=\bG(k)$ has \emph{compact} center throughout this section
  and the next section. (We will briefly explain how to carry over the results of the
  current section to the non compact center case in Remark \ref{r:non-cpt-center} below.)
  For $\pi\in \Irr^{\Yu}(G)$
  attached to a generic $G$-datum we will construct an explicit coefficient $f_\pi$ of
  $\pi$ and study the asymptotic behavior of the orbital integral of $f_\pi$ on
  noncentral semisimple elements as $\deg(\pi)\ra\infty$ (note that we use $\pi$ instead
  of $\sigma$ to denote a representation of $G$). The result admits an interpretation as
  an asymptotic formula for character values, cf. \S\ref{s:asymptotic-behavior-sc} below,
  and will be applied in \S\ref{s:plancherel} to obtain an equidistribution theorem for
  families of automorphic representations.

\subsection{Pseudo-coefficients}

As before we have $G=\bG(k)$ and write $Z$ for the center of $G$.
Let us recall the definition of pseudo-coefficients, cf. \cite[A.4]{DKV84}.  %
\begin{defn}\label{defn: pc}
  Let $\pi\in \Irr^2(G)$. A function $f_\pi\in \cH(G)$ is said to be a
  \textbf{pseudo-coefficient} of $\pi$ if $\tr \pi' (f_\pi)=\delta_{\pi,\pi'}$ for every
  tempered $\pi'\in \Irr(G)$.%
\end{defn}

The existence of $f_\pi$ follows from the trace Paley-Wiener theorem, cf. \cite[Prop
1]{Clo86}. (For real groups this is due to Clozel and Delorme \cite[Cor, p.213]{CD90}.)
 To make $f_\pi$ explicit, one can employ Bruhat-Tits buildings as in \cite[\S
 III.4]{SS97} for any $\pi\in \Irr^2(G)$ or proceed as in Lemma \ref{l:sc-coeff} below
 for $\pi\in \Irr^{\cind}(G)$.

Although $f_\pi$ is not unique, the orbital integrals of $f_\pi$ and the trace values of
$f_\pi$ against irreducible admissible representations of $G$ are uniquely determined by
the condition of Definition \ref{defn: pc}. So,  by \cite[Thm. 0]{Kaz86}, its orbital
integrals are
uniquely determined. Note that $f_\pi$ is a cuspidal function in the sense that the trace
of every induced representation from a proper parabolic subgroup is zero against $f_\pi$.
(This fact is built into the construction of \cite[Prop 1]{Clo86}.)
Moreover the orbital integrals of $f_\pi$ are well known to encode the elliptic character
values of $\pi$ (recall that $\gamma\in G$ is said to be elliptic if the centralizer
$Z_G(\gamma)$
is compact). When $\gamma$ is regular elliptic, we will use the Haar measure on the
compact group $G_\gamma$ assigning total volume 1 in the definition of the orbital
integral below.

\begin{prop}\label{p:char-orb-int} If $\gamma\in G$ is regular semisimple, we have
  \beq\label{e:char-orb-int}
   \orbi_\gamma(f_\pi)=\left\{\begin{array}{cl}
     \Theta_{\pi^\vee}(\gamma), & ~\gamma:~\mbox{elliptic},\\
     0,& ~\gamma:~\mbox{non-elliptic}.
   \end{array}   \right.
\eeq
  Moreover $\orbi_\gamma(f_\pi)=0$ for every $\gamma$ that is (non-regular) non-elliptic
  semisimple.
\end{prop}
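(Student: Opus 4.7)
My plan is to derive both parts from a single fact already recorded in the text: $f_\pi$ is \emph{cuspidal}, i.e., $\tr \Ind_P^G(\tau)(f_\pi) = 0$ for every proper parabolic $P = MN$ and every admissible $\tau$ on $M$, equivalently (by Casselman's theorem relating traces of parabolic inductions to their Jacquet modules) $f_\pi^{(P)} = 0$ in $\cH(M)$ for all proper $P$.

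For $\gamma$ regular semisimple I would invoke Kazhdan's theorem on pseudo-coefficients: the defining property $\tr \pi'(f_\pi) = \delta_{\pi,\pi'}$ on tempered $\pi'$, combined with Harish-Chandra's orthogonality of discrete-series characters on the regular elliptic locus and the Weyl integration formula, pins down $O_\gamma(f_\pi)$ uniquely to be $\Theta_{\pi^\vee}(\gamma)$ on regular elliptic $\gamma$ and $0$ on regular non-elliptic $\gamma$. The standard references are \cite[App.]{DKV84} and \cite[Thm.~0]{Kaz86} (the latter already cited in the text). The regular non-elliptic vanishing also follows directly from cuspidality via the standard parabolic descent
\[
|D_G(\gamma)|^{1/2} O_\gamma^G(f) \;=\; |D_M(\gamma)|^{1/2} O_\gamma^M\bigl(f^{(P)}\bigr),
\]
applied with $M = Z_G(A_\gamma)$, which is a proper Levi since $\gamma$ is not elliptic.

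For $\gamma$ non-regular non-elliptic semisimple I would extend the descent. Set $A := A_{Z_G(\gamma)^\circ}$, the maximal $k$-split torus in the center of the connected centralizer of $\gamma$. Non-ellipticity means $A \not\subset Z(G)$, so $M := Z_G(A)$ is a proper Levi containing $\gamma$, and $Z_G(\gamma)^\circ \subset M$ because $A$ is central in $Z_G(\gamma)^\circ$. Harish-Chandra's descent formula for orbital integrals at semisimple elements whose connected centralizer lies in $M$ then yields
\[
O_\gamma^G(f_\pi) \;=\; \delta_P(\gamma)^{1/2}\, O_\gamma^M\bigl(f_\pi^{(P)}\bigr) \;=\; 0.
\]

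The main technical obstacle I anticipate is justifying the descent identity in the non-regular case: one must match the Haar measure on $Z_G(\gamma)^\circ = Z_M(\gamma)^\circ$ on both sides and verify that no correction beyond $\delta_P(\gamma)^{1/2}$ appears. These checks hinge on the containment $Z_G(\gamma)^\circ \subset M$ secured above and are otherwise routine. Once they are in place, both non-elliptic sub-cases are treated uniformly and the proposition follows.
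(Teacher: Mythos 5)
Your proposal is correct in substance, but it takes a route that differs from the paper's, especially for the non-elliptic vanishing. The paper simply cites Arthur's local trace formula (\cite[Thm~5.1]{Art93} with $M=G$) for the regular semisimple case, and Schneider--Stuhler's building-theoretic Lemma III.4.19 of \cite{SS97} for the non-regular non-elliptic case. You instead derive both non-elliptic sub-cases uniformly from cuspidality via Harish-Chandra's semisimple descent $O^G_\gamma(f_\pi) = O^M_\gamma(f_\pi^{(P)})$ (appropriately normalized), and you derive the regular case from orthogonality relations and Weyl integration rather than from the local trace formula directly. Both routes are standard and valid, and your unification of the two non-elliptic cases is a genuine simplification of the bookkeeping.

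One imprecision worth flagging. You assert that cuspidality of $f_\pi$ in the spectral sense (vanishing traces against all properly induced representations) is \emph{equivalent} to $f_\pi^{(P)} = 0$ in $\cH(M)$. This equivalence does not hold for an arbitrary pseudo-coefficient: vanishing of all traces only controls $f_\pi^{(P)}$ up to functions with trivial orbital integrals, not pointwise. What saves the argument is twofold. First, by Kazhdan's theorem \cite[Thm~0]{Kaz86} (already used in the paper), $O_\gamma(f_\pi)$ depends only on $\pi$ and not on the choice of pseudo-coefficient, so it suffices to prove the proposition for one convenient choice. Second, when $f_\pi$ is taken to be the explicit matrix coefficient of Lemma~\ref{l:sc-coeff}, supercuspidality of $\pi$ gives $\int_N f_\pi(g_1 n g_2)\,dn = 0$ for every $g_1,g_2$ and every unipotent radical $N$, hence $f_\pi^{(P)} = 0$ \emph{literally}. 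With that substitution the descent step is clean. Otherwise you would need to go back through orbital integrals of $f_\pi^{(P)}$ and a Shalika-germ or limit argument to handle singular $\gamma$, which is exactly the sort of analysis that \cite[Lem III.4.19]{SS97} packages.

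With that adjustment, the proposal is a valid alternative proof and is arguably more self-contained than the paper's pair of citations.
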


\begin{proof}
  The first assertion can be derived from \cite[Thm 5.1]{Art93} specialized to the $M=G$
  case. %
 The last assertion is Lemma III.4.19 of \cite{SS97} (noting that the Euler-Poincar\'e
 function in that lemma is a pseudo-coefficient in view of Proposition III.4.1 and
 Theorem III.4.6 of \textit{loc. cit.}).

\end{proof}

\subsection{Explicit supercuspidal coefficients}\label{sub:explicit-coeff}

In the following lemma, we construct an explicit matrix coefficient (which is also a
pseudo-coefficient) associated to a compactly induced supercuspidal representation.

\begin{lem}\label{l:sc-coeff} Let $\rho$ be a finite dimensional admissible
representation of an open compact subgroup $J$ of $G$. Suppose $\pi:=\cind_J^G\rho$ is
irreducible (thus supercuspidal). %
Let
\[
f_\pi(g)=\begin{cases}\frac1{\vol(J)}\Theta_{\rho^\vee}(g) &\textrm{if }g\in J\\
0&\textrm{otherwise}\end{cases}
\]
where $\Theta_{\rho^\vee}$ is the character of $\rho^\vee$. Then, we have
\begin{enumerate}
\item[(i)]
$\tr \pi\rq{} (f_{\pi})=\delta_{\pi,\pi\rq{}}$ for every $\pi'\in \Irr(G)$,
\item [(ii)]
$f_\pi(1)=\frac{\dim(\rho^\vee)}{\vol(J)}=\frac{\dim(\rho)}{\vol(J)}=\deg(\pi)$,
\item[(iii)]
$\supp(f_\pi)\subset J$.
\end{enumerate}
In particular, $f_\pi$ is a pseudo-coefficient and $\dfrac{|f_\pi|}{\deg(\pi)}\le  1_J$,
the characteristic function of $J$.
\end{lem}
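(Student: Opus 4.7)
The plan is to reduce part (i) to two classical ingredients---Schur orthogonality on the compact group $J$ and Frobenius reciprocity for $\cind_J^G$---and then to read off (ii), (iii), and the dominating bound directly from the definition.

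I would begin with two harmless reductions. Since $\cind_J^G$ commutes with direct sums and $\pi=\cind_J^G\rho$ is irreducible, $\rho$ must itself be irreducible; and since $J$ is compact, $\rho$ is unitarizable, so I may assume $\rho$ is a unitary irreducible representation of $J$. Under this normalization $\Theta_{\rho^\vee}(g)=\overline{\Theta_\rho(g)}$ on $J$, which casts $f_\pi$ in the shape of the familiar Peter--Weyl projector.

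For (i), fix $\pi'\in\Irr(G)$. Because $\pi'$ is admissible and $f_\pi$ is locally constant with support in the compact set $J$, the operator $\pi'(f_\pi)$ has finite-dimensional image and a well-defined trace. Schur orthogonality applied to the smooth $J$-module $\pi'|_J$ identifies
\[
\pi'(f_\pi)=\frac{1}{\vol(J)}\int_J \Theta_{\rho^\vee}(g)\,\pi'(g)\,dg=\frac{1}{\dim\rho}\,P_\rho,
\]
where $P_\rho$ denotes the projection of $\pi'|_J$ onto its $\rho$-isotypic component. Taking the trace gives $\tr\pi'(f_\pi)=\dim\Hom_J(\rho,\pi'|_J)$, which by Frobenius reciprocity for compact induction equals $\dim\Hom_G(\pi,\pi')$; Schur's lemma on the pair of irreducibles $\pi,\pi'$ then delivers $\delta_{\pi,\pi'}$. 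In passing one observes that this is genuinely stronger than the pseudo-coefficient property, which only requires the equality for tempered $\pi'$.

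The remaining assertions are bookkeeping. For (ii), evaluating at $g=1$ gives $f_\pi(1)=\Theta_{\rho^\vee}(1)/\vol(J)=\dim(\rho)/\vol(J)$, and the identification with $\deg(\pi)$ is the standard formal-degree formula for a compactly induced supercuspidal recorded at \eqref{bound-on-d}. The support statement (iii) is immediate from the definition of $f_\pi$. Finally, because $\rho$ is unitary we have $|\Theta_{\rho^\vee}(g)|\le\dim\rho$ for every $g\in J$, hence $|f_\pi|/\deg(\pi)\le \triv_J$. There is no real obstacle; the one point requiring a little care is that $\pi'$ in (i) is not assumed unitary as a representation of $G$, but the Schur/Frobenius argument lives entirely on the restriction to the compact subgroup $J$ and on a Hom-space calculation, both insensitive to that.
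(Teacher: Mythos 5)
Your proof is correct and follows essentially the same route as the paper: the key input in both is Frobenius reciprocity for compact induction, $\Hom_J(\rho,\pi'|_J)\cong\Hom_G(\cind_J^G\rho,\pi')$. You spell out the Schur-orthogonality computation on the compact group $J$ (giving $\tr\pi'(f_\pi)=\dim\Hom_J(\rho,\pi'|_J)$) that the paper compresses into the remark that $f_\pi$ is a matrix coefficient of $\pi^\vee$, and you read off (ii), (iii), and the bound in the same way.
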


\begin{proof}
Assertion (iii) is immediate, and (ii) is~\cite[Lem~2.9]{KST-localconstancy}. Assertion
(i) follows from the fact that $f_\pi$ is a matrix coefficient of $\pi^\vee$ and from
Frobenius reciprocity which implies $\Hom_J(\rho,\pi\rq{})=\delta_{\pi,\pi\rq{}}$.
\end{proof}

We will need the following hypotheses on the group $\bG$ for Theorem
\ref{t:asymptotic-orb-int}.
Hypothesis $\scrT$ will be used in Lemma~\ref{lem: good product}.
Hypothesis $\scrE$ will be used in the proof of Proposition~\ref{lem: decreasing} and
Lemma~\ref{lem: normalize}.
Observe that the hypotheses are inherited by tame twisted Levi subgroups.

\subsection{Hypotheses}\label{hypotheses}
\begin{itemize}
\item[$\scrT$]
For any tame maximal torus $\bT$, and $r>0$, every nontrivial coset in $T_r/T_{r^+}$
contains a $\bG$-good element.
\item[$\scrE$]
There is a tamely ramified extension $E$ of $k$ such that
\begin{itemize}
\item[(i)]
every $k$-torus in $\bG$ splits over $E$;
\item[(ii)]
the $k$-\emph{order}, in the sense of~\cite[\S3.3]{RY14}, of every
building point $\xo\in \Bd(G)$, which is a vertex in the building of some tamely ramified
twisted
Levi subgroup of $\bG$
divides the ramification index of $E$ over $k$;
\item[(iii)]
for every $r\ge1$, the exponential map induces a homeomorphism
\[
\exp:\blieG(E)_{x,r}\rightarrow
\bG(E)_{x,r},
\]
and an abelian group homomorphism
$\bG(E)_{x,r}\left/\bG(E)_{x,r^+}\right.\simeq
\blieG(E)_{x,r}\left/\blieG(E)_{x,r^+}\right.$.
\end{itemize}
\end{itemize}

We recall the notion~\cite{AS08} of $\bG$-\emph{good} elements (we shall simply write
{\it good} when there is no risk of confusion). Define the
\emph{depth} $\dth_\bT(\gamma)$ of a compact element $\gamma\in T_0$  as the
unique $r\in \R_{\ge0}$ such that $\gamma\in T_r\setminus T_{r^+}$.
A compact element $\gamma\in T_0$ of positive depth $r>0$ is $\bG$-good  if
for each $\alpha\in\Phi$, either $\alpha(\gamma)=1$ or
$\val(\alpha(\gamma)-1)=r$. A compact semisimple element $\gamma$ of depth $0$ is
$\bG$-good if it is
absolutely semisimple.

Hypothesis $\scrE$ may appear complicated at first. In fact, it achieves
different purposes simultaneously:
\begin{itemize}
\item If the tame extension $E$ of $k$ satisfies  $\scrE$.(i), (resp. $\scrE$.(ii)), and
$E'/E$ is
a larger tame extension, then $E'$ also satisfies $\scrE$.(i), (resp. $\scrE$.(ii)),
because the ramification index of $E'$ over $k$ is divisible by the ramification index of
$E$ over $k$.
\item On the other hand, the radius of convergence of the exponential map in
$\scrE$.(iii) decreases with the
ramification index. In particular,
Hypothesis $\scrE$.(iii) may be satisfied for the extension $E$ of $k$, and
fail for larger extensions $E'/E$.
\end{itemize}

To show that Hypotheses $\scrT$ and $\scrE$ hold true for sufficiently large $p$, we begin with a few general group-theoretic lemmas.

\begin{lem}\label{l:split}
  Fix a root datum $\mathcal R$. Then there exists a constant $d_{\mathcal R} >0$
  such that for every field $K_0$ and every connected
  reductive
  group $\bG$ over $K_0$ with absolute root datum $\mathcal R$, the group $\bG$ splits over
  an
  extension of $K_0$ with degree at most $d_{\mathcal R}$.
  In fact every maximal torus of $\bG$ defined over $K_0$ splits over an extension with degree at most $d_{\mathcal R}$.
\end{lem}

\begin{proof}
Let $\bT$ be a maximal torus of $\bG$ defined over $K_0$. Then $\bT$ splits over a finite separable extension $K/K_0$
such that $\Gal(K/K_0)$ acts faithfully on $X^*(\bT)$. Choose a $\Z$-basis for $X^*(\bT)$ to identify $X^*(\bT)\simeq \Z^r$
so as to obtain a group embedding
$$\Gal(K/K_0)\hra \GL(r,\Z).$$
By Minkowski's
lemma (see e.g.~\cite{Serre:bounds-finite}), $\GL(r,\Z)$ has only finitely many finite
subgroups up to conjugacy (for a fixed $r$), hence $[K:K_0]$ admits a bound $d_r\in
\Z_{>0}$
only in terms of $r$. The lemma is proved with $d_{\mathcal R}:=d_r$, as $r$ depends only on
the root datum $\mathcal R$.
 \end{proof}

When $K_0$ is a local field, we have an explicit and relatively tight bound on $d_{\mathcal R}$ assuming that the residual characteristic is not too small.

\begin{lem}\label{l:tame-split}
  Fix a root datum $\mathcal R$.
  Let  $M_{\mathcal R}$ be the largest integer $m$ such that $\varphi(m)\le
  \rank(\mathcal R)$, and $p_{\mathcal R}:=\rank(\mathcal R)$.
  Then for every local field $K_0$ of residual characteristic
  $p>p_{\mathcal R}$,
  every $K_0$-torus $\bT$ in a connected
  reductive
  group $\bG$ over $K_0$ with absolute root datum $\mathcal R$, splits over a tamely
  ramified extension of $K_0$ of degree $\le M_{\mathcal R}^2$.
\end{lem}

\begin{proof}
We will deduce this from the argument for Lemma~\ref{l:split}.
In the proof of that lemma, retaining the same notation, it suffices that the order of the finite subgroup
$\Gal(K/K_0)\hookrightarrow \GL(r,\Z)$ be coprime to $p$ for $K/K_0$ to be tame.
By Minkowski's lemma~\cite{Serre:bounds-finite}, this holds if $p>r+1$.
Moreover, the exponent $m$ of every finite subgroup of $\GL(r,\Z)$ satisfies
$\varphi(m)\le
r$, because the eigenvalues of an element of order $m$ inside $\GL(r,\Z)$ consist of all
$m$-th roots of unity (see~\cite[App.~B, claim (1) p.256]{Wald:tordue-asterisque} for a
similar argument). So $K/K_0$ is a finite tamely ramified extension of exponent $m$ with
$m\le M_{\mathcal R}$.
If $K'$ denotes the maximal unramified subextension of $K/K_0$ then $K/K'$ and $K'/K_0$ are cyclic so $[K:K']$ and $[K':K_0]$ divide $m$.
Therefore $[K:K_0]\le m^2$.
\end{proof}

\begin{lem}\label{l:k-embed}
  Fix a root datum $\mathcal R$. Then there exists a constant $N_{\mathcal R} >0$
  such that for every field $K_0$ of characteristic zero and every connected reductive
  group $\bG$ over $K_0$ with absolute root datum $\mathcal R$, there exists a
  $K_0$-embedding
  $\bG\hra GL(n)$ with $n\le N_{\mathcal R}$.
\end{lem}

\begin{proof}
  Denote by $\bG_0$ the Chevalley group over $k_0:=\Q$ determined by $\mathcal R$.
  Fix a $k_0$-embedding $\bG_0\hra \GL_{k_0}(V_0)$ with $V_0$ a finite dimensional
  space over $k_0$ once and for all. We will prove the lemma with $N_{\mathcal R}:=d_{\mathcal R}\dim_{{k_0}}V_0$
  with $d_{\mathcal R}$ as in Lemma \ref{l:split}.

  Let $\bG$ and $K_0$ be as in the current lemma.
  By Lemma \ref{l:split}, there exists $K/K_0$ with $[K:K_0]\le d_{\mathcal R}$ such that
  $\bG\otimes_{K_0} K$ is a split group, so that $\bG_0\otimes_{k_0} K\simeq \bG\otimes_{K_0} K$.
  By base change, we have a $K$-embedding $\bG_0\otimes_{k_0} K\hra
  \GL_K(V_0\otimes_{k_0} K)$.
  Let $\mathrm{Res}_{K/K_0}$ denote the Weil restriction of scalars with respect to $K/K_0$.
  Using the obvious embedding $\bG\hra \Res_{K/K_0}(\bG\otimes_{K_0} K)$, we obtain a chain of
  $K_0$-embeddings
  $$\bG\hra \Res_{K/K_0}(\bG\otimes_{K_0} K)\simeq \Res_{K/K_0}(\bG_0\otimes_{k_0} K)\hra
  \Res_{K/K_0}\GL_K(V_0\otimes_{k_0} K)\hra \GL_k (V_0\otimes_{k_0} K).$$
  The last embedding follows from viewing $V_0\otimes_{k_0} K$ as a $K_0$-vector space. Since
  $\dim_{K_0} (V_0\otimes_{k_0} K) = [K:K_0]\dim_{{k_0}}V_0 \le d_{\mathcal R}\dim_{{k_0}}V_0=N_{\mathcal R}$, the
  proof is complete.
\end{proof}

\begin{remark} For certain kinds of groups $\bG$, one can
proceed more directly as follows:
\begin{itemize}
\item If $\bG$ is semisimple, then we have
$\operatorname{Aut}(\bG)\simeq \operatorname{Inn}(\bG)\rtimes
\operatorname{Aut}({\mathcal R})$, and the action of
$\Gal(\overline{K_0}/K_0) $ on $\bT$ factors through $W\rtimes \operatorname{Aut}({\mathcal
R})$. Thus it is sufficient that $p_{\mathcal R}$ be larger than every prime factor of
the Weyl group $W$ and the automorphism group $\operatorname{Aut}({\mathcal R})$.

\item If $\bG$ is already split over a tamely ramified extension of $K_0$, then pick one
tame $K_0$-torus $\bT$. To ensure that all the other $K_0$-tori are tame, it suffices that
the image of $H^1(K_0,N_{\bG}(\bT))\to
\operatorname{Hom}(\Gal(\overline{K_0}/K_0),W)$ consists of elements of
order coprime to
$p$,
which happens if $p_{\mathcal R}$ is
larger than every prime factor of $W$.
\end{itemize}
\end{remark}

\begin{prop}\label{p:hypotheses}
Hypotheses $\scrE$ and $\scrT$ are satisfied if $p$ is
sufficiently large, depending only on the absolute root datum of $\bG$ and the absolute
ramification index of $k$ over $\Q_p$.
\end{prop}

\begin{proof}
Let $\mathcal{R}$ be the absolute root datum of $\bG$.
Assuming $p>p_{\mathcal R}$, choose a tamely ramified extension $K/k$ with $[K:k]\le
M^2_{\mathcal R}$ as in Lemma~\ref{l:tame-split}.
We shall construct $E$ as an extension of
$K$, thus $\scrE$.(i) will be automatically satisfied.

Concerning Hypothesis $\scrE$.(ii), the key is to observe that every $x\in
\Bd(\bG,k)$ contained in a generic datum is a \emph{vertex} of the sub-building
$\Bd(\bG^0,k)$ attached to a tamely ramified twisted Levi subgroup $\bG^0\subset \bG$.
Let $\bT^0$ be a maximal $k$-torus in $\bG^0$ so that $x\in \Bd(\bG^0,k)\cap \Apt(\bG,\bT^0,K)$.
In particular $x$ is a $\Gal(K/k)$-fixed point of $\mathcal{A}(\bG,\bT^0,K)$.
Let $\bS^0\subset \bT^0$ be the maximal $k$-split subtorus of $\bT^0$.

Similarly as in~\cite[\S2.6]{RY14}, if we fix an alcove and consider the finitely many affine roots $\psi_i$ whose zero loci bound
an alcove and which are positive valued on the alcove, then there is a linear relation
$\sum_i
b_i \psi_i=1$ with $b_i\in \Z_{>0}$. Let $\ell_{\psi_i}$ be defined as in \cite[3.1]{MP94}. Put $b_{\bG}:=\mathrm{l.c.m}(\ell_{\psi_i}b_i)$.
Since $x$ is a vertex of $\Bd(\bG^0,k)$, we have $\psi(x)\in \frac{1}{b_\bG}\Z$ for
every affine
$k$-root $\psi\in
\Psi(\bG^0,\bS^0,k)$.
We view $\Psi(\bG^0,\bS^0,k)$, resp. $\Psi(\bG,\bT^0,K)$, as affine linear functions on
$X_*(\bS^0,k)\otimes_\Z \R$, resp. $X_*(\bT^0,K)\otimes_\Z \R$.
We need to recall the details of how $\Bd(\bG^0,k)$ injects in $\Bd(\bG,k)$, see p.585,
Rem.2.11, Rem.3.4 of~\cite{Yu01}.
The affine isomorphism $\mathcal{A}(\bG^0,\bT^0,K)\simeq \mathcal{A}(\bG,\bT^0,K)$ is
canonically defined only up to translation by $X_*(\bZ_{\bG^0},K)\otimes_\Z \R$.
The choice in~\cite{Yu01} is made essentially in such a way that the isomorphism be
$\Gal(K/k)$-equivariant.
Recall that $\bZ_{\bG^0}$ is anisotropic because $\bZ_\bG$ is anisotropic. Hence the
origin is the only $\Gal(K/k)$-fixed point in
$X_*(\bZ_{\bG^0},K)\otimes_\Z \R$, and the isomorphism is uniquely determined.
To rigidify the situation, we identify
the apartments $\mathcal{A}(\bG^0,\bS^0,k)$ and $\mathcal{A}(\bG,\bT^0,K)$ with
$X_*(\bS^0,k)\otimes_\Z\R$ and $X_*(\bT^0,K)\otimes_\Z\R$, respectively.
Since the center
$\bZ_{\bG^0}$ is anisotropic, we have that the $\Q$-span of $\Psi(\bG^0,\bS^0,k)$ is equal
to the space of all affine linear functions on $X_*(\bS^0,k)\otimes_\Z \Q$.
There is a positive integer $e$ such that the $\Z$-span of
$\frac{1}{e}\Psi(\bG^0,\bS^0,k)$
contains the image of $\Psi(\bG,\bT^0,K)$ under restriction to
$X_*(\bS^0,k)\otimes_\Z \Q$.
We have $\psi(x)\in \frac{1}{b_\bG e}\Z$ for every affine root $\psi \in \Psi(\bG,\bT^0,K)$.
Since the integers $b_{\bG}$ and $e$ depend only on the relative root system attached to
$(\bG^0,\bS^0,k)$, the absolute root system $\mathcal{R}$ attached to
$(\bG,\bT^0)$, and the action of $\Gal(K/k)$, and since there are finitely many root
systems of given rank (possibly
non-reduced,
and possibly reducible), we have $b_{\bG}e | e_{\mathcal R}$ for
some positive integer constant $e_{\mathcal R}$ depending only on $\mathcal R$.
If $p\nmid e_{\mathcal R}$, then a totally tamely ramified extension $E/K$ of degree
$e_{\mathcal R}$ will satisfy $\scrE$.(ii).

For $\scrE$.(iii),
one needs to show that for $X, Y\in\blieG(E)_{x,r}$,
\[\tag{$\ast$}
\log(\exp(X)\exp(Y))\equiv X+Y\pmod{\blieG(E)_{x,r^+}}.
\]
The Campbell-Hausdorff formula in the form given by Dynkin is
\[
\log(\exp(X)\exp(Y))=\sum_{d=1}^\infty \frac{(-1)^{d+1}}{d} Z_d = X + Y +
\sum_{d=2}^\infty \frac{(-1)^{d+1}}{d} Z_d,
\]
with
\[
Z_d=
\sum_{\substack{s_i+t_i\ge 1\\ s_d+t_d=1 }}
\frac{\ad(X)^{s_1}\ad(Y)^{t_1}\cdots\ad(X)^{s_{d-1}}\ad(Y)^{t_{d-1}}(Y)
(X^{s_d}Y^{t_d})}
{\sum^d_{i=1}(s_i+t_i)
\cdot
\prod^d_{i=1} r_i! s_i!
},
\]
where the convention is that $X^{s_d}Y^{t_d}$ is equal to $X$ if $s_d=1,t_d=0$, and is
equal to $Y$ if
$s_d=0,t_d=1$.

Since $[\blieG(E)_{x, a},\blieG(E)_{x, b}]\subset \blieG(E)_{x, a+b}$,
each summand in $Z_d$ is in
$\blieG(E)_{x, r'}$ where
\[
r'=r\sum\nolimits_i(s_i+t_i)-\ord_E(d)-\ord_E\left(\sum\nolimits_i(s_i+t_i)\right)
-\sum\nolimits_i\ord_E(s_i!)
-\sum\nolimits_i\ord_E(t_i!).
\]
By Legendre formula
\[
\ord_E(s_i!) = \frac{\ord_E(p)}{p-1} (s_i - \sigma) <
\frac{\ord_E(p) s_i}{p-1},
\]
where $\sigma$ is the sum of the base-$p$ expansion digits of $s_i$, we have
\[
r' > r\sum\nolimits_i(s_i+t_i)
-\ord_E(p)
\left(
\log_p(d) + \log_p\left(\sum\nolimits_i(s_i+t_i)\right)
+ \frac{\sum\nolimits_i(s_i+t_i)}{p-1}
\right).
\]
If $r>\dfrac{\ord_E(p)}{p-1}$, then
we have $r'\to \infty$ as $d\to \infty$, because
$\sum^d_{i=1}(s_i+t_i) \ge d$.
Hence the infinite sum converges.
Since $\log_p(x)\le \dfrac{x}{2\log p}$ for every $x\ge 2$,
\[
r' > \left(r- \frac{\ord_E(p)}{\log p}- \frac{1}{p-1}\right)\sum\nolimits_i(s_i+t_i).
\]
Thus $\dfrac{\ord_E(p)}{\log p}+\dfrac1{p-1}\le \dfrac12 \le \dfrac{r}{2}$ implies $r'>r$,
and that $(\ast)$ is satisfied.

Consider again a totally tamely ramified extension $E/K$ of degree $e_{\mathcal R}$.
Then $\ord_E(p)=e_{\mathcal R} \ord_K(p)\le e_{\mathcal R} M_{\mathcal R}\ord_k(p)$.
The conclusion is that if
\[\tag{$\ast\ast$}
p > p_{\mathcal R},\quad\ p\nmid e_{\mathcal R}, \quad\mathrm{ and } \quad
\frac{e_{\mathcal R} M_{\mathcal R}\ord_k(p)}{\log p}+\frac1{p-1}\le \frac12,
\]
then the extension $E$ satisfies the second statement of Hypothesis $\scrE$.(iii).
We observe that the above calculation shows that $\exp(\blieG(E)_{x,r})$ is a group when
$p$ satisfies $(\ast\ast)$.

We finally need to verify the first statement of Hypothesis $\scrE$.(iii).
Under the condition that $p>\dim(G)\ord_E(p)+1$, the map $\exp:
\blieG(E)_{0^+}\rightarrow
\bG(E)_{0^+}$ is a homeomorphism by \cite[App.B]{Wald:tordue-asterisque} (An alternative
approach would be %
to consider a faithful representation $\bG\hookrightarrow \GL_n$ for some $n$, see
also~\cite[App.B]{DeBacker-Reeder}. Note that $\blieG(E)_{0^+}$ (resp. $\bG(E)_{0^+}$) are the set of topologically nilpotent (resp. unipotent) elements since $p>\dim(G)$.)
Hence, $\exp_r=\exp|_{\blieG(E)_{x,r}}$ is injective. We also need to show that $\exp_r$
maps
onto $\bG(E)_{x,r}$.
To see this, write $\blieG(E)=\blieT^0(E)\oplus (\oplus_\phi \blieG_\phi(E))$ where
$\lieG_\phi$ is the root space of $\phi\in\Phi(\bT^0,\bG,E)$. Upon fixing a pinning, $\blieG(E)_{x,r}$ is
the
$\CaO_E$-span of $\blieG_{\phi}(\mathfrak p_E^n)$ with $\phi(x)+n/e_{E/k}\ge r$ where
$e_{E/k}$ is the ramification index of $E$ over $k$.
Likewise, $\bG(E)_{x,r}$ is generated by $\bT^0(E)_r$ and $\bU_\phi(\mathfrak p_E^n)$
where $\bU_\phi$ is the root subgroup of $\phi$. Then, $\exp_r$ takes
$\blieG_\phi(\mathfrak p_E^n)$ onto $\bU_\phi(\mathfrak p_E^n)$, and the proof of assertion
(4) on p.258 of  \cite[App.B]{Wald:tordue-asterisque}
shows that $\blieT^0(E)_r$ is exponentiated onto
$\bT^0(E)_r$. Now, if $p$ satisfies $(\ast\ast)$, then $\exp(\blieG(E)_{x,r})$ is a group
and hence equals $\bG(E)_{x,r}$.
In sum, Hypothesis $\scrE$ holds when $p$ satisfies $(\ast\ast)$ and
$p>\dim(G)e_{\mathcal R}M_{\mathcal R}\ord_k(p) +1$.

The assertion that $\scrT$ holds for $p$ sufficiently large goes back
to~\cite[\S5]{Adler-Roche:intertwinning}, which treated the Lie algebra version.
A recent treatment of the existence of good elements for the group case is
\cite[Thm~3.6]{Fin18a}, with the sharp result that Hypothesis $\scrT$ holds when $\bG$
splits over a tamely ramified extension and $p$ does not divide the order of the Weyl
group of $\bG$.
\end{proof}

\begin{rem} \
\begin{itemize}
\item By definition, a finite extension $E$ of $k$ is tame if the residue characteristic
$p$
is coprime to the ramification index of $E$ over $k$.
Thus, Hypothesis $\scrE.(ii)$ implies $p$ is coprime to the order of every $\xo$
contained in a generic datum. As we have seen in the proof, this is satisfied if $p>h$,
the Coxeter number of $\bG$,
and $\bG$ splits over a tamely ramified extension of $k$.

\item
In Hypothesis $\scrE.(iii)$, if $\bG$ is a classical group and $p\neq 2$, one can use a
Cayley map
instead
of the exponential map.
When $p$ is very good (\cite[(8.9)]{BKV}), and $\bG$ is semisimple and simply
connected,  a quasi-logarithmic map
satisfying $\scrE.(iii)$ is constructed in~\cite[Lem~C.4]{BKV}.
In general the mock exponential map introduced by Adler could be used, compare also
with~\cite[Hyp.~3.2.1]{DeBacker:homogeneity}.
\end{itemize}
\end{rem}

For convenience, we shall work under Hypotheses $\scrT$ and $\scrE$ in
Section~\ref{s:proof} below, which is devoted to prove Proposition \ref{p: power-saving}.
However, inspecting
the recursive argument in Proposition~\ref{lem: decreasing}, it is sufficient to have
Hypothesis
$\scrE.(iii)$ for large enough $r$, and this is always satisfied by the same
argument. Also we shall need in
Lemma~\ref{lem:orbital-Lie} below that the isomorphisms in $\scrE.(iii)$ be
$G$-equivariant, which is also satisfied for large enough $r$.
Hypothesis $\scrE.(ii)$ is automatically satisfied for an extension of large
enough degree.
Hence a natural assumption for our setting is:

\medskip
\noindent $(\mathscr{T}')$
Every $k$-torus $\bT$ in $\bG$ is tame, and for every large enough $r$, every nontrivial
coset in $T_r/T_{r+}$ contains a $\bG$-good element.
\medskip

More precisely, the preceding paragraph explains that the arguments in Section~\ref{s:proof} prove
Proposition \ref{p: power-saving} under Hypothesis $(\mathscr{T}')$, instead of Hypotheses $\scrT$ and $\scrE$.
This in turn implies Theorems
\ref{t:asymptotic-orb-int} and \ref{t:asymptotic-char} under Hypothesis $(\mathscr{T}')$.

\begin{rem}\label{r:assumption}
To reach an optimal assumption, one could
also factor out the center $\bZ_{\bG}$, which
shouldn't play a role in estimating orbital integrals, and therefore it shouldn't be
necessary to assume that $\bZ_{\bG}$ is tame.
\end{rem}

\subsection{A uniform bound on orbital integrals of supercuspidal
coefficients}\label{sub:asymptotic-orbital}

For a semisimple element $\gamma\in G_{\mathrm{ss}}$ let $\fkg_\gamma$ denote the Lie
algebra of the connected centralizer of $\gamma$ in $G$. Define
$$D(\gamma)=D^G(\gamma):=\left |
\det(1-\mathrm{Ad}(\gamma)|_{\fkg/\fkg_\gamma})\right|~\in \R_{>0}.$$
Note that it is unnecessary to assume $\gamma$ to be regular.
Given a generic $G$-datum $\Sigma$, Lemma~\ref{l:sc-coeff} provides us with the
pseudocoefficient $f_{\pi_\Sigma}\in \cH(G)$ coming from $J_\Sigma$ and $\rho_\Sigma$.

The following is a key local result of this paper.

\begin{thm}\label{t:asymptotic-orb-int}
Assume Hypothesis $(\mathscr{T'})$.
  There exist constants $C,\nu>0$ depending only on $G$ such that for every generic
  $G$-datum $\Sigma$,
  $$D(\gamma)^{1/2}|\orbi_\gamma(f_{\pi_\Sigma})|\le C\cdot
  \deg(\pi_\Sigma)^{1-\nu},\quad \forall \gamma\in G_{\mathrm{ss}}\backslash Z.$$
  In fact we can take any $\nu < (\dim G)^{-1}$.
\end{thm}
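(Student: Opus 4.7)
The plan is to reduce the whole bound to a sharp estimate for the orbital integral of the characteristic function of $J_\Sigma$. By Lemma~\ref{l:sc-coeff} the pointwise inequality $|f_{\pi_\Sigma}|\le \deg(\pi_\Sigma)\,\triv_{J_\Sigma}$ immediately yields
$$D(\gamma)^{1/2}\,|O_\gamma(f_{\pi_\Sigma})| \;\le\; \deg(\pi_\Sigma)\cdot D(\gamma)^{1/2}\,O_\gamma(\triv_{J_\Sigma}).$$
Combining the identity~\eqref{bound-on-d} with the bound $\dim\rho_\Sigma\le q^{\dim G}$ recalled in the text, the theorem will follow (with an implicit constant depending only on $G$) once we prove an estimate of the shape
\begin{equation}\label{eq:plan-target}
D(\gamma)^{1/2}\,O_\gamma(\triv_{J_\Sigma}) \;\le\; C\cdot \vol(J_\Sigma)^{\nu},
\end{equation}
the factor $q^{(\dim G)\nu}$ being absorbed into $C$; this is precisely what forces $\nu$ to be smaller than $(\dim G)^{-1}$.

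First I would enlarge $J_\Sigma$ to a more tractable compact open subgroup. Because $G^i_{x,s_{i-1}} = G_{x,s_{i-1}}\cap G^i \subset G_{x,s_0}$ for each $i\ge 1$, one has $J_\Sigma\subset L_s:= G^0_{[x]}\cdot G_{x,s}$ with $s=s_0$, so $L_s$ is of the form ``twisted Levi times a deep principal congruence subgroup.'' The index $[L_s:J_\Sigma]$ is controlled by orders of Moy-Prasad layer quotients, hence by a power of $q$ depending only on $\dim G$ and on the twisted Levi sequence. Standard manipulations with orbital integrals then reduce~\eqref{eq:plan-target} to the analogous bound for $\triv_{L_s}$.

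Second I would pass to the Lie algebra using hypothesis $(\mathrm{H}\ell)$. Only $\gamma$ which are $G$-conjugate into $L_s$ contribute, and after such a conjugation one writes a topological Jordan decomposition $\gamma=\gamma_0\cdot\gamma_{0^+}$ with $\gamma_0$ in a twisted Levi of the sequence and $\gamma_{0^+}\in G_{0^+}$. The $\Ad$-equivariant homeomorphism $\varphi$ then converts $O_\gamma(\triv_{G_{x,s}})$ into a Lie-algebra orbital integral $O_X(\triv_{\lieG_{x,s}})$ where $X:=\varphi(\gamma_{0^+})$. A quantitative count of $\Ad(G)$-conjugates of $X$ intersecting the lattice $\lieG_{x,s}$, combined with the observation that $D(\gamma)^{1/2}$ cancels the Jacobian of the orbit map $G/G_\gamma\to \Ad(G)\cdot X$, produces a bound essentially of the form $q^{-s(\dim G-\dim G_\gamma)}$ times harmless geometric factors. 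Since $\gamma\notin Z$ one has $\dim G-\dim G_\gamma\ge 1$, and compared with $\vol(G_{x,s})\asymp q^{-s\dim G}$ this gives a power saving of order $\vol(J_\Sigma)^{1/\dim G}$, which is~\eqref{eq:plan-target}.

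The main obstacle is making every bound uniform simultaneously in $\gamma\in G_{\mathrm{ss}}\setminus Z$ and in the generic datum $\Sigma$. In particular, Proposition~\ref{p:char-orb-int} is unavailable at non-regular $\gamma$, so the argument must stay on the orbital-integral side throughout and must also handle the case when the semisimple part of $\gamma$ sits in a proper twisted Levi of the sequence of $\Sigma$ (where $D(\gamma)$ fails to control the full adjoint orbit of $\gamma$ inside $\bG$). Matching the index $[L_s:J_\Sigma]$ against the lattice count requires a layer-by-layer dissection of $J_\Sigma$ that leans heavily on the Moy-Prasad compatibility $\varphi(G_{x,r})=\lieG_{x,r}$ granted by $(\mathrm{H}\ell)$, and keeping the implicit constants independent of the tamely ramified twisted Levi sequence is the subtlest point.
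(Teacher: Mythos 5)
Your opening move agrees with the paper: you bound $|f_{\pi_\Sigma}|$ pointwise by $\deg(\pi_\Sigma)\,\triv_{J_\Sigma}$ and translate the target into a power-saving bound for an orbital integral of a characteristic function. But the reduction that follows contains a genuine error.

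You enlarge $J_\Sigma$ to $L_{s_0}:=G^0_{[x]}G_{x,s_0}$ and assert that the index $[L_{s_0}:J_\Sigma]$ is a power of $q$ controlled by $\dim G$ and the twisted Levi sequence. This is false: the index depends on the depths $\vec r$, not just on the groups $\bG^i$. Roughly,
$$[L_{s_0}:J_\Sigma]\;\asymp\;\prod_{i=1}^{d} q^{(s_{i-1}-s_0)(\dim\bG^{i}-\dim\bG^{i-1})},$$
which blows up as $s_{d-1}\to\infty$ with the $\bG^i$ fixed — exactly the regime in which $\deg(\pi_\Sigma)\to\infty$. So your chain $O_\gamma(\triv_{J_\Sigma})\le O_\gamma(\triv_{L_{s_0}})$ loses an unbounded factor and cannot yield the claimed $\vol(J_\Sigma)^\nu$ on the right. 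The paper instead uses $L_s:=G^{d-1}_{[x]}G_{x,s}$ with $s=s_{d-1}=r_{d-1}/2$. The crucial point is that $G_{x,s_{d-1}}\subset J_\Sigma$ (it is the last factor of $J_\Sigma$), so $\vol(J_\Sigma)\ge\vol(G_{x,s_{d-1}})$ and hence $q^{-s_{d-1}}\lesssim(\vol(G_{x,1})\deg(\pi_\Sigma))^{-1/\dim G}$. Then the power saving $O_\gamma(\triv_{L_{s_{d-1}}})\le C_1(s_{d-1}+1)q^{-s_{d-1}}D(\gamma)^{-1/2}$ (Proposition~\ref{p: power-saving}) converts into $\deg(\pi_\Sigma)^{-\nu}$. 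With your choice $s=s_0$ neither inclusion $G_{x,s_0}\subset J_\Sigma$ nor the volume comparison holds, so the conversion $q^{-s}\leadsto\deg(\pi_\Sigma)^{-\nu}$ is unavailable.

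The second gap is the treatment of $O_\gamma(\triv_{L_s})$. Your sketch (topological Jordan decomposition, Harish-Chandra descent, then the Lie-algebra homeomorphism plus a lattice count) quietly replaces $\triv_{L_s}$ by $\triv_{G_{x,s}}$; but $L_s$ contains the depth-zero compact-mod-center $G'_{[x]}$, which is not moved by scaling and does not sit inside $G_{x,s}$. It is also unjustified that the ``depth-zero part'' $\gamma_0$ lands in one of the twisted Levis of the given sequence. The paper's actual mechanism is an induction on $s$ (Proposition~\ref{lem: decreasing}) that peels one layer off $L_s$ at a time, saving a factor of $q$ per step by comparing coset decompositions; the uniform factor of $q$ is extracted from the finite-group Lemma~\ref{lem: orth} about twisted Levi subalgebras, and the Lie-algebra scaling you describe (Lemma~\ref{lem:orbital-Lie}, Lemma~\ref{lem: decreasing2}) enters only to bound the residual piece $O_\gamma(\triv_{G'_{x,s-2}G_{x,s}})$, not $O_\gamma(\triv_{L_s})$ itself. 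Your ``lattice count $+\ D(\gamma)^{1/2}$ cancels the Jacobian'' bound of shape $q^{-s(\dim G-\dim G_\gamma)}$ is not proved by the paper and is not needed; the weaker $q^{-s}$ from the inductive argument suffices and is what enters the final exponent $\nu<(\dim G)^{-1}$.
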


  In particular the theorem implies that if $\gamma\notin Z$ then
\[
\lim_{\deg(\pi)\ra\infty}
\frac{\orbi_\gamma(f_\pi)}{\orbi_1(f_\pi)}=\lim_{\deg(\pi)\ra\infty}
\frac{\orbi_\gamma(f_\pi)}{\deg(\pi)}=0,
\]
where $\pi=\pi_\Sigma$ varies in $\Irr^\Yu(G)$. If, on the contrary $\gamma\in Z$ then
clearly $\orbi_\gamma(f_\pi)/\orbi_1(f_\pi)=\omega_{\pi}^{-1}(\gamma)$, where
$\omega_\pi$ is the central character of $\pi$.
So the above limit is never zero.

\begin{rem}\label{r:disc-analog-orb-int}
  An interesting question is whether the above Theorem~\ref{t:asymptotic-orb-int} remains
  valid if $\pi$ is allowed more generally to run over $\Irr^2(G)$. We are inclined to
  believe that it is at least true for every sequence in $\Irr^{\scusp}(G)$, possibly
  with a different value of $\nu\in \R_{>0}$. %
\end{rem}

\begin{proof}[Proof of Theorem \ref{t:asymptotic-orb-int}]
The orbital integral vanishes unless $\gamma$ is elliptic, so we assume that $\gamma$ is
noncentral and elliptic semisimple from now on.
Let $G_\gamma:=Z(\gamma)$ and $\orb(\gamma)$ the $G$-orbit of $\gamma$ in $G$.

For $\Sigma=(\vec{\bG},x,\vec r,\vec\phi,\rho)$ as in Definition~\ref{defn: generic
G-datum}, we let
$$\bG^{\prime}:=\bG^{d-1},\quad G^{\prime}:=G^{d-1},\quad\mbox{and}\quad
L_s:=G_{x,s}G^{\prime}_{x}~\mbox{for}~s\in\mathbb R_{\ge 0}.$$
Since $L_{s_\Sigma}$ is an open compact subgroup of $G$ containing $J_\Sigma$, where we
recall that $s_\Sigma=r_{d-1}/2$, it follows from Lemma~\ref{l:sc-coeff} that we have the
inequalities
\begin{align}\label{eq: orb}
\frac{\left|\orbi_\gamma(f_{\pi_\Sigma})\right|}{\deg(\pi_\Sigma)}
\le\vol_{\orb(\gamma)}(J_\Sigma\cap \orb(\gamma))
\le\vol_{\orb(\gamma)}(L_{s_\Sigma}\cap \orb(\gamma))=\orbi_\gamma(\triv_{L_{s_\Sigma}}).
\end{align}

Our strategy is to study a power-saving upper bound for $\orbi_\gamma(\triv_{L_s})$ as
$\gamma$ runs over semisimple elements of $G$ and as $s\to \infty$.
Indeed as $\Sigma$ moves along a sequence of generic $G$-data such that
$\deg(\pi_\Sigma)\ra\infty$, up to conjugacy,
there are only a finite number of choices for $(\bG^{\prime},x)$ appearing in $\Sigma$
with $x\in\Bd(G^{0})$.
The crucial estimate is the following, whose proof is postponed to Section~\ref{s:proof}.
(Regarding the hypothesis, see the discussion above Remark \ref{r:assumption}.)

\begin{prop}\label{p: power-saving}
Assume Hypothesis $(\mathscr{T'})$.
There exists a constant $C_1>0$ depending only on $G$ such that for all generic $G$-datum
$\Sigma$, $s\in \R_{>0}$ and all noncentral semisimple $\gamma\in G$,
\[
\orbi_\gamma(\triv_{L_s})\le C_1\cdot (s+1) \cdot q^{- s}\cdot D(\gamma)^{-1/2}.
\]
\end{prop}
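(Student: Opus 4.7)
The plan is to exploit the factorization $L_s = G_{x,s} \cdot G'_{[x]}$, where $G' = G^{d-1}$ is a proper twisted Levi of $G$, together with the hypothesis (H$\ell$) to pass between group and Lie algebra. I would first handle the base case $s = 0$: the Harish--Chandra bound $D(\gamma)^{1/2} O_\gamma(\triv_K) \le C_K$ holds uniformly for compact open $K$, and since the possible $L_0 = G_{x,0} G'_{[x]}$ fall in finitely many $G$-conjugacy classes as $\Sigma$ varies (determined only by $G$), this yields the desired estimate with $(s+1)q^{-s} = 1$.

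For $s > 0$, the main idea is to use the fact that $G_{x,s}$ is pro-$p$ to decompose each semisimple $\gamma_L \in L_s \cap \mathcal{O}(\gamma)$ via topological Jordan decomposition $\gamma_L = \sigma u$, where $\sigma$ is absolutely semisimple, $u$ is topologically unipotent, and $\sigma u = u\sigma$. Using the projection $L_s \twoheadrightarrow L_s/(L_s \cap G_{0^+}) \simeq G'_{[x]}/(G'_{[x]} \cap G_{0^+})$, whose kernel contains no nontrivial absolutely semisimple element, one may arrange $\sigma \in G'_{[x]}$ by an $L_s$-conjugation (a standard Hensel-lifting argument for semisimple classes). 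I would then partition $\mu_\gamma(L_s)$ by the $L_s$-conjugacy class of $\sigma$ in $G'_{[x]}$, obtaining an outer sum bounded by Harish--Chandra's bound on $G'_{[x]}$ of the form $\ll D^{G'}(\sigma)^{-1/2}$, times an inner integral over the topologically unipotent correction $u$ lying in $Z_G(\sigma) \cap L_s \cap G_{0^+}$.

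The factor $D^G(\gamma)^{-1/2}$ recovers from $D^G(\gamma) \asymp D^{G'}(\sigma) \cdot D^{G/G'}(\sigma)$, with the relative discriminant $D^{G/G'}(\sigma)$ bounded on the compact $G'_{[x]}$. For the inner integral, I would use (H$\ell$) to replace $u$ by its Lie-algebra image in $\lieZ_G(\sigma) \cap \lieG_{x,s}$; a Moy--Prasad index computation then controls the relevant orbital measure by a bound $\ll q^{-s \cdot (\dim G - \dim Z_G(\sigma))}$. Noncentrality of $\sigma$, which is inherited from that of $\gamma$, ensures $\dim G > \dim Z_G(\sigma)$ and so delivers the required factor $q^{-s}$ (with room to spare). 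The logarithmic $(s+1)$ presumably arises from summing over possible Moy--Prasad depths of $u$ in the filtration of $Z_G(\sigma)$, which range over an interval of length $O(s)$, each stratum contributing a uniformly bounded amount.

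The principal obstacle I anticipate is the non-regular case: when $\sigma$ is non-regular semisimple, the centralizer $Z_G(\sigma)$ need not be aligned with $G'$ in a clean way, and the Harish--Chandra descent becomes delicate, likely requiring a stratification by the semisimple type of $\sigma$ or a further induction on the semisimple rank of $Z_G(\sigma)$. A secondary technical difficulty is making the $L_s$-conjugation that places $\sigma$ into $G'_{[x]}$ uniformly controlled in $\sigma$ without introducing an uncontrolled $L_s$-centralizer volume; this is where the hypotheses $q \ge 3$ and (H$\ell$) jointly enter, ensuring sufficient separation in the residue-field computations underpinning the topological Jordan decomposition.
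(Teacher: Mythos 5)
Your approach is genuinely different from the paper's, and it has a real gap at its foundation. You propose to decompose $\gamma_L = \sigma u$ via the topological Jordan decomposition, descend the semisimple part to $G'_{[x]}$, and then claim that ``noncentrality of $\sigma$, which is inherited from that of $\gamma$, ensures $\dim G > \dim Z_G(\sigma)$.'' This inheritance is false: a noncentral topologically unipotent $\gamma$ (equivalently, $\gamma$ of positive depth) has \emph{trivial} absolutely semisimple part $\sigma$. In that case $Z_G(\sigma) = G$ and your claimed inner-integral bound $\ll q^{-s(\dim G - \dim Z_G(\sigma))}$ degenerates to $1$, producing no power saving at all. This is not a corner case — it is precisely the hard regime, since elements deep in $G_{x,s}$ with $s$ large are exactly topologically unipotent. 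Related to this, the discriminant bookkeeping $D^G(\gamma) \asymp D^{G'}(\sigma)\cdot D^{G/G'}(\sigma)$ only sees the semisimple part $\sigma$ and does not capture how $D^G(\gamma)$ degenerates as $u$ approaches $1$; recovering the correct $D(\gamma)^{-1/2}$ in your framework would require a separate argument on the $u$-integral that your Moy--Prasad count does not supply.

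The paper avoids the Jordan decomposition entirely and instead runs a single induction on the integer parameter $s$. The key recursive inequality (Proposition~\ref{lem: decreasing}) is proved by a commutator/Moy--Prasad estimate showing that $O_\gamma(\triv_{L_s}) \le q^{-1}O_\gamma(\triv_{L_{s-1}})$ up to an error term governed by whether $s$ is below or above the minimal depth $md(\gamma)$, and the error term $O_\gamma(\triv_{G'_{x,s-2}G_{x,s}})$ is controlled by Lemma~\ref{lem: decreasing2}, a Harish--Chandra descent to the Lie algebra via hypothesis (H$\ell$) and the exponential, landing on the germ-expansion bound of Proposition~\ref{p:Kottwitz}. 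This is exactly the machinery that handles the topologically unipotent case your approach misses. Note also that the factor $(s{+}1)$ arises in the paper from accumulating the $s$ error terms $Cq^{-s}$ in the telescoped recursion, not (as you conjecture) from a sum over Moy--Prasad depths of $u$. Your base case $s=0$ via a Harish--Chandra/Shalika bound over the finitely many conjugacy classes of $L_0$ does coincide with the paper's use of Proposition~\ref{p:Kottwitz}, but the rest would need to be reworked to handle the $\sigma$-central (topologically unipotent) case, likely by importing something equivalent to the paper's Lemma~\ref{lem: decreasing2}.
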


To relate $\deg(\pi_\Sigma)$ and $s_\Sigma$, we deduce the following
from~\eqref{bound-on-d} and the fact that $J_\Sigma\supset G_{x,s_\Sigma}$:
\[
\deg(\pi_\Sigma)\le q^{\dim G}\vol(J_\Sigma)^{-1}\le q^{\dim G}\vol(G_{x,s_\Sigma})^{-1}.
\]
 Since $G_{x,r}/G_{x,r+1}$ is in bijection with  $\fkg_{x,r}/\fkg_{x,r+1}$ for $r>0$
 by hypothesis $\scrE$ and $\varpi \fkg_{x,r}=\fkg_{x,r+1}$, we have
 $[G_{x,r}:G_{x,r+1}]=q^{\dim G}$.  Thus for $s\in \R_{\ge 0}$,
\[
\vol(G_{x,s})\ge q^{-\lceil s\rceil \dim G}\vol(G_{x,1}).
\]
Hence
$\vol(G_{x,1}) \deg(\pi_\Sigma) \le q^{(s_\Sigma+2) \dim G}$.

We deduce from this, the inequality~\eqref{eq: orb}, and Proposition~\ref{p:
power-saving} above that
\[
\frac{ \left|\orbi_\gamma(f_{\pi_\Sigma})\right|}{\deg(\pi_\Sigma)}\le q^2 C_1 \cdot
(s_\Sigma+1)\cdot \left(\vol(G_{x,1}) \deg(\pi_\Sigma)\right)^{-\frac{1}{\dim
G}}D(\gamma)^{-1/2}.
\]
The proof of Theorem~\ref{t:asymptotic-orb-int} is complete.
\end{proof}

\begin{rem}\label{r:non-cpt-center}
Let us explain what can be done when the center $Z$ is not compact.
Theorem \ref{t:asymptotic-orb-int} and Proposition~\ref{p: power-saving} remain valid as
stated. The only modification to the proof in Section~\ref{s:proof} is that we
need to use the subgroups
$L_s:=G_{x,s}G^{\prime}_{[x]}$ which are only compact-mod-center, in place of
$G_{x,s}G'_x$.

Lemma \ref{l:sc-coeff} is still true in this case, if $\vol(J)$ is replaced with the
volume of $J/Z$ in $G/Z$, and verified by the same argument.
It may be troubling at first that the support of $f_\pi$ is compact only modulo center
but equality \eqref{e:char-orb-int} remains valid.\footnote{We recall that \eqref{e:char-orb-int}
follows from \cite[Thm 5.1]{Art93}, based on Arthur's local trace formula  \cite[Thm 4.2]{Art93}. In the case $Z$ is not compact,
 \cite[Prop 6.1]{ArtSTF3} gives the local trace formula with fixed central character and similarly implies the analogue of \eqref{e:char-orb-int}.}

Conjecture~\ref{c:asymptotic-char} below remains the same, even when the center $Z$ is
not compact. One only needs to remember that in our convention $\Irr^\scusp(G)$ consist
of \emph{unitary} supercuspidal representations, so the central character of $\pi$ is
also unitary.

Finally, in the global application we appeal to the simple trace formula with fixed
character on a closed central subgroup so as to allow $f_\pi$ as the local component of a
test function.
Alternatively one could work with a truncated (pseudo-)coefficient (cf.
\cite[1.9]{HL04}), which is compactly supported, in place of $f_\pi$.
\end{rem}

\subsection{Asymptotic behavior of supercuspidal
characters}\label{s:asymptotic-behavior-sc}
The main result, Theorem~\ref{t:asymptotic-char}, of this section may be rephrased as a
uniform upper bound for the characters of supercuspidal representations constructed by Yu
on elliptic regular elements via Proposition~\ref{p:char-orb-int}.

 Let us recall the context of the problem.
 Even though the precise character formulas for supercuspidal (and discrete series)
 representations of a $p$-adic group remain largely mysterious, we conjectured in
 \cite[Conjecture 4.1]{KST-localconstancy} that the character values behave in a
 controlled manner as the formal degree tends to infinity.
Recall that $\Irr^\scusp(G)$ denote the set
of isomorphism classes of supercuspidal representations.
Write $G_{\mathrm{ell}}$ (resp. $G_{\mathrm{rs}}$) for the set of elliptic (resp. regular
semisimple) elements in $G$.

\begin{conj}\label{c:asymptotic-char}
(i)  For every $\gamma\in G_{\mathrm{rs}}$,
\[
\lim_{\pi\in \Irr^{\scusp}(G)\atop \deg(\pi)\ra\infty}
\frac{\Theta_{\pi}(\gamma)}{\deg(\pi)}=0.
\]
In other words, for every $\gamma\in G_{\mathrm{rs}}$ and every $\epsilon>0$ there exists
$d_{\gamma,\epsilon}>0$ such that $|\Theta_\pi(\gamma)| < \epsilon \deg(\pi)$ for every
$\pi\in
\Irr^{\scusp}(G)$ with $\deg(\pi)>d_{\gamma,\epsilon}$.

\smallskip

(ii) Let $\mathcal{B}\subset G$ be a bounded subset. Then
   there exist constants $\nu>0$ depending only on $G$ and $C_\mathcal{B}>0$ depending
   only on $G$ and $\mathcal{B}$ such that
   \[
D(\gamma)^{1/2} |\Theta_{\pi}(\gamma)|\le C_\mathcal{B}\cdot \deg(\pi)^{1-\nu},\quad
   \forall \pi \in \Irr^{\scusp}(G),~\forall \gamma\in G_{\mathrm{rs}}\cap \mathcal{B}.
\]

\smallskip

(iii) There exist constants $\nu>0$ and $C_{\mathrm{ell}}>0$ depending only on $G$
   such that
   \[
D(\gamma)^{1/2} |\Theta_{\pi}(\gamma)|\le C_{\mathrm{ell}}\cdot
   \deg(\pi)^{1-\nu},\quad \forall \pi \in \Irr^{\scusp}(G),~\forall \gamma\in
   G_{\mathrm{rs}}\cap G_{\mathrm{ell}}.
\]
\end{conj}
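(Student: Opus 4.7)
The plan is to combine Theorem~\ref{t:asymptotic-orb-int} with Proposition~\ref{p:char-orb-int} to handle part~(iii), and then to leverage the Selberg-type vanishing of supercuspidal characters on the non-elliptic regular locus in order to reduce parts~(i) and (ii) back to part~(iii).

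Throughout, I assume hypothesis~$(\mathrm{H}\ell)$ and that the residue characteristic of $k$ is large enough for Kim's exhaustion theorem (cf.~Section~\ref{sub:sc-cpt-ind}), so that $\Irr^{\scusp}(G) = \Irr^{\Yu}(G)$. Every supercuspidal $\pi$ then arises as $\pi_\Sigma$ for a generic datum $\Sigma$, and Section~\ref{sub:explicit-coeff} supplies an explicit pseudo-coefficient $f_\pi$ to which Theorem~\ref{t:asymptotic-orb-int} applies.

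For part~(iii), fix $\gamma \in G_{\mathrm{rs}} \cap G_{\mathrm{ell}}$ and $\pi \in \Irr^{\scusp}(G)$. The general identity $\Theta_\pi(g) = \Theta_{\pi^\vee}(g^{-1})$, combined with Proposition~\ref{p:char-orb-int} applied at $\gamma^{-1}$ (which is again elliptic regular) and with the invariance $D(\gamma^{-1}) = D(\gamma)$, gives
\[
|D(\gamma)^{1/2}\,\Theta_\pi(\gamma)| \;=\; |D(\gamma^{-1})^{1/2}\, O_{\gamma^{-1}}(f_\pi)|.
\]
Theorem~\ref{t:asymptotic-orb-int} then bounds the right-hand side by $C\deg(\pi)^{1-\nu}$ for constants $C,\nu > 0$ depending only on $G$, which is exactly part~(iii).

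For parts~(i) and~(ii) the elliptic regular case is covered by part~(iii), so it suffices to establish that $\Theta_\pi(\gamma) = 0$ for every supercuspidal $\pi$ and every non-elliptic regular semisimple $\gamma$. This is the classical Selberg-type vanishing: such a $\gamma$ lies in a proper Levi $M$ of some parabolic $P = MN$, and the Casselman--Deligne asymptotic formula expresses $\Theta_\pi$ on the $N$-contracting part of $M_{\mathrm{reg}}$ in terms of the normalized Jacquet module $\pi_N$, which vanishes for supercuspidal $\pi$. Local constancy of $\Theta_\pi$ on $G_{\mathrm{rs}}$ together with the relative Weyl group action then propagates the vanishing to all of $M_{\mathrm{reg}}$, and letting $M$ range over proper Levis exhausts the non-elliptic regular semisimple locus. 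Granted this, the bound in~(ii) is trivial on non-elliptic $\gamma \in \mathcal{B}$ and follows from~(iii) on elliptic $\gamma \in \mathcal{B}$; likewise~(i) is trivial for non-elliptic $\gamma$ and follows from~(iii) for any fixed elliptic $\gamma$.

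The principal obstacle to removing the hypotheses on $p$ is Kim's exhaustion together with~$(\mathrm{H}\ell)$, both of which exclude small primes. Extending the assertions to all $\pi \in \Irr^2(G)$, as advocated in Remark~\ref{r:disc-analog-orb-int}, would demand uniform orbital-integral bounds for pseudo-coefficients of non-supercuspidal discrete series; the compact-induction realization central to Section~\ref{sub:explicit-coeff} and to the proof of Theorem~\ref{t:asymptotic-orb-int} is not available in that generality, and new techniques seem to be required.
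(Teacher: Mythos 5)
The statement under review is a \emph{conjecture}, and the paper does not claim to prove it in full. What the paper actually establishes (Theorem~\ref{t:asymptotic-char}) is part~(iii) only, for $\pi\in\Irr^{\Yu}(G)$ under hypothesis~$(\mathrm{H}\ell)$, extended to $\Irr^{\scusp}(G)$ via Kim's exhaustion. Your argument for part~(iii) is correct and is essentially identical to the paper's: combine Proposition~\ref{p:char-orb-int} (or equivalently \eqref{e:char-orb-int}) with Theorem~\ref{t:asymptotic-orb-int}, and use Kim's exhaustion. (The detour through $\gamma^{-1}$ is fine; one could equally re-index $\pi\mapsto\pi^\vee$, which preserves formal degree.)

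However, your derivation of parts~(i) and~(ii) from part~(iii) contains a genuine error. You assert that $\Theta_\pi(\gamma)=0$ for every supercuspidal $\pi$ and every non-elliptic regular semisimple $\gamma$, by invoking the Casselman--Deligne asymptotic expansion and then claiming local constancy ``propagates the vanishing to all of $M_{\mathrm{reg}}$.'' That propagation step is false. The Casselman asymptotic formula gives $\Theta_\pi(m)=0$ only for $m$ sufficiently far in the $N$-contracting direction (i.e.\ once $|\alpha(m)|$ is small enough for all roots $\alpha$ of $\mathrm{Lie}\,N$); local constancy cannot extend this to the entire non-elliptic regular set, which is unbounded and on which $\Theta_\pi$ is in general nonzero --- for $\mathrm{GL}(2)$ this is visible already from the Sally--Shalika formula. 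What does vanish for non-elliptic $\gamma$ is the ordinary orbital integral $O_\gamma(f_\pi)$ of a pseudo-coefficient (Proposition~\ref{p:char-orb-int}), but this is a different object from $\Theta_\pi(\gamma)$; the relation between them on the non-elliptic regular locus passes through \emph{weighted} orbital integrals, via Arthur's formula in~\cite{Art87}. The paper is explicit on this point (end of \S\ref{s:asymptotic-behavior-sc}): bounding those weighted orbital integrals is precisely what is needed for parts~(i) and~(ii) and is deferred to future work. So parts~(i) and~(ii) of the conjecture remain open, and the claim that they reduce to part~(iii) is not correct.
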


     Note that (ii) implies (i). Since $G$ has finitely many (elliptic) maximal tori up to
     conjugacy (\cite[p.320, Cor 3]{PR94}), (ii) also implies (iii).  In \cite[Thm
     4.2]{KST-localconstancy}, we
     have
     proved a result for (ii), provided that the residue
     characteristic of $k$ is sufficiently large and that $\gamma$ runs over the set
     $G_{0^+}$ of
     topologically unipotent elements (which are not necessarily elliptic). The argument
     of that paper is based on an estimate of the number of fixed points of $\gamma$
     acting on certain coset spaces (without estimating orbital integrals).

Theorem \ref{t:asymptotic-orb-int} restricted to regular elements may be interpreted in
terms of character values to establish (iii) of the conjecture provided that the residue
characteristic of
$k$ is large enough. Note that the proof of Theorem \ref{t:asymptotic-orb-int} is
independent of what is done in
\cite{KST-localconstancy} and relies on quite a different method. %

\begin{thm}\label{t:asymptotic-char} Under Hypothesis $(\mathscr{T'})$, part (iii) of
Conjecture~\ref{c:asymptotic-char} holds true if $\pi\in\Irr^{\Yu}(G)$.
In particular, part (iii) of the conjecture is
true if the hypotheses of \cite[3.4]{Kim07} are also met (cf. the last paragraph of
\S\ref{sub:sc-cpt-ind}).
\end{thm}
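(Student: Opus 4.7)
The plan is to combine the two ingredients assembled in this section: Proposition~\ref{p:char-orb-int}, which identifies orbital integrals of pseudo-coefficients with character values on the regular elliptic set, and Theorem~\ref{t:asymptotic-orb-int}, which provides the uniform power-saving bound on such orbital integrals. Since we are restricting $\gamma$ to $G_{\mathrm{ell}}\cap G_{\mathrm{rs}}$, these two results interlock directly with no further analytic input.

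More concretely, write $\pi=\pi_\datum$ for some generic $G$-datum $\datum$, and let $f_{\pi_\datum}$ be the explicit coefficient constructed in Lemma~\ref{l:sc-coeff}. By Lemma~\ref{l:sc-coeff}(i) this is a pseudo-coefficient of $\pi_\datum$, so Proposition~\ref{p:char-orb-int} yields
\[
O_\gamma(f_{\pi_\datum})=\Theta_{\pi_\datum^\vee}(\gamma), \qquad \gamma\in G_{\mathrm{ell}}\cap G_{\mathrm{rs}}.
\]
Since any such $\gamma$ is noncentral (the center is compact and every regular elliptic element has a torus centralizer that is noncentral in $\bG$ as soon as $\bG$ is not itself a torus), Theorem~\ref{t:asymptotic-orb-int} applies and gives constants $C,\nu>0$ depending only on $G$ such that
\[
D(\gamma)^{1/2}\,\bigl|\Theta_{\pi_\datum^\vee}(\gamma)\bigr|
=D(\gamma)^{1/2}\,\bigl|O_\gamma(f_{\pi_\datum})\bigr|
\le C\cdot\deg(\pi_\datum)^{1-\nu}.
\]
To convert this into a bound on $\Theta_{\pi}(\gamma)$ itself, I use the standard identities $\Theta_{\pi^\vee}(g)=\Theta_\pi(g^{-1})$, $D(g)=D(g^{-1})$, and $\deg(\pi^\vee)=\deg(\pi)$, together with the fact that the involution $\gamma\mapsto\gamma^{-1}$ preserves $G_{\mathrm{ell}}\cap G_{\mathrm{rs}}$. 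Applying the previous inequality at $\gamma^{-1}$ in place of $\gamma$ therefore gives exactly the bound required by part~(iii) of Conjecture~\ref{c:asymptotic-char} for every $\pi\in\Irr^\Yu(G)$ and every $\gamma\in G_{\mathrm{ell}}\cap G_{\mathrm{rs}}$, with the same exponent $\nu$ and a constant $C_{\mathrm{ell}}=C$ depending only on $G$. (If one prefers to avoid invoking the duality involution, one may instead apply Lemma~\ref{l:sc-coeff} to the dual datum $\datum^\vee$ producing $\pi^\vee$ as a Yu supercuspidal, and argue directly.)

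For the final sentence, the hypotheses of \cite[\S3.4]{Kim07} imply both hypothesis~($\mathrm{H}\ell$) (cf.~Remark~\ref{rem:lower-bound}) and Kim's exhaustion statement $\Irr^\Yu(G)=\Irr^\scusp(G)$ recalled in \S\ref{sub:sc-cpt-ind}. Thus the bound just established for $\pi\in\Irr^\Yu(G)$ automatically covers all $\pi\in\Irr^\scusp(G)$, giving Conjecture~\ref{c:asymptotic-char}(iii) in full. There is no genuine obstacle in this argument beyond Theorem~\ref{t:asymptotic-orb-int}, which is the technical heart of the section; the present theorem is essentially a reformulation via Proposition~\ref{p:char-orb-int}.
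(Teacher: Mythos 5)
Your proposal is correct and follows exactly the same route as the paper: invoke Proposition~\ref{p:char-orb-int} to translate between $O_\gamma(f_{\pi_\Sigma})$ and $\Theta_{\pi_\Sigma^\vee}(\gamma)$ on the elliptic regular set, apply Theorem~\ref{t:asymptotic-orb-int}, and use Kim's exhaustion for the second sentence. The only extra content you supply is the (correct, if somewhat pedantic) bookkeeping to pass from a bound on $\Theta_{\pi^\vee}$ to one on $\Theta_\pi$ via $\Theta_{\pi^\vee}(g)=\Theta_\pi(g^{-1})$, $D(g)=D(g^{-1})$, $\deg(\pi)=\deg(\pi^\vee)$ — the paper treats this as implicit, since $\Irr^{\Yu}(G)$ and $G_{\mathrm{ell}}\cap G_{\mathrm{rs}}$ are both stable under the relevant involutions.
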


\begin{proof}
The first assertion follows from equality~\eqref{e:char-orb-int} and Theorem
\ref{t:asymptotic-orb-int}.
The second is deduced from the first assertion and the exhaustion theorem
of~\cite{Kim07}.
\end{proof}

It is natural to wonder whether the method of this paper may be pushed further to cover
non-elliptic elements. Proposition \ref{p:char-orb-int} is only a special case of
Arthur's formula \cite{Art87} relating supercuspidal character values on non-elliptic
regular elements to weighted orbital integrals of supercuspidal coefficients. (This
extends to cover general discrete series via the local trace formula \cite{Art93}.) So
the problem is to bound such weighted orbital integrals.

A different question concerning trace characters is whether the constant $\nu>0$ can be
found independent of the field $k$ and the residue characteristic $p$.
In this direction we observe that there exist analogues for finite groups, notably a
general estimate by  Gluck~\cite{Gluck:character-sharp}.

It is natural to ask for a common generalization of the families in the depth aspect and
in the ``level aspect". Here, level aspect means that $q_S\to \infty$.
We would consider the multi-set $\cF(\xi,\sigma,K_{S_0})$ where both the finite
set of places $S$ and the discrete series representation $\sigma$ of $G(F_S)$ are varying.
In any non-trivial sequence, the formal degree $\deg(\sigma)$ with respect to the
canonical measure (see \cite{Gro97} or \cite[\S6.6]{ST:Sato-Tate} for definition) goes to
infinity, either because the depth of $\sigma$ goes to infinity or because the residue
characteristic $q_{S}$ goes to infinity.
Theorem~\ref{t:sato-tate} above corresponds to families in the depth aspect for which $S$
is fixed.
Theorem~\ref{th:steinbergs} below corresponds to a refinement of the families in
the level
aspect where
$\sigma=\mathrm{St}_S$.
To establish such a common generalization one would need to address the above question of
uniformity of the constant $\nu>0$, and one also would need to keep track of the
polynomial dependence in the constants $C_{\mathrm{ell}}$ in
Conjecture~\ref{c:asymptotic-char}.

\section{Proof of Proposition \ref{p: power-saving}}\label{s:proof}
In this section we work with a pair $(\bG,\bG^\prime)$ where $\bG^\prime$ is a tamely
ramified twisted Levi subgroup with $\bG^\prime\neq \bG$.
We shall also fix $x\in  \Bd(G^\prime) =
\Bd(\bG^\prime,k)$ and recall that $L_s=G_{x,s}G^{\prime}_{x}$.

We start by recalling some basic definitions.
For a maximal $k$-torus $\bT$ and $\gamma\in T\setminus Z$, the {\it singular depth} of
$\gamma$ is
defined as
\[
sd(\gamma):=\max \{ \val(\alpha(\gamma)-1)\mid\alpha\in\Phi,\ \alpha(\gamma)\ne 1 \},
\]
where $\Phi$ is the
set of $\mathbf T$-roots (see \cite{AK07}). We also define the {\it minimal
depth} of $\gamma$ as
\[
md(\gamma):=\min\{\val(\alpha(\gamma)-1)\mid\alpha\in\Phi,\ \alpha(\gamma)\ne 1 \}.
\]
Both $sd(\gamma)$ and $md(\gamma)$  are independent of the choice of $\mathbf T$
containing $\gamma$, thus are well-defined for every semisimple $\gamma\in
G\setminus Z$. Moreover, we have $md(\gamma)=md(g\gamma
g^{-1})$ and $sd(\gamma)=sd(g\gamma g^{-1})$ for any $g\in G$.

The following is a special case of the decomposition theorem in~\cite{AS08}.
Compared to \cite{AS08}, our situation is simpler and hypothesis
(C) in \cite{AS08} is not needed because we do not keep track of centralizers of
good
elements here.
We include a proof for
completeness.
\begin{lem}\label{lem: good product}
Suppose Hypothesis $\scrT$ holds. Let $\bT$ be a tame maximal
$k$-torus, $\gamma\in T_0\setminus Z_0$ a compact element, and $r=md(\gamma)$.
There exist elements $z$, $\gamma'$, $\gamma_+$, such that
 $\gamma=z\gamma'\gamma_+$, and
\begin{enumerate}[(i)]
\item either $z\in
 Z_0\setminus Z_r$, or $z=1$;
\item $\gamma'\in T_r\setminus Z_rT_{r^+}$ is $\bG$-good of depth $r$;
\item   $\gamma_+\in T_{r^+}$.
\end{enumerate}
\end{lem}
\begin{proof}
We first consider the case $r>0$. Let $b_1=\dth_{\bT}(\gamma)$ so that $\gamma\in
T_{b_1}\setminus T_{b_1^+}$.
By Hypothesis $\scrT$, there is a good element $\gamma_{b_1}\in T_{b_1}$ with
$\gamma_{b_1}T_{b_1^+}=\gamma T_{b_1^+}$, thus $\gamma=\gamma_{b_1}\gamma_+$ for some
$\gamma_+\in T_{b_1^+}$. If $b_1=md(\gamma)$, take $z=1$, $\gamma'=\gamma_{b_1}$.

Otherwise $\gamma_{b_1}\in Z$, and we then let $b_2 =
\dth_{\bT}(\gamma\gamma_{b_1}^{-1})$
so
that $b_2>b_1$ and
$\gamma\gamma_{b_1}^{-1} \in T_{b_2}\setminus T_{b_2^+}$. By Hypothesis $\scrT$, there
is a good element $\gamma_{b_2}$ such that
$\gamma\gamma_{b_1}^{-1} \gamma_{b_2}^{-1}\in T_{b_2^+}$, thus
$\gamma=\gamma_{b_1}\gamma_{b_2}\gamma_+$ for some $\gamma_+\in T_{b_2^+}$. Repeating the
process, one can write $\gamma=\gamma_{b_1}\gamma_{b_2}\cdots\gamma_{b_k}\gamma_+$ where
$b_1<b_2<\cdots<b_k=md(\gamma)$, $\gamma_{b_i}\in Z$, $i=1,\cdots, k-1$ and $\gamma_+\in
T_{b_k^+}$. Then, set $z=\gamma_{b_1}\gamma_{b_2}\cdots\gamma_{b_{k-1}}$,
$\gamma'=\gamma_{b_k}$.

Now suppose $r=0$. By \cite[Prop 2.36]{Spice08}, there is an absolutely semisimple
element $\gamma_{as}\in T_0$ such that $\gamma=\gamma_{as}\gamma_{tu}$ where
$\gamma_{tu}\in T_{0^+}$. If $\gamma_{as}\not\in Z$, take
$z=1$, $\gamma'=\gamma_{as}$ and $\gamma_+=\gamma_{tu}$. If $\gamma_{as}\in Z$,
one proceeds as in the first case for $\gamma_{tu}$ to reach the conclusion.
\end{proof}

\begin{cor}\label{cor:gamma1}
Suppose Hypothesis $\scrT$ holds. Let $\bT$ be a tame maximal
$k$-torus and $\gamma\in T_0\setminus Z_0$.
Then there exists $z\in Z_0$ and $\gamma_1\in T_0\setminus Z_0$ such that
$\gamma=z\gamma_1$ and
\[\dth_{\bT}(\gamma_1) = md(\gamma).
\]
\end{cor}

\begin{proof}
If $md(\gamma)=0$, then we can choose $z=1$ and $\gamma_1=\gamma$.
Suppose $md(\gamma)>0$.
The assertion follows from Lemma~\ref{lem: good product} by setting
$\gamma_1=\gamma'\gamma_+$: Indeed $\gamma'\in
T\setminus
Z$ is $\bG$-good of positive depth
$\dth_{\bT}(\gamma')=md(\gamma')=md(\gamma)$.
Moreover $\dth_{\bT}(\gamma')=\dth_{\bT}(\gamma_1)$, which concludes the proof.
\end{proof}

We will prove Proposition \ref{p: power-saving} at the end of this section by induction
based on the following two propositions. The first, Proposition \ref{p:Kottwitz}, which is
at the
base of the induction, is concerned with orbital integrals of a fixed test function and
proved by means of Shalika germ expansions. The second, Proposition~\ref{lem:
decreasing}, allows us
to proceed inductively in the parameter
$s\in \R_{\ge 0}$.

\begin{prop}\label{p:Kottwitz}
  For each test function $f\in \cH(G)$ there exists a constant $c(f)>0$ such that for
  every semisimple $\gamma\in G$,
  $|\orbi_\gamma(f)|\le c(f)D(\gamma)^{-1/2}$.
\end{prop}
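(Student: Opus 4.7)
The bound is a classical consequence of Harish--Chandra's theory of orbital integrals, and as hinted immediately before the statement, the natural proof proceeds by Shalika germ expansions. Since $f$ has compact support, $O_\gamma(f)$ vanishes whenever the conjugacy class of the semisimple element $\gamma$ fails to meet $\supp(f)$; because $D(\gamma)^{1/2}|O_\gamma(f)|$ is conjugation-invariant, it suffices to bound this function on the relatively compact subset $\supp(f)\cap G_{\mathrm{ss}}$, representing each semisimple class by a point of $\supp(f)$.

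The central input is the local boundedness on $G_{\mathrm{ss}}$ of $\gamma \mapsto D(\gamma)^{1/2}|O_\gamma(f)|$. On the regular semisimple locus this is Harish--Chandra's classical local boundedness theorem for normalized orbital integrals. To extend the bound across a non-regular semisimple $\gamma_0$, I will invoke the Shalika germ expansion
\[
O_{\gamma}(f) \;=\; \sum_{\mathcal{O}} \Gamma_{\mathcal{O}}^{\gamma_0}(\gamma)\, O_{\mathcal{O}}^{G_{\gamma_0}}(f_{\gamma_0}),
\]
valid for regular semisimple $\gamma$ sufficiently close to $\gamma_0$, where $\mathcal{O}$ runs over the nilpotent orbits in $\fkg_{\gamma_0}$ and $f_{\gamma_0}$ denotes a descent of $f$ to the connected reductive centralizer $G_{\gamma_0}$. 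Local boundedness of each normalized germ $D(\gamma)^{1/2}\Gamma_{\mathcal{O}}^{\gamma_0}(\gamma)$ near $\gamma_0$, together with the finiteness of the nilpotent orbital integrals $O_{\mathcal{O}}^{G_{\gamma_0}}(f_{\gamma_0})$ (Ranga~Rao, Deligne, Harish--Chandra), furnishes local boundedness at $\gamma_0$.

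Finally, a finite subcover of the relatively compact set $\supp(f)\cap G_{\mathrm{ss}}$ converts local boundedness into the desired uniform constant $c(f)$, completing the proof. The main technical obstacle is precisely the control at non-regular semisimple $\gamma_0$, which is resolved by the germ expansion above, since the germs absorb the $D(\gamma)^{-1/2}$ singularity while the nilpotent orbital integrals on $G_{\gamma_0}$ are finite and only finitely many of them can contribute for a given $f$.
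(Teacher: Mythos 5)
The paper does not give its own proof: it simply cites \cite[Thm A.1]{ST:Sato-Tate}, the appendix by Kottwitz, so there is no internal argument to compare against. Your outline is the standard route to this classical statement, and the two central ingredients you identify --- local constancy/boundedness of $D^{1/2}|O_\gamma(f)|$ on the regular semisimple set, and control across a non-regular semisimple $\gamma_0$ via the Shalika germ expansion on $\fkg_{\gamma_0}$ with finite nilpotent orbital integrals of the descended function --- are indeed the right ones.

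There is, however, a genuine gap in the final compactness step. The set $\supp(f)\cap G_{\mathrm{ss}}$ is bounded but \emph{not} compact, because $G_{\mathrm{ss}}$ is not closed in $G$: for instance in $\mathrm{SL}_2(\Q_p)$ the regular semisimple matrices $\bigl(\begin{smallmatrix}1&1\\ p^n & 1+p^n\end{smallmatrix}\bigr)$ converge to the non-semisimple unipotent $\bigl(\begin{smallmatrix}1&1\\0&1\end{smallmatrix}\bigr)$. Thus the closure $\overline{\supp(f)\cap G_{\mathrm{ss}}}$ contains points $x$ that are not semisimple, and at such $x$ your local-boundedness input (the germ expansion, which is anchored at a \emph{semisimple} center) does not apply as stated; a "finite subcover of the relatively compact set'' therefore does not close the argument. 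The standard repair is to replace $\supp(f)\cap G_{\mathrm{ss}}$ by compact subsets of Cartan subgroups: fix representatives $T_1,\dots,T_m$ for the finitely many $G$-conjugacy classes of maximal $k$-tori, recall that every semisimple element lies in some $T_i$ up to conjugacy, and invoke Harish-Chandra's lemma that $\omega_i:=\{t\in T_i:\ G\cdot t\cap \supp(f)\neq\emptyset\}$ is relatively compact \emph{in $T_i$} (whose points are all semisimple). Your germ-expansion argument then gives local boundedness of $t\mapsto D(t)^{1/2}|O_t(f)|$ at each $t_0\in\overline{\omega_i}$, and a finite subcover of each compact $\overline{\omega_i}\subset T_i$, together with finiteness of $m$, yields $c(f)$. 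Alternatively one can reduce, at a non-semisimple limit point $x$, to the germ expansion at the semisimple part $x_s$, using that semisimple classes meeting a small neighborhood of $x$ admit representatives near $x_s$; but making this precise amounts to the same Cartan-subgroup compactness statement. With that correction your proof is sound.
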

\begin{proof}
  \cite[Thm A.1]{ST:Sato-Tate}.
\end{proof}

  Recall that $L_s=G_{x,s}G'_x$. The following power-saving bounds are essential ingredients in the proof of Proposition \ref{p: power-saving}.
  Proposition \ref{lem: decreasing} can be understood in comparison with its analogue Corollary~\ref{cor:decreasing2} below, which is easier to grasp and gives
  a bound for the function $\triv_{G_{x,s}}$ in place of $\triv_{L_s}$.

\begin{prop}\label{lem: decreasing} Suppose Hypotheses $\scrT$ and $\scrE$ hold.
Let $\gamma \in G\setminus Z$ be semisimple, and $s\in\mathbb Z_{\ge 2}$.
\begin{enumerate}
\item If $md(\gamma) \le  s-2$, then
$\orbi_\gamma(\triv_{L_{s}})\le\frac1{q}\,\orbi_\gamma(\triv_{L_{s-1}}).$
\item  If $md(\gamma)\ge s+1$, then
$\orbi_\gamma(\triv_{L_{s}})\le
\frac1{q}\,\orbi_\gamma(\triv_{L_{s-1}})+\orbi_\gamma(\triv_{G\rq_{x,s-2}G_{x,s}}).$
\end{enumerate}
\end{prop}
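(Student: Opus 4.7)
The plan is to decompose $L_{s-1}$ as a disjoint union of $L_s$-cosets and analyze the orbital integral fiber by fiber according to the value of $\overline{g^{-1}\gamma g}$ in $L_{s-1}/L_s$. Using hypothesis $(\mathrm{H}\ell)$ together with the Moy--Prasad structure (and the assumption $s\ge 2$), the quotient $L_{s-1}/L_s$ is abelian, and the class of $h^{-1}\delta h$ in it does not depend on $h\in L_{s-1}$. Hence the assignment
\[
\mu\colon \bigl\{[g] \in G_\gamma \backslash G / L_{s-1} : g^{-1}\gamma g \in L_{s-1}\bigr\} \longrightarrow L_{s-1}/L_s,
\quad [g] \mapsto \overline{g^{-1}\gamma g},
\]
is well-defined on double cosets, and one has
\[
O_\gamma(\triv_{L_{s-1}}) = \sum_{[g]} \vol(G_\gamma g L_{s-1}),\qquad
O_\gamma(\triv_{L_s}) = \sum_{[g]\in \mu^{-1}(\bar e)} \vol(G_\gamma g L_{s-1}),
\]
the second equality because the $\bar e$-fiber consists precisely of those double cosets contained in $\{g : g^{-1}\gamma g \in L_s\}$. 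The task reduces to bounding the relative weight of $\mu^{-1}(\bar e)$.

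For case (i) with $md(\gamma) \le s-2$, set $t := s - 1 - md(\gamma) \ge 1$. For $u \in G_{x,t}$ and $\delta := g^{-1}\gamma g$, the Moy--Prasad commutator estimate, together with the conjugation-invariance of $md$, gives $[u,\delta] \in G_{x,t+md(\gamma)} \subset G_{x,s-1}$, so the shift $u\delta u^{-1} = [u,\delta]\,\delta$ moves $\delta$ inside $L_{s-1}$. Passing to the Lie algebra via $\varphi$, the map $u \mapsto \overline{[u,\delta]} \in L_{s-1}/L_s$ becomes a homomorphism induced by $\Ad(\gamma) - 1$ acting on $\fkg_{x,t}/\fkg_{x,t+1}$; by the very definition of $md(\gamma)$, its image has cardinality at least $q$. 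Therefore conjugation by $G_{x,t}$ distributes the double cosets into orbits in each of which at most a fraction $1/q$ satisfies $\mu = \bar e$, yielding (i).

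For case (ii) with $md(\gamma) \ge s+1$, the above shift argument fails because $[u,\delta]$ already lies in $L_s$ for all $u \in G_{x,1}$. We therefore split
\[
O_\gamma(\triv_{L_s}) = O_\gamma(\triv_{G'_{x,s-2}G_{x,s}}) + O_\gamma(\triv_{L_s \setminus G'_{x,s-2}G_{x,s}}),
\]
isolating the stated error term. For the remaining summand we exploit the tame twisted Levi decomposition $\fkg = \fkg' \oplus \fkm$ with $\fkm$ a $\Ad(G')$-invariant complement. The condition $\delta \notin G'_{x,s-2}G_{x,s}$ forces the $\fkm$-component of $\log\delta$ to have depth $\le s-2$ at $x$, and conjugation by elements $u \in G_{x,t}$ whose logarithms lie in the $\fkm$-direction produces a bracket $(\Ad(\delta) - 1)\log u$ whose image in $L_{s-1}/L_s$ once again has cardinality at least $q$, by the same linearization as in case (i). This gives the $1/q$ factor for the second summand and completes (ii).

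The main obstacle is carrying out the commutator estimates with uniform control inside the twisted Levi structure; one must ensure that the filtration $G'_{x,\ast}$ sits inside $G_{x,\ast}$ compatibly with the decomposition $\fkg = \fkg' \oplus \fkm$, which is made possible by the \emph{tameness} of $\bG'$. The hypothesis $(\mathrm{H}\ell)$ reduces the whole argument to a linear-algebraic statement about $\Ad(\gamma) - 1$ on successive graded pieces of the Moy--Prasad filtration, and the dichotomy between $md(\gamma) \le s-2$ and $md(\gamma) \ge s+1$ reflects precisely the depth at which this linear operator sees $\gamma$ on the quotient relevant to $L_{s-1}/L_s$.
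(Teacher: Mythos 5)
Your overall shape---a shift argument that shows at most a $1/q$-fraction of the relevant volume survives passage from $L_{s-1}$ to $L_s$, combined with a linearization via $(\mathrm{H}\ell)$---is the right idea and is what the paper does. However, several of your intermediate steps do not hold, and one of them is the crux of the matter.

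First, your setup requires $L_s$ to be normal in $L_{s-1}$: both the assertion that ``the class of $h^{-1}\delta h$ in $L_{s-1}/L_s$ does not depend on $h\in L_{s-1}$'' and the identity $O_\gamma(\triv_{L_s}) = \sum_{[g]\in\mu^{-1}(\bar e)}\vol(G_\gamma g L_{s-1})$ presuppose that the whole double coset $G_\gamma g L_{s-1}$ lies in $\psi_\gamma^{-1}(L_s)$ once one point does. But $L_s = G_{x,s}G'_{[x]}$ is \emph{not} normal in $L_{s-1}=G_{x,s-1}G'_{[x]}$: for $g\in G_{x,s-1}$ and $m\in G'_{[x]}$ of depth zero, $gmg^{-1}=m[m^{-1},g]$ with $[m^{-1},g]\in G_{x,s-1}$ but generally not in $G_{x,s}$. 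The paper avoids this by decomposing $L_s\cap\Orb(\gamma)$ (not $L_{s-1}$) into cosets of the much smaller subgroup $G'_{x,s-1}G_{x,s}$, which behaves correctly under the shift.

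Second, in case (i) you take a single shift group $G_{x,t}$ with $t = s-1-md(\gamma)$ and assert $[u,\delta]\in G_{x,t+md(\gamma)}$. This is not a consequence of the Moy--Prasad estimate \eqref{eq:commutator}: that estimate gives $[u,\delta]\in G_{x,t+\dth}$ where $\dth$ is the \emph{Moy--Prasad depth of $\delta$ modulo centre}, which is generally unrelated to $md(\gamma)$. (If $\delta$ has depth $0$ and some root satisfies $|\alpha(\delta)-1|=1$, conjugation by $\delta$ does not increase depth at all in that direction.) The paper's proof uses a coset-dependent shift group $G_{x,s-\dth_i-1}$, where $\dth_i:=\dth_x(\delta_i)$ is the depth of the coset representative $\delta_i$ relative to $Z_G G'_{x,\cdot}G_{x,s}$; it is exactly this dependence on $i$ that makes the commutator land in $G_{x,s-1}$.

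Third, and most substantially: the claim that the image of $u\mapsto \overline{[u,\delta]}$ in $G_{x,s-1}/(G'_{x,s-1}G_{x,s})$ has cardinality $\ge q$ ``by the very definition of $md(\gamma)$'' is not true by definition; it is the content of Lemma~\ref{lem: normalize}, which in turn rests on the group-theoretic Lemma~\ref{lem: orth} about twisted Levi subgroups. After linearizing, one must show that $\fkm + \fkg_X$ (resp.\ $\fkm+\fkg_\delta$) is a \emph{proper} subspace of $\fkg$ whenever $X$ (resp.\ $\delta$) is noncentral; this requires the root-string argument over the finite residue field and is where the twisted Levi structure is actually used. You invoke it only in case (ii) and only descriptively. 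Without this lemma, there is no reason for the image to have size $\ge q$.

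So while the high-level strategy (shift and linearize, handle the deep-coset tail separately in case (ii)) is correct, the double-coset fibering is unsound, the shift depth must depend on the coset representative rather than on $md(\gamma)$, and the lower bound on the image size is a nontrivial lemma, not a formality.
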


The proof of Proposition \ref{lem: decreasing} is postponed until we establish a
handful of technical lemmas.
Lemma \ref{lem: orth} is used in the proof of Lemma \ref{lem: normalize}.

\begin{lem}\label{lem: orth} Let $\sfG$ be a connected reductive group over a perfect
field $\bbF$.
Suppose $\sfM$ is a twisted Levi subgroup of $\sfG$.
Let $\mathsf g$ and $\mathsf m$ be the Lie algebras of $\sfG(\bbF)$ and $\sfM(\bbF)$
respectively.
\begin{enumerate}[(i)]
\item
For $X\in \mathsf m$, let ${\mathsf g}_X:=\{Y\in\mathsf g\mid [X,Y]=0\}$.
If $X\not\in Z_{\mathsf g}$, then $\mathsf m + {\mathsf g}_X$ is a proper $\F$-subspace
of $\mathsf g$.
\item
For $\delta\in \mathsf M$, let ${\mathsf g}_\delta:=\{Y\in\mathsf g\mid
\Ad(\delta)(Y)=Y\}$.
If $\delta\not\in Z_{\mathsf G}$, then $\mathsf m + {\mathsf g}_\delta$ is a proper
$\F$-subspace of $\mathsf g$.
\end{enumerate}
\end{lem}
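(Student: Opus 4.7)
The plan is to exploit a nondegenerate $\sfG$-invariant symmetric bilinear form $B$ on $\mathsf g$, which is available under the good-characteristic hypotheses implicit in $(\mathrm{H}\ell)$. Since $\sfM$ becomes a Levi subgroup of $\sfG$ after base change to $\overline{\bbF}_q$, the form $B$ restricts nondegenerately to $\mathsf m$, yielding an orthogonal decomposition $\mathsf g=\mathsf m\oplus\mathsf m^{\perp}$ in which $\mathsf m^{\perp}$ is stable under $\ad(\mathsf m)$ and under $\Ad(\sfM)$.

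For part~(i), the operator $\ad(X)$ is $B$-skew, so $\mathsf g_X^{\perp}=[\mathsf g,X]$; combining this with the decomposition above gives
\[
(\mathsf m+\mathsf g_X)^{\perp} \;=\; \mathsf m^{\perp}\cap[\mathsf g,X] \;=\; [\mathsf m^{\perp},X],
\]
where the second equality uses $[\mathsf m,X]\subseteq\mathsf m$ (since $X\in\mathsf m$) and $[\mathsf m^{\perp},X]\subseteq\mathsf m^{\perp}$. It therefore suffices to show that $[X,\mathsf m^{\perp}]\neq 0$ whenever $X\in\mathsf m\setminus Z_{\mathsf g}$.

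To establish this, I would write the Jordan decomposition $X=X_s+X_n$ in $\mathsf m$ and note that $\ad(X_s)|_{\mathsf m^{\perp}}$ and $\ad(X_n)|_{\mathsf m^{\perp}}$ are the semisimple and nilpotent parts of $\ad(X)|_{\mathsf m^{\perp}}$, so their joint vanishing forces each to vanish. Extending scalars to $\overline{\bbF}_q$ and $\sfM$-conjugating $X_s$ into a maximal torus $\mathsf t\subseteq\mathsf m$, the operator $\ad(X_s)$ acts on each root space $\mathsf g_{\alpha}$ by multiplication by $\alpha(X_s)$; vanishing on $\mathsf m^{\perp}$ thus means $\alpha(X_s)=0$ for every root $\alpha$ in the complement of $\Phi_{\sfM}$ inside the root system $\Phi$ of $\sfG$. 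Combined with the structural fact that these characters jointly detect $\mathsf t/(\mathsf t\cap Z_{\mathsf g})$, this forces $X_s\in Z_{\mathsf g}$. An analogous root-theoretic computation—bracketing $X_n$ against a well-chosen root vector outside $\Phi_{\sfM}$—forces $X_n=0$, and altogether $X\in Z_{\mathsf g}$, contradicting the hypothesis.

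Part~(ii) runs in parallel, with $\Ad(\delta)-\mathrm{id}$ in place of $\ad(X)$: the $\Ad(\sfG)$-invariance of $B$ yields $\mathsf g_{\delta}^{\perp}=(\Ad(\delta)-\mathrm{id})(\mathsf g)$, and the same orthogonal-complement calculation reduces (ii) to showing that $\delta\in\sfM$ acts trivially on $\mathsf m^{\perp}$ only when $\delta\in Z_{\sfG}$. The Jordan decomposition $\delta=\delta_s\delta_u$ propagates the analysis through a torus-and-root-subgroup argument analogous to the Lie algebra case. The main obstacle I expect is this last step: certifying that the system $\Phi\setminus\Phi_{\sfM}$ is rich enough to witness noncentrality. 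This is subtle when $\sfG$ has more than one almost simple factor, and one uses that in Yu's setting $\sfG'=\bG^{d-1}$ arises from a tame twisted Levi sequence subject to the genericity hypotheses of Definition~\ref{defn: generic G-datum}, which exclude the degenerate product configurations that would otherwise obstruct the conclusion.
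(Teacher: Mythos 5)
Your proposal takes a genuinely different route from the paper's. You pass to an orthogonal complement $\mathsf m^{\perp}$ with respect to a nondegenerate $\Ad$-invariant form, use $\mathsf g_X^{\perp}=[\mathsf g,X]$ to reduce to showing $[X,\mathsf m^{\perp}]\neq 0$, and then analyze $\ad(X_s)$ and $\ad(X_n)$ separately on $\mathsf m^{\perp}$. The paper works directly with the quotient $(\underline{\mathsf g}/\underline{\mathsf g}_X)/(\underline{\mathsf m}/(\underline{\mathsf m}\cap\underline{\mathsf g}_X))$: after scalar extension it reduces \emph{wlog} to $\underline{\mathsf m}$ a maximal proper Levi (so $\Delta=\Delta_{\sfM}\cup\{\beta\}$) and propagates $\alpha(X_s)=0$ (resp. $[\underline{\mathsf g}_{\alpha},X_n]=0$) from $\beta$ across adjacent simple roots using root strings and Humphreys Prop.\ 8.4, invoking connectedness of the Dynkin diagram. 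Your reduction via orthogonality is clean and conceptually transparent, but it introduces a hypothesis (existence of a nondegenerate $\sfG$-invariant symmetric form on $\mathsf g$ over $\bbF_q$) that is not literally contained in $(\mathrm{H}\ell)$, which is a statement about a quasi-logarithm on the $p$-adic group $\bG$, not about the mod-$p$ Lie algebra; you should say explicitly what you are assuming and why it holds for $p$ large.

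There are two concrete gaps. First, the final root-theoretic step — that the roots in $\Phi\setminus\Phi_{\sfM}$ jointly detect noncentrality, and that bracketing against root vectors outside $\Phi_{\sfM}$ kills $X_n$ — is exactly where the work is, and you leave it as an assertion. The paper spells this out (reduce to maximal proper Levi, then induct along adjacent simple roots using that $\beta+\alpha_1\in\Phi^+\setminus\Phi^+_{\sfM}$), handling the semisimple and nilpotent parts separately; your sketch compresses this into a sentence and in particular gives no argument for the nilpotent case beyond ``an analogous computation.'' Second, your treatment of the multi-factor issue is the wrong fix at the wrong level. You are correct that the propagation argument requires the Dynkin diagram to be connected (the paper's proof says ``Since the Dynkin diagram is connected, inductively, ...''), and that a noncentral $X\in\mathsf m$ sitting entirely in a simple factor on which $\sfM$ is full is problematic. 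But Lemma~\ref{lem: orth} is stated abstractly for an arbitrary twisted Levi $\sfM\subset\sfG$; appealing to the genericity hypotheses of Definition~\ref{defn: generic G-datum} (which constrain the \emph{character} $\phi_{d-1}$, not the element $\delta$ or $X$ you are testing) neither repairs the general statement nor identifies which almost simple factor carries the noncentrality. If the lemma really needs a reducedness/simplicity or ``noncentral in the $\beta$-factor'' hypothesis, that needs to be built into the statement, not patched after the fact by pointing at Yu's datum.
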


\begin{proof}
(i) Let $X=X_{ss}+X_n$ be the Jordan decomposition of $X$ with $[X_{ss},X_n]=0$
and
$X_{ss}$ (resp. $X_n$) semisimple (resp. $X_n$ nilpotent).
Since the lemma may be proved after taking a finite extension of $\bbF$, we may assume that
 $\sfM$ (thus also $\mathsf G$) and $X_{ss}$ split over $\bbF$ and that $\sfM$ is a maximal proper Levi subgroup of $\mathsf G$.
Thus ${\mathsf m}$ is a maximal proper Levi subalgebra of ${\mathsf g}$. Without loss
of
generality, we may assume that the Dynkin diagram of $\mathsf G$ is connected.

Let $\sfT$ be a maximal $\bbF$-split torus in $\sfM$ whose Lie algebra contains $X_{ss}$.
Let $\Phi$ be the set of $\sfT$-roots.
Let $\Delta$ (resp. $\Phi^+$) be
the set of simple roots (resp. the set of positive roots) associated with $\sfT$ such that
$X_n\in\sum_{\alpha\in\Phi^+}{\mathsf g}_\alpha$, where $\mathsf g_\alpha$ is the root space of
$\alpha$ in $\mathsf
g$.
Let $\Delta_{\sfM}$ (resp. $\Phi_{\sfM}^+$) be the subset of $\Delta$ (resp. $\Phi^+$)
associated with $\sfM$. Let ${\sfU}$ and ${\sfU}^-$ be the unipotent and the opposite
unipotent subgroup respectively. Let $\beta\in\Delta$ such that
$\Delta=\Delta_{\sfM}\cup\{\beta\}$.

We will prove assertion (i) by showing its contrapositive that $\mathsf g = \mathsf m
+ \mathsf g_X$ implies that $X\in Z_{\mathsf g}$.
Assuming $\mathsf g = \mathsf m + \mathsf g_X$, we have that $\mathsf
g_\beta\subset {\mathsf u}\subset{\mathsf g}_X$.
We proceed in two steps, first showing that $X_{ss}$ is central, and then that
$X_n=0$.

We have  ${\mathsf u}\subset{\mathsf g}_{X_{ss}}$
since $[X_{ss},X_n]=0$ and $X_n$ is nilpotent. In particular, $\beta(X_{ss})=0$. Let
$\alpha_1\in\Delta_{\sfM}$ adjacent to $\beta$ in the Dynkin diagram. Then,
$\beta+\alpha_1\in\Phi^+\setminus\Phi^+_{\sfM}$ by \cite[Prop 8.4]{Hum78} (this follows
from the results about root strings) and $0\neq{\mathsf
g}_{\alpha_1+\beta}\subset{\mathsf u}$. Then, $(\beta+\alpha_1)(X_{ss})=0$, hence
$\alpha_1(X_{ss})=0$ and ${\mathsf g}_{\alpha_1}\subset{\mathsf g}_{X_{ss}}$. Similarly if
$\alpha_2\neq\beta$ is adjacent to $\alpha_1$,
$\beta+\alpha_1+\alpha_2\in\Phi^+\setminus\Phi_{\sfM}^+$ and ${\mathsf
g}_{\alpha_2}\subset{\mathsf g}_{X_{ss}}$. Since the Dynkin diagram is connected,
inductively, we conclude ${\mathsf g}_\alpha\subset{\mathsf g}_{X_{ss}}$ for all
$\alpha\in\Delta$. Thus, ${\mathsf m}\subset{\mathsf g}_{X_{ss}}$ and ${\mathsf
g}_{X_{ss}}={\mathsf g}$, therefore $X_{ss}$ is central.

\smallskip

Without loss of generality, we may assume that $X_{ss}=0$, thus $X=X_n$. Write
$X=\sum_{\alpha\in\Phi^+} X_\alpha$ with $X_\alpha\in{\mathsf g}_\alpha$. Fix $\alpha'\in\Phi$. Let $\Delta$ be a simple root system with $\alpha'\in\Delta$. Let $\Delta_M$, $\beta$, $\mathsf U$ and $\mathsf U^-$ be as in the previous case.

Let $\alpha_1, \cdots,\alpha_j=\alpha'$ be distinct simple roots in $\Delta_\sfM$ such that
$\beta$ and $\alpha_1$ (resp. $\alpha_i$ and $\alpha_{i+1}$) are adjacent to each other
in the Dynkin diagram. Then, we have the following:

(a) $\beta+\alpha_1+\cdots+\alpha_i\in\Phi^+\setminus\Phi^+_{\sfM}$ for $i\le j$. This
follows from $\beta\not\in\Delta_{\sfM}$ and by inductively applying \cite[Prop
8.4]{Hum78} to the root string.

(b) $[{\mathsf g}_{\beta+\alpha_1+\cdots+\alpha_i}, X]=0$ for $i\le j$. This follows
from  ${\mathsf g}_{\beta+\alpha_1+\cdots+\alpha_i}\subset\mathsf u\subset\mathsf g_X$.

(c) $[{\mathsf g}_{\beta+\alpha_1+\cdots+\alpha_{i-1}}, X]=0$ implies that
$[{\mathsf g}_{\beta+\alpha_1+\cdots+\alpha_{i-1}}, X_{\alpha_i}]=0$. Then,
$X_{\alpha_i}=0$ since $\beta+\alpha_1+\cdots+\alpha_i\neq0$. Note that we are using the assumption that $p$ is large enough (e.g. $p$ is large enough so that it does not divide any structural constants of $\mathsf g$, which is necessary for $\scrE$-(iii)).

By (a), (b) and (c), $X_{\alpha'}=0$ since the Dynkin diagram is connected. As
$\alpha'$ runs over $\Phi$, we have $X=0$, which is what we wanted, completing the
proof of (i).

\medskip

Assertion (ii) can be proved similarly as in (i) using the Jordan decomposition
$\delta=\delta_{ss}
\delta_n$ in $\mathsf M$.
\end{proof}

We recall the notation $[g,h]:=ghg^{-1}h^{-1}$ for $g,h\in G$.
We will frequently use the fact \cite[(2.6)]{MP94} that for $a,b\in \R_{\ge 0}$,
\begin{equation}\label{eq:commutator}
 \mbox{if}\quad g\in G_{x,a},~h\in G_{x,b} \quad \mbox{then} \quad [g,h]\in G_{x,a+b}.
\end{equation}
It is also useful for us to introduce functions $\mathfrak d_x:G_{x,0}\setminus \{1\}
\rightarrow
\mathbb
R_{\ge0}$ and $\mathfrak d_x^Z:G_{x,0}\setminus Z_0 \rightarrow \mathbb R_{\ge0}$ as follows:
\begin{align*}
&\mathfrak d_x(g):=\max\{r\in\mathbb R_{\ge0}\mid g\in G_{x,r}\}, \\
&\mathfrak d_x^Z(g):=\max\{r\in\mathbb R_{\ge0}\mid g\in ZG_{x,r}\}.
\end{align*}
Clearly $\mathfrak d_x(g)\le \mathfrak d_x^Z(g)$.
For $s\in \R_{\ge 0}$ and $\delta\in
L_{s}$, define
\[
\dth_{x,s}(\delta):=\max\{t\in [0,s] \mid \delta\in
ZG'_{x,t}G_{x,s}\}.
\]
Clearly $\mathfrak d_x(g)=\mathfrak d_x(g^{-1})$, $\mathfrak d_x^Z(g)=\mathfrak
d_x^Z(g^{-1})$, and $\dth_{x,s}(\delta)=\dth_{x,s}(\delta^{-1})$.
It is also clear that $\dth_{x,s}(\delta)\le \mathfrak d_x^Z(\delta)$, since $G'_{x,t}\subset
G_{x,t}$.

\begin{lem} \label{Ldxle}
Suppose Hypotheses $\scrT$ and $\scrE$ hold.
For every $s\in \R_{>0}$ and semisimple element $\delta\in L_s\cap G_0 \setminus Z_0$,
\[
 \min(\mathfrak d^Z_x(\delta),s)
\le
\dth_{x,s}(\delta) \le md(\delta).
\]
In particular, for a semisimple element $\delta\in L_s\cap G_0 \setminus Z_0G_{x,s^+}$,
we
have $\mathfrak d^Z_x(\delta)
=
\dth_{x,s}(\delta)$.
\end{lem}

\begin{proof}
Let $r\le s$.
Since $G'_x\cap G_{x,r}=G'_{x,r}$ (see e.g., \cite[\S4]{AS08}), we have
$G'_{x}G_{x,s}\cap
G_{x,r} =
G'_{x,r}G_{x,s}$.
The condition $\delta\in G_{x,r}$ thus implies $\delta\in
G'_{x,r}G_{x,s}$. Then $\delta\in Z G_{x,r}$ implies $\delta\in
ZG'_{x,r}G_{x,s}$.
In other words $\min(\mathfrak d^Z_x(\delta),s)\le \dth_{x,s}(\delta)$, which is the first
inequality.

By Hypothesis $\scrE$, there exists a tame maximal $k$-torus $\bT$ containing $\delta$.
It follows from Corollary \ref{cor:gamma1} that
\[
\max\{r\in\mathbb R_{\ge0}\mid \gamma\in ZT_r\} = md(\delta).
\]
The second inequality then follows from the definitions since
$G'_{x,t}G_{x,s}\subset G_{x,t}$, and $G_{x,t}\cap T\subset T_t$.

The last assertion is a corollary of the first inequality because $\delta\not\in
Z_0G_{x,s^+}$ is equivalent to $\mathfrak d_x^Z(\delta)\le s$.
\end{proof}

\begin{lem}\label{lem: normalize} Suppose Hypotheses $\scrT$ and $\scrE$ hold. Let
$s\in\bbZ_{\ge2}$ and $\delta\in L_s$ be such that
$\dth:=\dth_{x,s}(\delta)<s-1$.
Define the map
\[
C_{\delta}:G_{x,s-\dth-1}\left/G_{x,s-\dth}\right.\rightarrow
G_{x,s-1}\left/(G'_{x,s-1}G_{x,s})\right.
\]
by $C_{\delta}(g):=[\delta^{-1},g]\pmod{G'_{x,s-1}G_{x,s}}$. Then
the cardinality of the image of $C_{\delta}$ is at least $q$.
\end{lem}

\begin{proof}
The last assertion of Lemma~\ref{Ldxle} implies that $\mathfrak d^Z_x(\delta)=  \dth$.
Thus the map $C_{\delta}$ is well-defined in view of \eqref{eq:commutator}.
Since $C_{\delta}$ is unchanged if $\delta$ is multiplied by a central element, we may
assume without loss of generality that $\delta\in G_{x,\dth}$.

Let $E$ be a tamely ramified finite extension of $k$ that satisfies Hypothesis $\scrE$, and
fix a uniformizer $\varpi_E$ of $E$.
Let $m$ be the ramification index of $E$ over $k$. Hence $\val (\varpi_E)=1/m$, in
view of
our normalization of valuation and
filtration index in \S\ref{sub:notation Bd}.
Since the order of $x$ divides $m$ by $\scrE$.(ii), we have $\blieG(E)_{x,
t/m}=\varpi_E^t\blieG(E)_{x,0}$ for
$t\in\bbZ$ and there is no break for non-integral values of $t$.
Moreover,
$\dth\in\frac1m\Z$.
Let $i\in \Z_{>0}$ be such that
\[
\frac{i}{m}=s-\dth-1.
\]
We now divide the proof into the following four steps (1-4):

\smallskip

(1)   Write
$$\overline{\bG(E)}_{x,i/m}:=\bG(E)_{x,i/m}\left/\bG(E)_{x,(i+1)/m}\right.;\quad
\overline{\blieG(E)}_{x,i/m}:=\blieG(E)_{x,i/m}\left/\blieG(E)_{x,(i+1)/m}\right..$$
Similarly, we define $\overline G_{x,i/m}$, %
$\overline{\mathfrak g}_{x,i/m}$, $\overline{\bG'(E)}_{x,i/m}$,
$\overline{\blieG'(E)}_{x,i/m}$, $\overline{G'}_{x,i/m}$ and $\overline{\mathfrak
g'}_{x,i/m}$.
Since $i>0$,  we have abelian group isomorphisms
 $$
 \overline{\bG(E)}_{x,i/m}\simeq\overline{\blieG(E)}_{x,i/m}
 \quad\mbox{and}\quad\overline{\bG'(E)}_{x,i/m}\simeq\overline{\blieG'(E)}_{x,i/m}
 $$

\smallskip

(2) $\overline{\blieG(E)}_{x,i/m}$ and $\overline{\blieG'(E)}_{x,i/m}$ are vector spaces
over the residue field ${\mathbb E}$ of the maximal unramified extension $E^u$ of
$k$ in $E$.
Let $\mathbb F=\mathbb F_q$ be the residue field of $k$.
Let $\sfG$ (resp. $\sfG'$) be a reductive group defined over $\mathbb F$ which is the
reductive quotient at the building point $x$ of $\bG$ (resp. $\bG'$) given by
Bruhat-Tits theory.
Write $\mathsf g$ and $\mathsf g'$ for the Lie algebras of $\sfG(\mathbb F)$ and $\sfG'(\mathbb
F)$
respectively.
We have $\overline{\blieG(E)}_{x,0}\simeq\mathsf
 g(\mathbb E)$.
We claim $\dim_E \blieG(E)=\dim_{\mathbb E}\mathsf g(\mathbb E)$. This
equality
can be established from \cite[\S4.2]{DeBacker:parametrization}. More precisely, we
have the isomorphisms of $\mathbb E$-vector spaces
$$\overline{\blieG(E)}_x:=\oplus_{r\in\mathbb R/\frac1m\mathbb
Z}\blieG(E)_{x,r}/\blieG(E)_{x,r^+}=\overline{\blieG(E)}_{x,0},$$ where the first equality
is a definition given in \emph{loc. cit.}
Note that the range $r\in\mathbb R/\frac1m\mathbb Z$ comes from our normalization which
is such that
$\val(\varpi_E)=1/m$, and hence
$\varpi_E\blieG(E)_{x,r}=\blieG(E)_{x,r+\frac1m}$ as recalled above, so the second
equality is clear since all breaks occur when $r\in \frac1m \Z$.
Then, by the
result in \emph{loc.cit.}, we have $\dim_E\blieG(E)=\dim_E\overline{\blieG(E)}_x$, which
establishes the claim.

\smallskip

(3) If $\dth=0$, then $\frac im=s-1$, and the map $g\mapsto [\delta^{-1},g]$ induces
$\overline{\bG(E)}_{x,i/m}\rightarrow \overline{\bG(E)}_{x,i/m}$ thus a linear map
$\mathsf g(\bbE) \rightarrow \mathsf g(\bbE)$. Moreover, it maps
$\overline G_{x,i/m}\subset\overline{\bG(E)}_{x,i/m} $ into itself since $\delta^{-1}$
lies in $G_x$ (rather than $\bG(E)_x$). Its composition with the projection $\mathsf
g(\bbE) \ra \mathsf g(\bbE)\left/\mathsf g'(\bbE)\right.$ is seen to be equal to the map
$$\overline C_\delta:\mathsf g(\bbE)\ra \mathsf g(\bbE)\left/\mathsf
g'(\bbE)\right.\simeq \blieG(E)_{x,i/m}/({\blieG'(E)}_{x,i/m}+{\blieG(E)}_{x,(i+1)/m})$$
induced by the map $Y\mapsto \Ad(\delta^{-1})(Y)-Y$ from $\blieG(E)_{x,i/m}$ to
$\blieG(E)_{x,i/m}$. Equivalently, $\overline C_\delta$ is induced by the map $Y\mapsto
\Ad(\ol\delta^{-1})(Y)-Y$ from $\mathsf g(\bbE)$ to itself, where
\[
\ol\delta\in
G_{x,0}/G_{x,0^+}\subset\ol{\mathbf G(E)}_{x,0}\simeq \sfG(\bbE)
\]
is the image of
$\delta$.

\smallskip

(4)
If $\dth>0$, then $\frac im<s-1$. Write $X\in \mathfrak g_{x,\dth}$ for the element whose
exponential is
$\delta$. Then the map $g\mapsto \delta^{-1}g\delta g^{-1}$ induces a map
$\overline{\bG(E)}_{x,i/m}\rightarrow \overline{\bG(E)}_{x,s-1}$ and thus a
linear map $\mathsf g(\bbE) \rightarrow \mathsf g(\bbE)$ via (2). Composed with the
projection $\mathsf g(\bbE) \ra \mathsf g(\bbE)\left/\mathsf g'(\bbE)\right.$, this map
is equal to the map
$$\overline C_\delta:\mathsf g(\bbE)\ra \mathsf g(\bbE)\left/\mathsf
g'(\bbE)\right.\simeq \blieG(E)_{x,i/m}\left/({\blieG'(E)}_{x,s-1}+{\blieG(E)}_{x,s-1})\right.$$
induced by the map $Y\mapsto [Y,X]$ from $\blieG(E)_{x,i/m}$ to
$\blieG(E)_{x,s-1}$ (a proof of this assertion is similar to that of
Proposition~\ref{p:hypotheses}). Let $\overline X$ denote the image of $X$ under the
isomorphism $\overline{\blieG(E)}_{x,\dth}\simeq\mathsf g(\bbE)$ in (2). Then $\overline
X\in \mathsf g(\bbE)$. We see that $\overline C_\delta$ is given by $\overline
C_\delta(\overline Y)=[\overline Y,\overline X]$ modulo $\mathsf g'(\bbE)$ for each $Y\in
\mathsf g(\bbE)$.

In summary, we have a series of maps such that the following diagram commutes:
\begin{equation}\tag{$\dag$}
\xymatrix{
\overline{\bG'(E)}_{x,\frac{i}{m}}  \ar[r]^-{\sim} \ar@{^(->}[d] &
\overline{\blieG'(E)}_{x,\frac{i}{m}}  \ar[r]^-{\sim} \ar@{^(->}[d] & \mathsf g'(\mathbb E)
\ar@{^(->}[d] \\
\overline{\bG(E)}_{x,\frac{i}{m}} \ar[r]^-{\sim} \ar[d]^-{[\delta^{-1},~]} &
\overline{\blieG(E)}_{x,\frac{i}{m}} \ar[r]^-{\sim}   \ar[d]^-{\mathrm{ad}(X)-1} &
\mathsf g(\mathbb E) \ar[d]\\
\overline{\bG(E)}_{x,s-1}  \ar[r]^-{\sim} &
\overline{\blieG(E)}_{x,s-1}  \ar[r]^-{\sim} & \mathsf g(\mathbb E)
}
\end{equation}
Here the three horizontal isomorphisms on the right are chosen to make the
diagram commute.

\medskip

Consider the map $\overline C_\delta:\mathsf g(\bbE)
\ra \mathsf g(\bbE)/\mathsf g'(\bbE)$.
Since $\delta\notin ZG_{x,\dth^+}$ (because $\mathfrak d_x^Z(\delta)=\dth$), we see that
$\ol\delta$ is not in the center of
$\sfG(\bbE)$ when $\dth=0$ and similarly $\ol X$ is not in the center of $\mathsf g(\mathbb
E)$ when
$\dth>0$.
(Indeed $\delta\not\in \bZ(E)\bG(E)_{x,\dth^+}$, hence $X\not\in \boldsymbol{\mathfrak z}(E) +
\mathfrak
\blieG(E)_{x,\dth^+}$ in
view of
Hypothesis $\scrE$.(iii), therefore the image of $X$ in
$\overline{\mathfrak\blieG(E)}_{x,\dth}$ is noncentral).
We apply Lemma~\ref{lem: orth} with $\sfM=\sfG'$,
which yields $\dim_{\mathbb E}\left(\mathrm{Im}(\overline C_\delta)\right)\ge1$.

In both cases (either $\dth=0$ or $\dth>0$), the map $\overline C_\delta$ sends the
rational subspace $\mathsf g$ into the rational subspace $\mathsf g/\mathsf
g'$.
Via identification given in the diagram ($\dag$), after taking Galois invariants
of $\bbE$-vector spaces, we see that the dimension of $\mathrm{Im}(\overline
C_\delta)\cap\mathsf
g \pmod{\mathsf g'(\bbE)}$ over $\bbF$ is at least one.
Indeed for each component in the above diagram, we have
$\overline{\mathbf H(k)}_{x,r}\hookrightarrow\overline{\mathbf H(E)}_{x,r}$ where $\mathbf H=\bG, \bG'$
and $r=\frac im, s-1$, and observe also that the horizontal isomorphisms on the
right
can be also chosen such that $\overline{\mathbf H(k)}_{x,r}\hookrightarrow\overline{\mathbf
H(E)}_{x,r}$ induces $\mathsf h \hookrightarrow \mathsf h(\mathbb E)$ where
$\mathsf
h=\mathsf g, \mathsf g'$.
Lifting this fact to
$C_\delta$, we have $\sharp\left(\mathrm{Im}(C_\delta)\right)\ge q$.
\end{proof}

\begin{proof}[{Proof of Proposition \ref{lem: decreasing}}]
Without loss of generality, we may assume that $\gamma\in L_{s}$.
Let $\psi_\gamma:G\rightarrow G$ be given by $\psi_\gamma(g):=g\gamma g^{-1}$.
Then, $\psi_\gamma(G)=\orb(\gamma)$.
Moreover $g\in \psi_\gamma^{-1}(L_{s})$ (resp.
$g\in\psi_\gamma^{-1}(L_{s-1})$) if and only if $[\gamma^{-1}, g]\in L_{s}$ (resp.
$[\gamma^{-1}, g]\in L_{s-1}$).

We consider the disjoint decomposition
\[
L_s\cap\orb(\gamma)=\bigsqcup_{i=1}^n\delta_i G'_{x,s-1}G_{x,s}\cap \orb(\gamma)
\]
 for some $\delta_1,\cdots, \delta_n\in L_s\cap\orb(\gamma)$.
Set $\dth_i:=\dth_{x,s}(\delta_i)=\dth_{x,s}(\delta_i^{-1})$.

\medskip

\noindent
{\it Case (i).} $md(\gamma) \le s-2$. Then $\dth_i\le s-2$ by the second
inequality of Lemma~\ref{Ldxle}.
Furthermore, by the first inequality of Lemma~\ref{Ldxle}, $\delta_i, \delta_i^{-1}\in
Z G_{x,\dth_i}$.

Put $V_{i,s}:=\psi_\gamma^{-1}(\delta_i G'_{x,s-1}G_{x,s})$. We have that
$V_{i,s}\subset \psi_\gamma^{-1}(L_s)\subset \psi_\gamma^{-1}(L_{s-1})$.
 We claim that for any $u\in G_{x,s-\dth_i-1}$,
\[u V_{i,s}\subset \psi_\gamma^{-1}(L_{s-1}).\]
Indeed for $v\in V_{i,s}$ we have
\[
\tilde v:= \psi_\gamma(v)= v\gamma v^{-1}\in \delta_i G'_{x,s-1}G_{x,s},
\]
 so in particular $\tilde v\in L_{s}\cap Z G_{x,\dth_i}$.
Hence $[\tilde v^{-1},u]\in G_{x,s-1}$ by \eqref{eq:commutator}, and therefore
\[
\psi_\gamma(uv)=uv \gamma v^{-1}u^{-1}=\tilde v[\tilde v^{-1},u]  \in L_{s-1},
\]
 verifying the claim.

Next we want to show that the sets $G_{x,s-\dth_i-1} V_{i,s}$ are disjoint for $1\le i
\le n$.
Indeed suppose that there exist $u\in G_{x,s-\dth_i-1}$, $u'\in G_{x,s-\dth_{i'}-1}$,
$v\in V_{i,s}$ and $v'\in V_{i',s}$ such
that $uv=u'v'$.
As before $\tilde v:=v\gamma v^{-1}\in \delta_i G'_{x,s-1}G_{x,s}$ so we may write
$\tilde v=\delta_i g$ for $g\in G'_{x,s-1}G_{x,s}\subset G_{x,s-1}$. Then
 $$uv\gamma (uv)^{-1}=(u \delta_i u^{-1})(u g
 u^{-1})=(\delta_i[\delta_i^{-1},u])(g[g^{-1},u]).$$
 Again $[\delta_i^{-1},u],[g^{-1},u]\in G_{x,s-1}$. Hence
\[
\psi_\gamma(uv)=uv\gamma (uv)^{-1}\in \delta_i G_{x,s-1}.
\]
The same reasoning shows that $u'v'\gamma (u'v')^{-1}\in \delta_{i'} G_{x,s-1}$.
Since $uv=u'v'$, it implies that $\delta_i \equiv \delta_{i'}\pmod{ G_{x,s-1}}$.
This is promoted to $\delta_i \equiv
\delta_{i'}\pmod{G'_{x,s-1}G_{x,s}}$ thanks to the fact that $\delta_i,\delta_{i'}\in
L_s$, thus $i=i'$, verifying the
disjointness.

Define a map $C_{\delta_{i}}$ as follows:
\[
C_{\delta_{i}}:G_{x,s-\dth_i-1}\left/G_{x,s-\dth_i}\right.\rightarrow
G_{x,s-1}\left/(G'_{x,s-1}G_{x,s})\right.
\]
given by $C_{\delta_{i}}(g):=[\delta_{i}^{-1},g]\pmod{G'_{x,s-1}G_{x,s}}$.
By Lemma \ref{lem: normalize},
for each $i$ there exist $q$ elements $u_{i1}, u_{i2},\cdots,u_{iq}\in G_{x,s-\dth_i-1}$
such that $C_{\delta_{i}}(u_{ij})$ are distinct,
$j=1,\cdots,q$.

As consequence of the above claim we have
$$
\psi_\gamma^{-1}(L_{s-1})\supset
\bigcup\limits_{i=1}^n \bigcup\limits_{j=1}^q  u_{ij}V_{i,s}.
$$
To finish the proof of (i) it is enough to prove that the terms on the right hand side
are mutually disjoint. Indeed, if for each open compact subset $U\subset G$ we write
$\vol_{G/G_\gamma}(U)$ to denote the volume of the image of $U$ in $G/G_\gamma$ then we
will have
 $$ \orbi_\gamma(\triv_{L_{s-1}})
 =\vol_{G/G_\gamma}(\psi_\gamma^{-1}(L_{s-1}))
 \ge \sum_{i=1}^n \sum_{j=1}^q\vol_{G/G_\gamma}(u_{ij}V_{i,s})$$
 $$= q  \sum_{i=1}^n \vol_{G/G_\gamma}(V_{i,s})=q \orbi_\gamma(\triv_{L_s}).$$

Since the sets $G_{x,s-\dth_i-1} V_{i,s}$ are disjoint, it only remains to show that
$u_{ij}V_{i,s}$ and $u_{ij'}V_{i,s}$ are
disjoint for $j\neq j'$ and $1\le i \le n$. Suppose $u_{ij}V_{i,s}\cap
u_{ij'}V_{i,s}\neq\emptyset$ for some $i,j,j'$.
There are $v, v'\in V_{i,s}$ such that $u_{ij}v=u_{ij'}v'$.
As before, $v\gamma v^{-1}=\delta_ig$ for some $g\in G'_{x,s-1}G_{x,s}$.
Hence,
$$u_{ij}v\gamma (u_{ij}v)^{-1}=(u_{ij} \delta_i u_{ij}^{-1})(u_{ij} g
u_{ij}^{-1})=(\delta_i[\delta_i^{-1},u_{ij}])(g[g^{-1},u_{ij}]).$$
Since $\dth_i\le s-2$, we apply \eqref{eq:commutator} to obtain $[g^{-1},u_{ij}]\in
G_{x,2s-\dth_i-2}\subset G_{x,s}$.
Thus the term $ u_{ij}v\gamma (u_{ij}v)^{-1} $ above belongs to $\delta_i
C_{\delta_i}(u_{ij}) G'_{x,s-1}G_{x,s}$.
Since $u_{ij}v = u_{ij'}v'$, we deduce similarly that it also belongs to $\delta_i
C_{\delta_i}(u_{ij'}) G'_{x,s-1}G_{x,s}$. This implies $C_{\delta_i}(u_{ij}) =
C_{\delta_i}(u_{ij'})$,
hence $j=j'$.

\medskip

\noindent
{\it Case (ii). $md(\gamma)\ge s+1$.}

\smallskip

Observe that $\orb(\gamma)\cap Z G_{x,s}\neq\emptyset$. This follows from combining
Corollary~\ref{cor:gamma1} and the fact that for any torus $\bT$, there exists $g\in
G$ with
$ZT_{s+1}\subset ZG_{gx,s}$. Hence, one can assume $\gamma\in Z G_{x,s}$. Without
loss of generality, we assume that $\gamma\in G_{x,s}$.

Write $\gamma=z\gamma_1$ with
$z\in Z$ and $\gamma_1\in T_0\setminus Z_0$ with $\dth_{\bT}(\gamma_1)=md(\gamma)$ as in
Corollary~\ref{cor:gamma1}. Then, since
\[
\min(\dth_{\bT}(z), md(\gamma_1))=\dth_{\bT}(\gamma)\ge\mathfrak d_x(\gamma)\ge s,
\]
 we
have $\dth_{\bT}(z)\ge s$, hence $z\in Z_s\subset G_{x,s}$.

If $\dth_i=\dth_{x,s}(\delta_i)\ge s-2$, we have $\delta_i\in Z G'_{x,s-2}G_{x,s}$. We
claim that $\delta_i\in G'_{x,s-2}G_{x,s}$.
To prove this claim, write $\delta_i=z\, ^h\!\gamma_1$ for some $h\in G$. Since $z\in
G_{x,s}$, it is enough to show that $^h\!\gamma_1\in G'_{x,s-2}G_{x,s}$. Suppose
$^h\!\gamma_1\not\in G'_{x,s-2}G_{x,s}$, that is, $\mathfrak d_x(\,^h\!\gamma_1)<s-2$.
Then, since $\dth_{x,s}(\,^h\!\gamma_1)\ge s-2$, there is $z'\in Z$ of depth $\mathfrak
d_x(\, ^h\!\gamma_1)$ such that $z'\,^h\!\gamma_1\in G'_{x,s-2}G_{x,s}$. However,
$\dth_{^h\!\bT}(z'\,^h\!\gamma_1)=\dth_{^h\!\bT}(z')=\mathfrak d_x(\, ^h\!\gamma_1)<s-2$,
hence $z'\,^h\!\gamma_1\not\in G_{x,s-2}$, which is a contradiction. Hence, the claim
follows.

Now we can arrange the decomposition such that $\dth_i <  s-2$ for $1\le i \le n'$ and
$\delta_i\in  G'_{x,s-2}G_{x,s}$ for $n'+1\le i\le n$:
\[
L_{s}\cap\orb(\gamma)=
\bigsqcup\limits_{i=1}^{n'}\left(\delta_i G'_{x,s-1}G_{x,s}\cap \orb(\gamma)\right)
\bigcup
\left(G'_{x,s-2}G_{x,s}\cap \orb(\gamma)\right).
\]
For $1 \le i \le n'$, choose $u_i$ and define $V_{i,s}$ as in Case (i).
Then,
$\psi_\gamma^{-1}(L_{s-1})$ contains
$$
\bigcup_{i=1}^{n'}\bigcup_{j=1}^{q} u_{ij} V_{i,s}$$
and the summands are mutually disjoint by a similar argument to Case (i). Arguing as in
Case (i) but keeping in mind that $G'_{x,s-2}G_{x,s}\cap \orb(\gamma)$ accounts for
$\orbi_\gamma(\triv_{G\rq_{x,s-2}G_{x,s}})$, we complete the proof of Case (ii) as
follows.
 $$ \orbi_\gamma(\triv_{L_{s-1}})\ge q  \sum_{i=1}^{n'} \vol_{G/G_\gamma}(V_{i,s})\ge q
 \left(\orbi_\gamma(\triv_{L_s})-\orbi_\gamma(\triv_{G\rq_{x,s-2}G_{x,s}})\right).\vspace{-.25in}$$
\end{proof}

In preparation for the end of the proof of Proposition~\ref{p: power-saving} we shall
need a final bound from Corollary \ref{cor:decreasing2} below.
\begin{lem}\label{lem:orbital-Lie} Assume that the
isomorphisms of Hypothesis $\scrE$.(iii) are $G$-equivariant.
 Let $s\in \R_{\ge 1}$, and $\gamma\in G_{x,s}$. Write $X:=\exp^{-1}(\gamma)$. Then
  $$\orbi^G_\gamma(\triv_{G_{x,s}})=\orbi^{\fkg}_X(\triv_{\fkg_{x,s}}).$$
\end{lem}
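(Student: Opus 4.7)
The plan is to unfold the two orbital integrals as volumes on $G_\gamma \backslash G$ and $G_X\backslash G$ respectively, then use hypothesis $(\mathrm{H}\ell)$ to match the integrands pointwise and the domains of integration. Concretely, with compatible Haar measures,
\[
O^G_\gamma(\triv_{G_{x,s}}) = \int_{G_\gamma\backslash G} \triv_{G_{x,s}}(g^{-1}\gamma g)\, dg,
\qquad
O^{\mathfrak g}_X(\triv_{\mathfrak g_{x,s}}) = \int_{G_X\backslash G} \triv_{\mathfrak g_{x,s}}(\mathrm{Ad}(g^{-1})X)\, dg,
\]
so it suffices to check two things: that the centralizers $G_\gamma$ and $G_X$ coincide as subgroups of $G$, and that the conditions $g^{-1}\gamma g \in G_{x,s}$ and $\mathrm{Ad}(g^{-1})X \in \mathfrak g_{x,s}$ cut out the same subset of $G$. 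Since $s \ge 1 > 0$, all relevant elements live in the domain $G_{0^+}$ of the map $\varphi$ supplied by $(\mathrm{H}\ell)$, and we interpret $X = \varphi(\gamma)$.

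First I would handle the integrand. By the $\mathrm{Ad}$-equivariance of $\varphi$,
\[
\varphi(g^{-1}\gamma g) = \mathrm{Ad}(g^{-1})\varphi(\gamma) = \mathrm{Ad}(g^{-1})X.
\]
Moreover, $\varphi$ preserves the Moy--Prasad filtrations, so $\varphi(G_{x,s}) = \mathfrak g_{x,s}$ for $s > 0$. Combining these, $g^{-1}\gamma g \in G_{x,s}$ if and only if $\mathrm{Ad}(g^{-1})X \in \mathfrak g_{x,s}$, which identifies the two integrands as functions of $g$.

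Next I would identify the centralizers. Because $\varphi$ is an $\mathrm{Ad}$-equivariant homeomorphism, for any $g \in G$ the relation $g\gamma g^{-1} = \gamma$ is equivalent to $\mathrm{Ad}(g)X = X$, so $Z_G(\gamma) = Z_G(X)$, and the connected centralizers agree as well. Choosing the same Haar measure on this common group, the integrals over $G_\gamma\backslash G$ and $G_X\backslash G$ are literally the same, and the desired equality follows.

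I do not anticipate a serious obstacle: the lemma is essentially a formal consequence of $(\mathrm{H}\ell)$, once one is careful to read $\exp^{-1}$ as $\varphi$. The only subtlety worth flagging is ensuring $\gamma$ and $X$ lie inside the respective domains where $\varphi$ and its filtration-preserving property are defined, which is guaranteed by $\gamma \in G_{x,s} \subset G_{x,0^+} \subset G_{0^+}$, and the corresponding inclusion on the Lie algebra side.
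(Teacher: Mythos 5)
Your proof is correct and takes essentially the same route as the paper: identify the integrands via the $\mathrm{Ad}$-equivariance and filtration-preservation of the map $\varphi$ from $(\mathrm{H}\ell)$, and identify the centralizers $G_\gamma=G_X$ via the same properties. The paper phrases the orbital integrals as sums over double cosets $I_\gamma\backslash G/K$ rather than integrals over $G_\gamma\backslash G$, and simply asserts $I_\gamma=I_X$ and the matching of indicator values without spelling out the reduction to $(\mathrm{H}\ell)$, but those are presentational differences, not mathematical ones; your explicit justification of both ingredients is a harmless elaboration.
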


\begin{proof}
We are going to follow the idea as in the proof of Theorem 3.2.3 of \cite{Fer07}.
  Let $G_\gamma$ (resp. $G_X$) denote the connected centralizer of $\gamma$ (resp. $X$)
  in $G$.  Write $K:=G_{x,s}$ and $\fkk:=\fkg_{x,s}$. By the definition of orbital
  integrals,
  $$\orbi^G_\gamma(\triv_{K})=\sum_{g\in G_\gamma\bs G/K} \vol(G_\gamma\bs G_\gamma g
  K)\triv_K(g^{-1}\gamma g),$$
  $$\orbi^{\fkg}_X(\triv_{\fkk})=\sum_{g\in G_X\bs G/K} \vol(G_X\bs G_X g
  K)\triv_{\fkk}(g^{-1}X g).$$
  Since $G_\gamma=G_X$ and $\triv_K(g^{-1}\gamma g)=\triv_{\fkk}(g^{-1}X g)$, the
  equality in the lemma holds.
\end{proof}

\begin{cor}\label{cor:decreasing2} Assume
that the
isomorphisms of Hypothesis $\scrE$.(iii) are $G$-equivariant.
There is a constant $C>0$ depending only on $G$ such that the following holds:
for every
$s\in \R_{\ge 0}$, and
every semisimple $\gamma\in G\setminus Z$,
\[
|\orbi_\gamma(\triv_{G_{x,s}})|\le C\cdot q^{-s}\cdot D(\gamma)^{-1/2}.
\]
\end{cor}

\begin{proof}

In view of Proposition~\ref{p:Kottwitz}, we may assume without loss of generality that
$s\in \R_{\ge 2}$. It is enough to prove the bound for $s\in \Z_{\ge 2}$. Indeed, if the
corollary
is known for $s\in \Z_{\ge 2}$ then
the corollary holds for $s\le t<s+1$ at the expense of increasing the constant:
$$|\orbi_\gamma(\triv_{G_{x,t}})|\le |\orbi_\gamma(\triv_{G_{x,s}})|\le C\cdot q^{-s}
D(\gamma)^{-1/2}\le (C\cdot q)q^{-t}D(\gamma)^{-1/2}.$$
For the rest of this proof, $s\in \Z_{\ge 2}$ and $\gamma= \exp(X) \in G_{x,s}$. Then
\[
\orbi^G_\gamma(\triv_{G_{x,s}})=\orbi^{\fkg}_X(\triv_{\fkg_{x,s}})
=\orbi^{\fkg}_{\varpi^{1-s}X}(\triv_{\fkg_{x,1}})
=\orbi^G_{\exp(\varpi^{1-s}X)}(\triv_{G_{x,1}}).
\]
 Here the first and third equalities are from Lemma \ref{lem:orbital-Lie} and the
 second
 follows from a direct computation. Applying Proposition \ref{p:Kottwitz} to
 $f=\triv_{G_{x,1}}$ we obtain
   $$\orbi^G_{\exp(\varpi^{1-s}X)}(\triv_{G_{x,1}})\le C\cdot
  D(\exp(\varpi^{1-s}X))^{-1/2}= C\cdot q^{\frac{s-1}{2}(\dim G_\gamma - \dim
  G)}D(\gamma)^{-\frac12}\le C\cdot q^{1-s}D(\gamma)^{-\frac12},$$
  where the constant $C=c(\triv_{G_{x,1}})\in \R_{>0}$ depends only on $G$ and $x$ which
  had been
  fixed since the beginning of this section.

 We can arrange that $c(\triv_{G_{x,1}})$ depend only on the $G$-orbit
 of
  the facet containing~$x$: If $x'=gx$ with $g\in G$ then $G_{x',1}=g G_{x,1} g^{-1}$ so
  $\triv_{G_{x,1}}$ and $\triv_{G_{x',1}}$ have the same orbital integral on each
  conjugacy class.
 There are only finitely many $G$-orbits of facets, so the lemma holds true for a
 constant $C>0$ depending only on $G$.
\end{proof}

\proof[Proof of Proposition \ref{p: power-saving}]
As in the proof of Corollary \ref{cor:decreasing2}, once we prove the proposition
for a fixed $x$, the proposition also holds when $x$ varies by a similar
finiteness argument.
So we may and will keep $x$ fixed in this proof.

Let
$ a_s := D(\gamma)^{\frac12} \orbi_\gamma(\triv_{L_s}), $
which is a decreasing function of $s\in \R_{\ge 0}$.
We want to show that $a_s \le C_1\cdot q^{-s}$.
It is sufficient to verify the inequality for $s\in \Z_{\ge 0}$ at the expense of
replacing $C_1$ by $qC_1$.
Applying Proposition \ref{p:Kottwitz} to $f=\triv_{L_0}$ we have that $a_1\le a_0 \le
c(\triv_{L_0})$.

Set $m$ to be the largest integer such that $m\le md(\gamma)$.
Proposition~\ref{lem: decreasing}.(ii) combined with Corollary~\ref{cor:decreasing2}
provides us with the recursive inequality
(replacing $C$ by $q^2 C$)
  $$
a_s \le \frac1q a_{s-1}+Cq^{-s},\quad  2\le s\le m-1.
$$
  This implies
that for any $2 \le s \le m-1$,
\begin{align*}
a_s &\le q^{1-s} a_1 + C
\left(
q^{-s} + \frac{q^{1-s}}{q} + \cdots + \frac{q^{-1}}{q^{s-1}}
\right)
\le  q^{1-s} c(\triv_{L_0}) + C s q^{-s}.
\end{align*}
We have in particular
\[
a_{m+1} \le a_m \le a_{m-1} \le \left(   c(\triv_{L_0})q +  Cm \right)  q^{1-m}.
\]
Next, Proposition~\ref{lem: decreasing}.(i) shows that for $ s \ge m+2$, we have the
inequality $a_s\le \frac1q a_{s-1}$. Hence
\[
a_s \le q^{m+1-s} a_{m+1} \le  \left(   c(\triv_{L_0}) q +  Cm \right)  q^{2-s}.
\]
 Proposition~\ref{p: power-saving} is verified with $C_1:=(c(\triv_{L_0}) q+C)q^2$, which
 is indeed a constant depending only on $G$.
  \qed

\section{Automorphic Plancherel equidistribution with error terms}\label{s:plancherel}

  In this section our asymptotic formula for supercuspidal characters and orbital integrals is applied to produce an equidistribution
theorem for a family of automorphic representations. The theorem can be informally summarized as follows: Consider the set of $L^2$-discrete
automorphic representations with supercuspidals at a fixed finite place (suitably weighted). As the formal degree of the supercuspidal at
the fixed place moves toward infinity, the local components (away from the fixed place) of the automorphic representations are
equidistributed with respect to the Plancherel measure. We prove the theorem in the case where a suitable condition at infinite places
simplifies the trace formula so that the technical difficulties with general terms in the trace formula do not blur the close relationship
between the asymptotic formula for supercuspidal characters and the equidistribution.

\subsection{Preliminaries}\label{sub:prelim-unitary-dual}

In the rest of the article the following global setup will be in effect.
Let $G$ be a connected reductive group over a totally real number field $F$. (We used the symbol $G$ differently in the preceding sections.) Put $F_\infty:=F\otimes_\Q \R$. Write $\cA_{\disc}(G)$ for the set of discrete automorphic representations of $G(\A_F)$ up to isomorphism (i.e. without multiplicity). The automorphic multiplicity of $\pi\in \cA_{\disc}(G)$ is denoted $m_\disc(\pi)$. Let $S$ be a nonempty finite set of finite places of $F$. Fix a Haar measure $\mu_S$ on $G(F_S)$. Recall that the unitary dual $G(F_S)^\wedge$ is equipped with a positive Borel measure $\pl_S$, the Plancherel measure.

  Write $\Ram(G)$ for the set of finite places $v$ of $F$ such that $G$ is ramified over $F_v$. For each finite place $v\notin \Ram(G)$ let $K_v^\hs$ be a hyperspecial subgroup of $G(F_v)$. We choose $K_v^\hs$ such that at all but finitely many $v\notin \Ram(G)$, the group $K_v^\hs$ consists of the $\cO_{F_v}$-points of some reductive integral model of $G$ over $\cO_F[1/N]$ for a sufficiently large integer $N$.

  Let $v$ be a place of $F$.
Write $\mu^{\can}_v$ for the canonical measure on $G(F_v)$ (denoted by $L(M^\vee(1))\cdot |\omega_{G}|$ in \cite{Gro97}), and if $G(F_v)$ has compact center, denote by $\mu_v^{\EP}$ the Euler-Poincar\'e measure on $G(F_v)$, cf.
\cite[\S5, \S7]{Gro97}.
Assuming $G(F_\infty)$ has compact center, put $\mu^{\EP}_\infty:=\prod_{v|\infty} \mu^{\EP}_v$.
Similarly $\mu^{\EP}_S:=\prod_{v\in S} \mu^{\EP}_v$ and $\mu^{\can,\Sigma}:=\prod_{v\notin \Sigma} \mu^{\can}_v$.
When $G$ is unramified over $F_v$ it is known that $\mu^{\can}_v$ assigns volume 1 to hyperspecial subgroups.
From \S\ref{sub:counting-measures} on we will fix a finite set of places $S$ and consider the (possibly negative) measure
  $$\mu^{\can,\EP}:=\left(\prod_{v\notin S\cup S_\infty} \mu_v^{\can}\right)\mu^{\EP}_S \mu^{\EP}_\infty.$$
  (This is different from the convention of \cite{ST:Sato-Tate}; there we used $\mu^{\can}_v$ at all finite places. Also note that $\mu^{\can,\EP}$ depends on the set $S$.) Define the volume of the adelic quotient $$\tau'(G,S):=\mu^{\can,\EP}(G(F)\bs G(\A_F)),$$
  relative to the counting measure on the discrete subgroup $G(F)$.
  This volume is finite if $G(F_\infty)$ has compact center, which will always be the case by part (iii) of the assumptions made in the next subsection.\footnote{To have finite volume in general, one has to take a further quotient of $G(F)\bs G(\A_F)$ by the $\R$-split part of the center of $G(F_\infty)$.}
The Tamagawa volume of $G(F)\bs G(\A_F)$ is denoted by $\tau(G)$.

\subsection{The simple trace formula}\label{sub:simple-TF}

  As the trace formula is going to play a central role in the proof, we recall some basic facts.
  Let $T_{\mathrm{ell}},T_{\disc}:\cH(G(\A_F))\ra \C$ designate the invariant distributions consisting of contributions from the elliptic conjugacy classes and the discrete automorphic spectrum, respectively. Arthur's trace formula is the equality of the two invariant distributions $I_{\geom}$ and $I_{\spec}$ on the geometric and spectral sides. In general $I_{\geom}$ (resp. $I_{\spec}$) is the sum of $T_{\mathrm{ell}}$ (resp. $T_{\disc}$) and other very complicated terms, but we will always be in the situation where the simple trace formula applies, i.e. $T_{\mathrm{ell}}=I_{\geom}=I_{\spec}=T_{\disc}$.

Suppose that $\phi\in \cH(G(\A_F))$ admits a decomposition
   $\phi=\phi^{S\cup S_\infty}\phi_S \phi_\infty$ according to $G(\A_F)=G(\A_F^{S\cup S_\infty})G(F_S) G(F_\infty)$ such that
   $\phi_\infty$ is an Euler-Poincar\'e function on $G(F_\infty)$ as in \cite[Thm 3.(ii)]{CD90} up to a nonzero scalar.
   For the rest of the paper we make the following overarching assumptions.
  \begin{enumerate}
    \item $G(F_S)$ has compact center, %
    \item the function $\phi_S$ is cuspidal in the sense that orbital integrals vanish on non-elliptic regular semisimple conjugacy classes of $G(F_S)$, and
  \item $G(F_\infty)$ contains a compact maximal torus.
\end{enumerate}
   By (iii), the real group $G(F_\infty)$ admits discrete series spectrum and the function $\phi_\infty$ is nonzero. Condition (ii) is equivalent to the condition that the trace of any (fully) induced representation from any proper parabolic
subgroups vanishes against $\phi_S$. Typical examples of such $\phi_S$ are matrix coefficients of supercuspidal representations
(\S\ref{s:orbital-integrals}) and Kottwitz's Euler-Poincar\'e functions, cf. \S\ref{s:Steinberg} below.

   For a semisimple $\gamma\in G(F)$ write $G_\gamma$ for its centralizer and $I_\gamma$ for the neutral component of $G_\gamma$. Put
$\iota(\gamma):=[G_\gamma(F): I_\gamma(F)]\in \Z_{\ge 1}$. Let $\mu_{G(\A_F)}$ (resp. $\mu_{I_\gamma(\A_F)}$) denote a Haar measure on $G(\A_F)$ and $I_\gamma(\A_F)$, respectively. The elliptic part of the trace formula is the expansion
  \begin{equation}\label{e:Igeom}
    T_{\mathrm{ell}}(\phi,\mu_{G(\A_F)}):=\sum_{\gamma\in G(F)/\sim\atop \textrm{elliptic}} \iota(\gamma)^{-1}\mu_{I_\gamma}(I_\gamma) \orbi_\gamma(\phi,\mu_{G(\A_F)}/\mu_{I_\gamma(\A_F)}),
  \end{equation}
    where the sum runs over the set of $F$-elliptic conjugacy classes in $G(F)$, and $\mu_{I_\gamma}(I_\gamma)$ is the volume of $I_\gamma(F)\bs I_\gamma(\A_F)$ for the quotient measure of $\mu_{I_\gamma(\A_F)}$. (Note that $I_\gamma(F_\infty)$ has compact center by ellipticity of $\gamma$ and assumption (iii) above.) The discrete part of the trace formula is
  \begin{equation}\label{e:Ispec}
    T_{\disc}(\phi,\mu_{G(\A_F)}):=\sum_{\pi\in \cA_{\disc}(G)}m_\disc(\pi)\tr \pi(\phi,\mu_{G(\A_F)}),
  \end{equation}
  where $m_\disc(\pi)$ denotes the multiplicity of $\pi$ in the discrete automorphic spectrum.
  Under the above hypotheses Arthur~\cite[Cor 7.3, Cor 7.4]{Art88b} provides us with the simple trace formula
  \begin{equation}\label{e:simple-TF}
  T_{\mathrm{ell}}(\phi,\mu_{G(\A_F)})=T_\disc(\phi,\mu_{G(\A_F)}).\end{equation}
  Indeed the assumption at $v_1$ (resp. at $v_1$ and $v_2$) in Corollary 7.3 (resp. 7.4) of that paper is satisfied by any $v_1\in S$ (resp. any $v_1\in S$ and any $v_2\in S_\infty$) by (ii) and (iii) above. Here we use the property of Euler-Poincar\'e functions~\cite[p.270, p.281]{Art89} that their orbital integrals vanish outside elliptic conjugacy classes.

\subsection{Counting measures for automorphic representations}\label{sub:counting-measures}

  Let $G$ be a connected reductive group over a totally real field $F$ as in the preceding subsection.
 Let $S_0$, $S$, $\fkS$, $\xi$, $\Pi_\infty(\xi)$, $K_{S_0}$, and $K^\fkS$ be as in the introduction. (We allow $S_0$ to be empty.) Throughout this section $G$ is assumed to be unramified away from the finite set of places $\fkS$. This is always ensured by increasing the set $S_0$ if necessary.
  We make the following additional hypothesis, which is technically helpful as it was in \cite{ST:Sato-Tate}.
\bit
\item The highest weight of $\xi$ is regular.
\eit
 Write $\Irr^{\Yu}(G(F_S))$ for the set of $\sigma_S=\otimes_{v\in S} \sigma_v$ such that $\sigma_v\in \Irr^{\Yu}(G(F_v))$. Given $\sigma_S\in  \Irr^{\Yu}(G(F_S))$,
 define $$\cF=\cF(\xi,\sigma_S,K_{S_0})$$
 to be the multi-set of $\pi\in \cA_{\disc}(G)$ whose multiplicity is zero unless $\pi^\fkS$ is unramified, $\pi_S\simeq \sigma_S$, and $\pi_\infty\in \Pi_\infty(\xi)$,  in which case the multiplicity of $\pi$ is
 $$a_{\cF}(\pi):=m_{\disc}(\pi)\dim (\pi_{S_0})^{K_{S_0}}.$$%
    By Harish-Chandra's finiteness theorem, $a_{\cF}(\pi)\neq 0$ only for finitely many $\pi$.
    We may replace $m_{\disc}(\pi)$ by the multiplicity in the cuspidal spectrum since every automorphic representation with a supercuspidal component (or with $\pi_\infty$ in discrete series) is cuspidal.
    For each $\sigma_\infty\in \Pi_\infty(\xi)$ let $f_{\sigma_\infty}$ be a pseudo-coefficient for $\sigma_\infty$. Set
    $$f_\xi:=\sum_{\sigma_\infty\in \Pi_\infty(\xi)} f_{\sigma_\infty}.$$ Then $\tr \pi_\infty(f_\xi)\neq 0$ if and only if $\pi_\infty\in \Pi_\infty(\xi)$, in which case the trace equals $1$. (The only if part follows from the results of Vogan--Zuckerman on Lie algebra cohomology.) Moreover $f_{\xi}$ is an Euler-Poincar\'e function up to a nonzero constant, cf. \cite[Lem 3.2]{Kot92b}.
    Write $f_{\sigma_S}\in C^\infty_c(G(F_S))$ for the product of explicit supercuspidal coefficients $f_{\sigma_v}$ (see \S\ref{sub:explicit-coeff}).

\begin{lem}\label{l:LHS=T_ell} Let $\phi^\fkS\in \cH^{\ur}(G(\A_F^\fkS))$. Put $\phi:=\phi^\fkS f_{\sigma_S} \triv_{K_{S_0}} f_\xi$. Then
  $$\sum_{\pi\in \cF(\xi,\sigma_S,K_{S_0})}
\tr \pi^\fkS\left(\phi^\fkS, \mu^{\can,\fkS}\right)=\mu^{\can}_{S_0}(K_{S_0})^{-1}T_\elp(\phi,\mu^{\can,\EP}).$$
\end{lem}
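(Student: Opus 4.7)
The plan is to apply the simple trace formula~\eqref{e:simple-TF} to the test function $\phi=\phi^\fkS f_{\sigma_S}\triv_{K_{S_0}} f_\xi$ with respect to the measure $\mu^{\can,\EP}$, and then to match the resulting $T_\disc(\phi,\mu^{\can,\EP})$ with the left-hand side of the claim, up to the factor $\mu^{\can}_{S_0}(K_{S_0})$.

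First I would verify the hypotheses of the simple trace formula in \S\ref{sub:simple-TF}. The archimedean component $f_\xi$ is, up to a nonzero scalar, a sum of Euler--Poincar\'e pseudo-coefficients for the discrete series $L$-packet $\Pi_\infty(\xi)$, and $f_{\sigma_S}$ is a pseudo-coefficient of the supercuspidal $\sigma_S$, hence is cuspidal in the sense that its orbital integrals vanish on every non-elliptic regular semisimple class of $G(F_S)$. The regularity of $\xi$ ensures that $G(F_\infty)$ has a compact maximal torus, giving condition (iii). Together with the supercuspidal cuspidality condition (ii) at $S$, Arthur's simple trace formula \cite[Cor.~7.3]{Art88b} applies, and I obtain $T_\elp(\phi,\mu^{\can,\EP})=T_\disc(\phi,\mu^{\can,\EP})$.

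Next I would expand $T_\disc(\phi,\mu^{\can,\EP})$ using~\eqref{e:Ispec} and factorize each trace along the decomposition $G(\A_F)=G(\A_F^\fkS)\times G(F_{S_0})\times G(F_S)\times G(F_\infty)$, which matches the factorization $\mu^{\can,\EP}=\mu^{\can,\fkS}\,\mu^{\can}_{S_0}\,\mu^{\EP}_S\,\mu^{\EP}_\infty$. At infinity, the results of Vogan--Zuckerman and the construction recalled in \S\ref{sub:counting-measures} give $\tr\pi_\infty(f_\xi,\mu^{\EP}_\infty)=\mathbf{1}_{\pi_\infty\in\Pi_\infty(\xi)}$. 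At $S$, the pseudo-coefficient property yields $\tr\pi_S(f_{\sigma_S},\mu^{\EP}_S)=\delta_{\pi_S\simeq\sigma_S}$. At $S_0$, the standard identity for the characteristic function of an open compact subgroup gives
\[
\tr\pi_{S_0}(\triv_{K_{S_0}},\mu^{\can}_{S_0})=\mu^{\can}_{S_0}(K_{S_0})\,\dim\pi_{S_0}^{K_{S_0}},
\]
which simultaneously produces the factor $\mu^{\can}_{S_0}(K_{S_0})$ that will be inverted and the weight $\dim\pi_{S_0}^{K_{S_0}}$ that, together with $m_\disc(\pi)$, assembles into the automorphic multiplicity $a_\cF(\pi)$ of the multi-set $\cF$. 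Finally, away from $\fkS$, the factor $\tr\pi^\fkS(\phi^\fkS,\mu^{\can,\fkS})$ vanishes unless $\pi^\fkS$ is unramified, since $\phi^\fkS$ lies in the spherical Hecke algebra $\cH^{\ur}(G(\A_F^\fkS))$.

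Combining these factorizations collapses the spectral expansion to a sum over $\pi\in\cA_{\disc}(G)$ satisfying $\pi^\fkS$ unramified, $\pi_S\simeq\sigma_S$, and $\pi_\infty\in\Pi_\infty(\xi)$, i.e.\ exactly the indexing set of $\cF(\xi,\sigma_S,K_{S_0})$, weighted by $a_\cF(\pi)\,\mu^{\can}_{S_0}(K_{S_0})$. Dividing by $\mu^{\can}_{S_0}(K_{S_0})$ and substituting $T_\disc=T_\elp$ from the simple trace formula yields the asserted identity. The only delicate point I anticipate is ensuring that the simple trace formula is applicable when $G$ does not have compact center globally; this is handled, as indicated in Remark~\ref{r:non-cpt-center}, by invoking Arthur's variant with a fixed central character on a suitable closed central subgroup, using that Euler--Poincar\'e and pseudo-coefficient functions have orbital integrals supported on elliptic classes \cite[p.~270, p.~281]{Art89}.
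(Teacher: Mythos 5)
Your proof is correct and follows essentially the same route as the paper: verify the hypotheses of the simple trace formula (cuspidality of $f_{\sigma_S}$ at $S$, Euler--Poincar\'e function at infinity, compact maximal torus), expand $T_\disc(\phi,\mu^{\can,\EP})$ by factorizing the trace over places, and identify each local factor (trivial at $\pi_\infty\in\Pi_\infty(\xi)$, $\delta_{\pi_S\simeq\sigma_S}$ at $S$, $\mu^{\can}_{S_0}(K_{S_0})\dim\pi_{S_0}^{K_{S_0}}$ at $S_0$, unramified support away from $\fkS$). The only minor imprecision is the phrase ``the pseudo-coefficient property yields $\tr\pi_S(f_{\sigma_S})=\delta_{\pi_S\simeq\sigma_S}$'': a generic pseudo-coefficient only satisfies this for tempered $\pi_S$, but here the claim holds for all irreducible $\pi_S$ because $f_{\sigma_S}$ is the explicit matrix coefficient of Lemma~\ref{l:sc-coeff}(i), which is what the paper uses; this is worth citing precisely since some $\pi\in\cA_\disc(G)$ may have non-tempered local components.
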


\begin{proof}
  It follows from the definition that the left hand side equals
$$\sum_{\pi\in \cA_{\disc}(G)} m_{\disc}(\pi) \tr \pi^\fkS(\phi^\fkS)\tr \pi_S(f_{\sigma_S}) \frac{\tr \pi_{S_0}(\triv_{K_{S_0}})}{\mu^{\can}_{S_0}(K_{S_0})} \tr \pi_\infty(f_\xi),$$
which is none other than $\mu^{\can}_{S_0}(K_{S_0})^{-1}T_{\disc}(\phi)$. We conclude by \eqref{e:simple-TF}.

\end{proof}

\subsection{Bounds on the geometric side}\label{sub:bounds}

  Here we recollect various bounds on the terms appearing on the geometric side, mostly from \cite{ST:Sato-Tate}.
Given each semisimple element $\gamma\in G(F)$, fix a maximal torus $T_\gamma$ in $G$ over $\ol{F}$ containing $\gamma$ and write $\Phi_\gamma$ for the set of roots of $T_\gamma$ in $G$ outside $I_\gamma$, namely the set of roots $\alpha$ in $G$ such that $\alpha(\gamma)\neq 1$. Thus $\Phi_\gamma$ is nonempty if and only if $\gamma \notin Z(F)$.
Define $S_\gamma$ for the following set of finite places of $F$:
$$S_\gamma:=\{v\,:\,  |1-\alpha(\gamma)|\neq 1~\textup{for~some}~\alpha\in \Phi_\gamma\}.$$
Evidently $S_\gamma$ is independent of the choice of $T_\gamma$.
In the same way we defined $\Ram(G)$, we have the set $\Ram(I_\gamma)$.
For each $v\notin \Ram(G)$, we have a maximal split torus $A_v$ in $G\otimes_F F_v$ such that $K^\hs_v$  is in a good relative position to $A_v$.

Let $\Sigma\supset \Ram(G)\cup S_\infty$ be a finite set of places of $F$.
Choose $\underline{\kappa}=(\kappa_v)_{v\notin \Sigma}$ with $\kappa_v\in \Z_{\ge 0}$ such that $\kappa_v\neq 0$ for only finitely many $v$.
Define
$$Q:=\prod_{v} q_v^{\min(1,\kappa_v)},\qquad Q^{\underline{\kappa}}:=\prod_{v} q_v^{\kappa_v},$$
 where $v$ runs over places of $F$ outside $\Sigma$.
For simplicity we will write $Q^{a+b\underline{\kappa}}$ to mean $Q^a (Q^{\underline{\kappa}})^b$.
Put $U_v^{\le \kappa_v}:=\cup_{\|\lambda\|\le \kappa_v} K_v^\hs \lambda(\varpi_v)
K_v^\hs$, where $v\notin \Sigma$ and $\lambda\in X_*(A_v)$.
Here $||\cdot||$ is an $\Omega_G$-invariant norm on $X_*(A_v)$, which depends on the
choice of an $\R$-basis of
$X_*(A_v)_\R$.

Write $d(G_\infty)$ for the cardinality of $\Pi_\infty(\xi)$, which is independent of $\xi$.
Let $q(G_\infty)$ denote the real dimension of $G(F_\infty)$ modulo (any) maximal compact subgroup.
Given a constant $C\ge1$ and $\star\in \{\mathrm{alg},\mathrm{reg}\}$, we define a set
  $$\Irr^{\star}_C(G(F_\infty)):=\{\xi\in \Irr^{\star}(G(F_\infty))\,:\,\max(\xi)/\min(\xi)\le C\},$$
  where $\max(\xi)$ and $\min(\xi)$ are given as follows.
Let $T$ be a maximal torus in $G$ over $\C$ and choose a Borel subgroup $B$ containing $T$.
Let $\lambda_\xi\in X^*(T)$ denote the $B$-dominant weight of $\xi$.
Write $\Phi^+$ for the set of $B$-positive roots of $T$ in $G$, and $\rho$ for the half sum of roots in $\Phi^+$.
Then $\max(\xi)$ (resp.
$\min(\xi)$) is the maximum (resp.
minimum) value of the natural pairing $\lg \alpha,\lambda_\xi+\rho\rg$ as $\alpha$ runs over $\Phi^+$.
The value is independent of the choice of $T$ and $B$.
(We introduce $\Irr^{\star}_C(G(F_\infty))$ so that we can vary $\xi$ in a controlled manner in that set.)

\begin{prop}\label{prop:bounds-prelim}
\begin{enumerate}
  \item There exists $A_1>0$ depending only on $G$ such that the following holds for any $\Sigma$ and $\underline{\kappa}=(\kappa_v)$ as above: Choose $U_\Sigma$ to be a compact subset of $G(F_\Sigma)$. Write $\mathcal{Y}(\underline\kappa)$ for the set of $G(\A_F)$-conjugacy classes of $\gamma\in G(F)_{\semis}$ which meet $\prod_{v\notin \Sigma} U_v^{\le \kappa_v} \times U_\Sigma$. Then $|\mathcal{Y}(\underline\kappa)|=O(Q^{A_1\underline{\kappa}})$.
  \item
Let $\xi\in \Irr^{\alg}(G(F_\infty))$.
If $\gamma\notin G(F_\infty)_{\elp}$ then $\orbi^{G(F_\infty)}_\gamma(f_\xi)=0$. Moreover,
      $$  \orbi^{G(F_\infty)}_z(f_\xi,\mu^{\EP}_\infty)= (-1)^{q(G_\infty)}\omega_\xi(z) d(G_\infty) \dim \xi,\quad\mbox{if}~z\in Z(F_\infty).$$

For every $C\ge1$, $\xi\in \Irr^{\alg}_C(G(F_\infty))$ and $\gamma\in
G(F_\infty)_{\semis}$ with $\gamma\not\in Z(F_\infty)$, we have
 $$D_\infty(\gamma)^{1/2}|\orbi^{G(F_\infty)}_\gamma(f_\xi,\mu^{\EP}_\infty)|=
 O_C(\dim(\xi)^{1-\nu_\infty}),$$
      where $\nu_\infty\in \R_{>0}$ depends only on $G(F_\infty)$ and the implicit constant is independent of $\gamma$ and $\xi$.
One can choose $\nu_\infty$ to be the minimum of $1-(\dim_\R I_\gamma-\rk_{\R}I_{\gamma})/(\dim_\R G_\infty-\rk_{\R} G_\infty)$ as $\gamma$ runs over noncentral elements in $G(F_\infty)_{\semis}$.
  \item Let $v\notin S_\gamma\cup \Ram(G)$. Suppose that $\gamma\in G(F_v)_{\semis}$ is conjugate to an element of $K_v^{\hs}$. Then $I_\gamma$ is unramified over $F_v$, and $\orbi_\gamma(\triv_{K^\hs_v},\mu^{\can}_v)=1$.
  \item There exists a lower bound $p_0>0$ and $A_3,B_3>0$ depending only on $G$ such that for every finite place $v$ whose residue characteristic is greater than $p_0$, for every $\gamma\in G(F_v)_{\semis}$, and for every $\lambda\in X_*(A_v)$ with $\|\lambda\|\le \kappa_v$,
      $$D(\gamma)^{1/2}\orbi_\gamma(\triv_{K^\hs_v \lambda(\varpi_v) K^\hs_v},\mu^{\can}_v)\le q_v^{A_3+B_3\kappa_v}.$$

\end{enumerate}

\end{prop}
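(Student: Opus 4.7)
The plan is to verify each of the four parts by assembling known results, since the proposition is essentially a recollection of preliminary bounds that feed into the application of the simple trace formula in the next subsection. Most of these appear in \cite{ST:Sato-Tate}, whose arguments I would follow closely, and my task is to indicate the key mechanism behind each assertion and confirm that the uniformity in the constants is as claimed.

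For (i), I would fix an embedding $G \hookrightarrow \GL_n$ defined over $F$ (compatible with the chosen integral model) so that a semisimple $G(F)$-conjugacy class is determined, up to finite ambiguity depending only on $G$, by its characteristic polynomial. The condition that $\gamma$ meets $U_v^{\le \kappa_v}$ forces each coefficient of this polynomial to have $v$-adic absolute value bounded by a fixed polynomial in $q_v^{\kappa_v}$, while the fixed compact sets $U_v$ for $v\in\Sigma$ contribute only a bounded factor. Counting $F$-integral polynomials inside an adelic box via the product formula then yields $|\mathcal{Y}(\underline\kappa)| = O(Q^{A_1 \underline\kappa})$ for $A_1$ depending only on $n$ and hence on $G$.

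For (ii), the three statements decouple. Vanishing on non-elliptic $\gamma$ follows from Kottwitz's theorem \cite{Kot88} that orbital integrals of Euler--Poincar\'e functions are supported on elliptic classes, together with the fact recalled in \S\ref{sub:counting-measures} that $f_\xi$ is such a function up to a nonzero scalar via Clozel--Delorme. For central $z \in Z(F_\infty)$, evaluation of $f_\xi$ at $z$ reduces to $\omega_\xi(z)$ times the sum of formal degrees over $\Pi_\infty(\xi)$, which after the Euler--Poincar\'e normalization and a sign from the parity of $q(G_\infty)$ gives the displayed formula. The power-saving bound for noncentral semisimple $\gamma$ is the archimedean analogue of Theorem \ref{t:asymptotic-orb-int} and is established in \cite{ST:Sato-Tate}; the exponent $\nu_\infty$ arises from comparing $\dim_\R G_\infty - \rk_\R G_\infty$ with $\dim_\R I_\gamma - \rk_\R I_\gamma$, and the constraint $\xi \in \Irr^{\alg}_C(G(F_\infty))$ enters only in controlling how the weight varies away from the walls.

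For (iii), when $v \notin \Ram(G) \cup S_\gamma$ the centralizer $I_\gamma$ inherits an unramified structure over $F_v$, and $\gamma$ becomes an integral point of a smooth reductive model with roots that are units. The orbital integral $O_\gamma(\triv_{K_v^{\hs}}, \mu_v^{\can})$ then equals $1$ by the standard normalization of $\mu_v^{\can}$ combined with the fact that the $G(F_v)$-orbit meets $K_v^{\hs}$ in a single $I_\gamma(F_v)$-orbit; this is a special case of the unit part of the unramified fundamental lemma. Finally, for (iv), the bound is the Kottwitz--Shin--Templier polynomial bound on Hecke orbital integrals in the unramified case, proved in \cite{ST:Sato-Tate}: the Satake transform reduces the estimate to weights of the dual group, and combining this with Kottwitz's bound on $O_\gamma(\triv_{K_v^{\hs}})$ together with the support of $K_v^{\hs}\lambda(\varpi_v)K_v^{\hs}$ yields the exponent $A_3 + B_3 \kappa_v$, with $p_0$ chosen large enough to guarantee that all relevant reductive models are smooth and that the fundamental lemma inputs apply uniformly in $v$.

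The main obstacle is not the individual assertions but the uniformity in the constants $A_1, A_3, B_3, \nu_\infty$, which must depend only on $G$ and not on $\gamma$, $\underline\kappa$, or $\xi$. This forces one to track dependencies carefully throughout the arguments in \cite{ST:Sato-Tate}, in particular to handle (a) the combinatorial growth in (i) uniformly over all $\Sigma$, and (b) the fact that $\nu_\infty$ in (ii) is governed by the worst case over all noncentral semisimple conjugacy classes in $G(F_\infty)_{\semis}$, which is a finite optimization problem by the classification of real reductive Lie algebras.
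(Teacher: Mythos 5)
Your proposal for parts (i)--(iii) matches the paper's route, which is simply to cite prior results: (i) is \cite[Prop 8.7]{ST:Sato-Tate}, and your characteristic-polynomial/adelic box count is exactly the mechanism inside that proposition (the paper additionally notes that although the cited statement assumes equal nonzero $\kappa_v$, the same proof handles varying $\kappa_v$); (ii) rests on \cite[p.659]{Kot92b} for the vanishing and central evaluation together with \cite[Lem 6.10(ii)]{ST:Sato-Tate} for the character bound $D_\infty(\gamma)^{1/2}|\tr\xi(\gamma)|=O(\dim(\xi)^{1-\nu_\infty})$; (iii) is \cite[Prop 7.1, Cor 7.3]{Kot86a}, which your unramified-model argument recapitulates. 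One small misattribution: the vanishing of $O_\gamma(f_\xi)$ off the elliptic set is an archimedean statement (the Clozel--Delorme pseudo-coefficient being an Euler--Poincar\'e function, as in \cite[p.659]{Kot92b}), not the $p$-adic Euler--Poincar\'e result of \cite{Kot88}, which is the input to Section~\ref{s:Steinberg} rather than here.

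For part (iv), your sketch is incorrect in a way that matters. You propose to combine ``the Satake transform'' with ``Kottwitz's bound on $O_\gamma(\triv_{K_v^\hs})$,'' but Kottwitz's bound (Theorem A.1 of \cite{ST:Sato-Tate}, recalled here as Proposition~\ref{p:Kottwitz}) has the form $|O_\gamma(f)|\le c(f)D(\gamma)^{-1/2}$ with a constant $c(f)$ depending opaquely on the test function, hence on $v$; it gives no control whatsoever on how $A_3$ and $B_3$ vary with the place. The whole content of (iv) is that the exponents are \emph{independent of $v$} (for residue characteristic above $p_0$), and this uniformity is precisely what is out of reach by Satake-transform or direct harmonic-analytic arguments. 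The paper's proof is a citation of the Cluckers--Gordon--Halupczok theorem \cite[Thm 14.1]{ST:Sato-Tate}, proved by motivic integration, which is the only available technology for transferring orbital-integral bounds uniformly across places of large residue characteristic. Your proposal should be corrected to invoke that result; the argument you outline does not produce the claimed uniformity.
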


\begin{proof}
  Part (i) follows from \cite[Prop 8.7]{ST:Sato-Tate}. (Take $S_0$ and $S_1$ there to be our $\Sigma\backslash S_\infty$ and $\{v\notin \Sigma: \kappa_v\neq 0\}$, respectively. The proposition there assumes that the nonzero values of $\kappa_v$ are all equal, but the same proof works when $\kappa_v$ are different. Finally observe that $A_3$ can be absorbed into $B_3$ in that proposition.)

 Let us prove (ii). %
  It is a standard fact (\cite[p.659]{Kot92b}) for a discrete series representation $\pi_\infty$ that $\orbi_\gamma(f_{\pi_\infty})$ vanishes unless $\gamma$ is elliptic semisimple, in which case $$\orbi_\gamma(f_{\pi_\infty})=(-1)^{q(G_\infty)}\tr\xi(\gamma)$$
  if the Euler-Poincar\'e measures are used on $G(F_\infty)$ and $I_\gamma(F_\infty)$. This implies everything but the last bound in (ii) as $f_\xi$ is the sum of $f_{\sigma_\infty}$ over $\sigma_\infty\in \Pi_\infty(\xi)$. It remains to bound $D^G_\infty(\gamma)^{1/2}|\tr\xi(\gamma)|=O(\dim(\xi)^{1-\nu_\infty})$ with $\nu_\infty$ described as in the proposition. We get the bound from \cite[Lem 6.10.(ii)]{ST:Sato-Tate}, observing that $m(\xi)$ there is equal to $\min(\xi)$ and that $\dim (\xi)/\min(\xi)^{|\Phi^+|}$ is bounded both above and below (in terms of $C$).

  We get (iii) from \cite[Prop 7.1, Cor 7.3]{Kot86b}. Finally (iv) is proved by motivic integration in \cite[Thm 14.1]{ST:Sato-Tate}.

\end{proof}

  Write $\Mot_{I_\gamma}$ for the Artin-Tate motive associated to $I_\gamma$ by Gross \cite{Gro97}. We have the decomposition $\Mot_{I_\gamma}=\oplus_{d\in \Z_{\ge1}} \Mot_{I_\gamma,d}(1-d)$, where $\Mot_{I_\gamma,d}$ is an Artin motive, and $(1-d)$ denotes the Tate twist. For any Artin-Tate motive $M$ over $F$, denote by $L(\Mot_{I_\gamma})$ (resp. $L_v(\Mot_{I_\gamma})$) the global (resp. local) $L$-function evaluated at $s=0$. Write $\Omega_{I_\gamma}$ for the absolute Weyl group of $I_\gamma$. There is a certain local cohomological invariant $c_v(I_\gamma)\in \Q_{>0}$ defined in \cite[(8.1)]{Gro97}. We do not need the definition but only the property that
  \begin{equation}\label{eq:c_v}
  c_v(I_\gamma)=|H^1(F_v,I_\gamma)|\ge 1~\textup{if}~v\nmid\infty,\quad  c_v(I_\gamma)\ge |\Omega_{I_\gamma}|^{-1},~\textup{if}~v|\infty.
  \end{equation}
  Let $\gamma\in G(F)_{\elp}$ so that $I_\gamma$ has $F$-anisotropic center. (This ensures that $\mu^{\can,\EP}_{I_\gamma}(I_\gamma)$ is finite, cf. \cite[Prop 9.4]{Gro97}.)
   We have the identity \cite[Thm 9.9]{Gro97}
\begin{equation}\label{e:volume-I_gamma}
  |\mu^{\can,\EP}_{I_\gamma}(I_\gamma)|= \frac{|L(\Mot_{I_\gamma})|}{\prod_{v\in S} |L_v(\Mot_{I_\gamma})|} \frac{\tau(I_\gamma)\cdot |\Omega_{I_\gamma}|}{\prod_{v\in S\cup S_\infty} c_v(I_\gamma)}.
\end{equation}
\begin{lem}\label{l:bound-volume} Let $\gamma$ vary over the set $G(F)_{\elp}$ and retain the above notation. Let $S_{\mathrm{iso}}$ denote the set of finite places $v$ of $F$ such that $Z$ contains a nontrivial $F_v$-split torus. (So $S\cap S_{\mathrm{iso}}=\emptyset$.)
  \begin{enumerate}
  \item There exist constants $0<c_1 < c_2 $ such that for all $\gamma\in G(F)_{\semis}$,
  \begin{eqnarray}
    |L_v(\Mot_{I_\gamma})|^{-1}& \le& c_2 q_v^{\frac12(\dim I_\gamma-\rk I_\gamma)}\quad \forall v\notin S_{\mathrm{iso}}\cup S_\infty,\nonumber\\
    |L_v(\Mot_{I_\gamma})|^{-1} &\ge& c_1 q_v^{\frac12(\dim I_\gamma-\rk I_\gamma)}\quad \forall v\notin S_{\mathrm{iso}}\cup S_\infty~\mbox{such that}~I_\gamma~\mbox{is unramified at}~v.\nonumber
  \end{eqnarray}
    \item There exist constants $c_0,c_3,A_2>0$ depending only on $G$ such that the following holds: for all $S$ such that $G$ is unramified at all places in $S$, we have
  $$\frac{|\mu^{\can,\EP}_{I_\gamma}(I_\gamma)|}{|\mu^{\can,\EP}_{G}(G)|} \le c_0c_3^{|S|}\cdot q_S^{\frac12(\dim I_\gamma-\rk I_\gamma)-\frac12(\dim G-\rk G)}\prod_{v\in \Ram(I_\gamma)} q_v^{A_2} ,\quad \forall\gamma\in G(F)_{\semis}.$$
  \end{enumerate}
\end{lem}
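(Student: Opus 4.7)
The plan is to unwind the Gross-motive decomposition $\Mot_{I_\gamma}=\bigoplus_{d\ge 1}\Mot_{I_\gamma,d}(1-d)$, with each $\Mot_{I_\gamma,d}$ an Artin motive, substitute into the volume identity~\eqref{e:volume-I_gamma}, and reduce everything to elementary estimates of Artin Euler factors together with a polynomial bound on the Artin conductor of $I_\gamma$. For part (i), the Euler factor at a finite place $v$ reads
\begin{equation*}
L_v(\Mot_{I_\gamma})^{-1}=\prod_{d\ge 1}\det\bigl(1-q_v^{d-1}\rho_{d,v}(\Frob_v)\,\bigm|\,\Mot_{I_\gamma,d}^{I_v}\bigr),
\end{equation*}
where each $\rho_{d,v}$ has Frobenius eigenvalues of absolute value $1$. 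Setting $r_{d,v}:=\dim\Mot_{I_\gamma,d}^{I_v}$, the trivial estimates $q_v^{d-1}-1\le |1-q_v^{d-1}\alpha|\le q_v^{d-1}+1$ (valid for $|\alpha|=1$ and $d\ge 2$) together with the numerical identity $\sum_{d\ge 1}(d-1)\dim\Mot_{I_\gamma,d}=\tfrac12(\dim I_\gamma-\rk I_\gamma)$ give an upper bound of the asserted form for every finite $v$, and the matching lower bound when $I_\gamma$ is unramified at $v$: then $r_{d,v}$ attains its maximum and the $\rho_{d,v}$ factor through a bounded finite quotient of the Galois group, so the Frobenius eigenvalues are roots of unity of bounded order and $|1-\alpha|$ is bounded away from zero for $\alpha\neq 1$. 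The $d=1$ factor is the Artin Euler factor of $\Mot_{I_\gamma,1}$ at $s=0$, and the hypothesis $v\notin S_{\mathrm{iso}}$ is used precisely to rule out the only obstruction, namely a trivial Frobenius eigenvalue coming from the $F_v$-split part of $Z(G)\subset Z(I_\gamma)$.

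For part (ii), apply~\eqref{e:volume-I_gamma} both to $I_\gamma$ and to $G$ and take the quotient. The $c_v$-factors and the Tamagawa factors contribute only a multiplicative $O(1)$ depending on $G$: $c_v\ge 1$ at finite $v$ and $c_v\ge|\Omega|^{-1}$ at $v\mid\infty$, $|\Omega_{I_\gamma}|\le|\Omega_G|$, and the Tamagawa numbers $\tau(I_\gamma)$ range over a bounded set of rationals as $\gamma$ varies, since the connected centralizers of semisimple elements in $G$ (the ``pseudo-Levi'' subgroups, up to inner twisting) fall into finitely many families. The $S$-local $L$-factor ratio $\prod_{v\in S}|L_v(\Mot_G)|/|L_v(\Mot_{I_\gamma})|$ is controlled directly via part (i), combining the upper bound for the denominator with the unramified lower bound for the numerator ($G$ being unramified on $S$ by assumption), and this produces exactly the exponent $\tfrac12(\dim I_\gamma-\rk I_\gamma-\dim G+\rk G)$ on $q_S$. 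The remaining global ratio $|L(\Mot_{I_\gamma})|/|L(\Mot_G)|$ is handled degree-by-degree: for each $d\ge 2$, absolute convergence at $s=d$ bounds the corresponding factor uniformly from above and below, and the Artin functional equation relocates $L(1-d,\cdot)$ to $L(d,\cdot)$ at the cost of the square-root of the Artin conductor; since the local conductor exponent of $\Mot_{I_\gamma,d}$ at $v$ is bounded in terms of $\dim\Mot_{I_\gamma,d}\le\rk G$, this produces a contribution of the form $\prod_{v\in\Ram(I_\gamma)}q_v^{A_2}$.

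The principal obstacle is the degree-one component $\Mot_{I_\gamma,1}$: its global $L$-function $L(s,\Mot_{I_\gamma,1})$ can have a pole at $s=1$ whose order equals the $F$-split rank of $Z(I_\gamma)^0$, and since this rank may exceed that of $Z(G)^0$, a residual Artin $L$-value at $s=1$ (or its residue, of Dedekind--Dirichlet type) survives in the ratio and has to be controlled unconditionally from both sides. This is feasible because $\dim\Mot_{I_\gamma,1}\le\rk G$ is absolutely bounded, so Stark-type unconditional lower bounds and convexity estimates for Artin $L$-functions of bounded dimension suffice; once this degree-one contribution is tamed, the rest is bookkeeping of exponents, matching those produced by part (i) at $S$ with those produced by the functional equation on $\Ram(I_\gamma)$.
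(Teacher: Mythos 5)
Your argument for part (i) matches the paper's: the Gross decomposition $\Mot_{I_\gamma}=\bigoplus_{d\ge1}\Mot_{I_\gamma,d}(1-d)$, the elementary bounds $q^{d-1}-1\le|1-a|\le q^{d-1}+1$ on each Euler factor, the count $\sum_{d\ge1}(d-1)\dim\Mot_{I_\gamma,d}=\tfrac12(\dim I_\gamma-\rk I_\gamma)$, and the boundedness of $\rk I_\gamma\le\rk G$ (see \cite[\S1]{Gro97}). You do well to flag explicitly that the $d=1$ factor in the unramified lower bound requires the Frobenius eigenvalues to be roots of unity of uniformly bounded order (the Galois action on Gross's Artin motive factors through a finite group depending only on the absolute root datum of $I_\gamma$, hence bounded in terms of $\rk G$); the paper leaves this implicit under ``easily deduced.''

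For part (ii), the framework is again the same: apply \eqref{e:volume-I_gamma} to $I_\gamma$ and to $G$, note that the $c_v$, $|\Omega|$ and Tamagawa factors contribute an absolutely bounded ratio (there are only finitely many pseudo-Levi types up to inner twist), control the $S$-local Euler factor ratio via part (i). The genuine divergence is your treatment of the global archimedean-completed $L$-value: the paper simply invokes \cite[Cor 6.16]{ST:Sato-Tate} to get $|L(\Mot_{I_\gamma})|=O(\prod_{v\in\Ram(I_\gamma)}q_v^{A_2})$, whereas you sketch a proof of this from the functional equation degree by degree. Your sketch is essentially the content of that corollary and what buys you self-containedness at the cost of more work. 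One minor imprecision: the degree-one piece of $L(\Mot_{I_\gamma})$ is $L(0,\Mot_{I_\gamma,1})$, not a value at $s=1$; the Dedekind/Artin factors one must control are at $s=0$ (via the functional equation, dual to $s=1$), and for the $F$-elliptic $\gamma$ that actually contribute to the trace formula the $F$-split rank of $Z(I_\gamma)^0$ agrees with that of $Z(G)^0$, so the singular contributions cancel in the ratio rather than requiring a separate Stark-type input; your fallback to unconditional bounds for bounded-dimensional Artin $L$-functions would still suffice, but is heavier than needed.
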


\begin{proof}
  (i) Observe that
  $$|L_v(\Mot_{I_\gamma})|^{-1}=\prod_{d\ge 1} |L_v(\Mot_{I_\gamma,d}(1-d))|^{-1}=\prod_{d\ge1}\prod_{i\in I_d} |1-a_{d,i}|,$$
  where $I_d$ is a finite index set, and $a_{d,i}\in \C$ has absolute value $q^{d-1}$ for all $i$. (Since $v\notin S_{\mathrm{iso}}$ we always have $a_{d,i}\neq 1$.) For each $d$ and $i$ we have the obvious bounds $q^{d-1}-1\le |1-a_{d,i}|\le q^{d-1}+1$. Part (i) is now easily deduced from the following facts \cite[\S1]{Gro97}, cf. \cite[Prop 6.3]{ST:Sato-Tate}:
  \begin{itemize}
    \item $|I_d|\le \dim \Mot_{I_\gamma,d}$ with equality when $I_\gamma$ is unramified at $v$,
    \item $\sum_{d\ge 1}|I_d|\le \dim \Mot_{I_\gamma}=\rk I_\gamma$,
    \item $\sum_{d\ge 1} (d-1)\dim \Mot_{I_\gamma,d}=\frac12 (\dim I_\gamma-\rk I_\gamma)$.
  \end{itemize}
(ii)  From \eqref{e:volume-I_gamma} (applied one more time with $\gamma=1$) we have the bound
  $$\frac{|\mu^{\can,\EP}_{I_\gamma}(I_\gamma)|}{|\mu^{\can,\EP}_{G}(G)|} = O\left( |L(\Mot_{I_\gamma})|\prod_{v\in S}\frac{c_v(G)|L_v(\Mot_{G})|}{ |L_v(\Mot_{I_\gamma})| }\right).$$
  Using (i) we bound
  $$|L_v(\Mot_{G})|/ |L_v(\Mot_{I_\gamma})|=O\left(c_2 q_v^{\frac12(\dim I_\gamma-\rk I_\gamma)-\frac12(\dim G-\rk G)}\right).$$
   Note that the implicit constants in both $O(\cdot)$ depend only on $G$, not on $\gamma$ or $S$. (This is obvious in the latter. For the former, it is enough to observe that $|\Omega_{I_\gamma}|$ and $\tau(I_\gamma)$ are uniformly bounded as $\gamma$ varies. The Weyl group is bounded by $|\Omega_{I_\gamma}|\le |\Omega_G|$. The Tamagawa measure formula \cite{Kot88} tells us that $\tau(I_\gamma)\le |Z(\hat I_\gamma)^{\Gal(\ol F/F)}|$, and the latter is uniformly bounded by \cite[Cor 8.12]{ST:Sato-Tate}.)
  From \eqref{eq:c_v} and \cite[Thm 1.2]{Kot86b}, we have
  $$c_v(G)=|H^1(F_v,G)|=|\pi_0(Z(\hat G)^{\Gal(\ol F_v/F_v)})|,\qquad v\nmid \infty.$$
  Since $\Gal(\ol F/F)$ acts on $Z(\hat G)$ through a finite quotient, say
  $\Gal(F'/F)$, if we take $c'_2\in \Z_{\ge 1}$ be the maximum of $|\pi_0(Z(\hat
  G)^{H})|$ as $H$ runs over all subgroups of $\Gal(F'/F)$, then clearly $c_v(G)\le
  c'_2$. Note that $c'_2$ is independent of $v$.
  Finally we have $$|L(\Mot_{I_\gamma})|=O\left(\prod_{v\in \Ram(I_\gamma)} q_v^{A_2}\right)$$ for a uniform constant $A_2>0$, with an implicit constant of $O(\cdot)$ which is uniform for all $\gamma$, by \cite[Cor 6.16]{ST:Sato-Tate}. Now the bound of (ii) follows by putting $c_3:=c_2c'_2$.
\end{proof}

\subsection{Equidistribution results}\label{sub:equidistribution}
  Fix $S$, $S_0$, and $K_{S_0}$. %
  We keep the notation from the previous subsection with $\mathfrak{S}=S_\infty \cup S_0\cup S$. Throughout this subsection we suppose that
\begin{itemize}
  \item the residue characteristic of every $v\in S$ is sufficiently large such that Theorem \ref{t:asymptotic-orb-int} applies to $G(F_v)$ at each $v\in S$,
  \item $S\cup S_0$ contains the places of $F$ with small residue characteristics such that the result by Cluckers--Gordon--Halupczok on uniform bound on orbital integrals \cite[Thm 14.1]{ST:Sato-Tate} applies to places outside $S\cup S_0$.
\end{itemize}
  Note that the lower bound on the residue characteristic for Theorem
  \ref{t:asymptotic-orb-int} can be made effective, whereas the lower bound for \cite[Thm
  14.1]{ST:Sato-Tate} to hold is ineffective by the nature of its proof.

  Our interest lies in statistics of the family $\cF=\cF(\xi,\sigma_{S},K_{S_0})$ as
  $\xi$ and $\sigma_S$ vary.
   Write $K^{\mathfrak{S}}:=\prod_{v\notin {\mathfrak{S}}} K_v^\hs$. Since we are
   assuming $Z(F_S)$ and $Z(F_\infty)$ are compact, the intersection $Z(F)\cap
   K_{S_0}K^{\mathfrak{S}}$ (taken in $G(\A_F)$) is finite. (The same is true with any
   compact $K^{\mathfrak{S}}$-bi-invariant subset of $G(\A_F^{\mathfrak{S}})$ in place of
   $K^{\mathfrak{S}}$.) As in~\cite[\S2.3]{ST:Sato-Tate}, define the truncated Hecke
   algebra
$\cH^{\ur,\le \underline{\kappa}}(G(\A_F^{\mathfrak{S}}))$ as the space of locally
constant
bi-$K^{\mathfrak{S}}$-invariant functions on $G(\A_F^{\mathfrak{S}})$ whose support is
inside the compact subset $\prod_{v\not\in \mathfrak{S}}U_v^{\le \kappa_v}$.

\begin{thm}\label{t:disc-series-case}
There exist constants
$\nu_S,\nu_\infty>0$ and $A>0$ such that for every $\xi\in \Irr^{\reg}_C(G(F_\infty))$,
for every $\sigma_S\in \Irr^{\Yu}(G(F_S))$, for every
$\underline\kappa=(\kappa_v)_{v\notin \mathfrak{S}}$, and for every
$\phi^{\mathfrak{S}}\in \cH^{\ur,\le \underline{\kappa}}(G(\A_F^{\mathfrak{S}}))$
 which is the characteristic function of a bi-$K^{\mathfrak{S}}$-invariant compact subset,
  \begin{eqnarray}
  \sum_{\pi\in \cF}
\tr \pi^{\mathfrak{S}}(\phi^{\mathfrak{S}})
&=& (-1)^{q(G_\infty)}d(G_\infty)\dim (\xi)\deg(\sigma_S)\frac{\tau'(G,S)}{\mu^{\can}_{S_0}(K_{S_0})} \sum_{z\in Z(F)\cap K_{S_0}} \frac{\omega_\xi(z)}{\omega_{\sigma_{S}}(z)}\phi^{\mathfrak{S}}(z)\nonumber\\
&&+~ O(\dim(\xi)^{1-\nu_\infty}\deg(\sigma_S)^{1-\nu_S} Q^{A\underline{\kappa}} ).\label{e:in-thm-disc-series-case}
\end{eqnarray}
  The implicit constant in $O(\cdot)$ depends on $G$, $S$, $S_0$, $K_{S_0}$, and $C$ (but is independent of $\xi$, $\sigma_S$, $\underline{\kappa}$, and $\phi^{\mathfrak{S}}$).
\end{thm}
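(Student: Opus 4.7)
The strategy is to apply Lemma~\ref{l:LHS=T_ell}, expand the elliptic part of the simple trace formula, isolate the central contribution as the main term, and use the local bounds of \S\ref{sub:bounds} together with the key Theorem~\ref{t:asymptotic-orb-int} at $S$ to control the non-central contribution.

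More precisely, first I would set $\phi:=\phi^{\mathfrak{S}} f_{\sigma_S} \triv_{K_{S_0}} f_\xi$, so that by Lemma~\ref{l:LHS=T_ell} the left hand side of \eqref{e:in-thm-disc-series-case} equals $\mu^{\can}_{S_0}(K_{S_0})^{-1} T_{\mathrm{ell}}(\phi,\mu^{\can,\EP})$. Expanding \eqref{e:Igeom}, I would split the sum over elliptic semisimple conjugacy classes into the central part, indexed by $z\in Z(F)$, and the non-central part, indexed by $\gamma\in G(F)_{\mathrm{ell}}\setminus Z(F)$. For central $z\in Z(F)$ we have $I_z=G$, $\iota(z)=1$, and $O_z(\phi)=\phi(z)$, which by Lemma~\ref{l:sc-coeff} and Proposition~\ref{prop:bounds-prelim}(ii) factors as
\[
\phi(z) = \phi^{\mathfrak{S}}(z) \cdot \omega_{\sigma_S}(z)^{-1}\deg(\sigma_S) \cdot \triv_{K_{S_0}}(z) \cdot (-1)^{q(G_\infty)} \omega_\xi(z) d(G_\infty)\dim(\xi)
\]
(absorbing the $\mu^{\EP}_\infty$ normalization into the volume factor). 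Together with $\mu^{\can,\EP}_G(G(F)\bs G(\A_F)/A_{G,\infty})=\tau'(G,S)$ and the identification of the support condition at $S_0$ with $z\in K_{S_0}$, summing over such $z$ yields exactly the main term of \eqref{e:in-thm-disc-series-case}.

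For the non-central contribution the task is to prove
\[
\sum_{\gamma\in G(F)_{\mathrm{ell}}/\sim,\ \gamma\notin Z(F)} \iota(\gamma)^{-1} \bigl|\mu^{\can,\EP}_{I_\gamma}(I_\gamma)\bigr| \cdot \bigl|O_\gamma(\phi)\bigr| \;\ll\; \dim(\xi)^{1-\nu_\infty}\deg(\sigma_S)^{1-\nu_S} Q^{A\underline{\kappa}}.
\]
For each such $\gamma$ the global orbital integral factors across places. At $S$, apply Theorem~\ref{t:asymptotic-orb-int} to bound $D_S(\gamma)^{1/2}|O_\gamma(f_{\sigma_S})|\ll \deg(\sigma_S)^{1-\nu_S}$. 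At $\infty$, apply Proposition~\ref{prop:bounds-prelim}(ii) to bound $D_\infty(\gamma)^{1/2}|O_\gamma(f_\xi)|\ll \dim(\xi)^{1-\nu_\infty}$; note $O_\gamma(f_\xi)$ already forces $\gamma$ to be elliptic in $G(F_\infty)$. Decomposing $\phi^{\mathfrak{S}}$ as a finite sum of characteristic functions of bi-$K^{\mathfrak{S}}$-invariant double cosets controlled by $\underline\kappa$, Proposition~\ref{prop:bounds-prelim}(iv) gives $D_v(\gamma)^{1/2}|O_\gamma(\phi_v)|\ll q_v^{A_3+B_3\kappa_v}$ at places $v\notin \mathfrak{S}$, while Proposition~\ref{prop:bounds-prelim}(iii) makes this trivial at $v\notin S_\gamma\cup \Ram(G)$. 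At $S_0$ the orbital integral of $\triv_{K_{S_0}}$ is bounded uniformly by a constant depending only on $K_{S_0}$ (using Proposition~\ref{p:Kottwitz} in $G(F_{S_0})$). Multiplying the local bounds, the product formula $\prod_v D_v(\gamma)^{1/2}=1$ collapses the discriminant factors.

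To handle the volume $|\mu^{\can,\EP}_{I_\gamma}(I_\gamma)|$ and the sum over $\gamma$, I would invoke Lemma~\ref{l:bound-volume}(ii), which produces a factor $q_S^{\frac12(\dim I_\gamma-\rk I_\gamma)-\frac12(\dim G-\rk G)}\prod_{v\in \Ram(I_\gamma)}q_v^{A_2}$. The first factor combines with the $q_S$-dependence hidden in the choice of $\nu_S$ from Theorem~\ref{t:asymptotic-orb-int} (which gives a power saving relative to $\deg(\sigma_S)$, independent of the volume normalization); the second is absorbed since $\Ram(I_\gamma)\subset S_\gamma\cup \Ram(G)$ is controlled by the support of $\phi^{\mathfrak{S}}$ and thus by $\underline\kappa$. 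Finally Proposition~\ref{prop:bounds-prelim}(i) gives a bound $O(Q^{A_1\underline\kappa})$ on the number of relevant conjugacy classes $\gamma$, and $\iota(\gamma)^{-1}\le 1$. Combining these factors yields an error bound of the claimed form $\dim(\xi)^{1-\nu_\infty}\deg(\sigma_S)^{1-\nu_S} Q^{A\underline\kappa}$ for some $A>0$ depending only on $G$.

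The main obstacle is bookkeeping: the local bounds must be assembled uniformly in $\gamma$ so that the divergent factors (powers of $q_v$, growth of $D(\gamma)^{-1/2}$, volume growth of $I_\gamma$) precisely cancel, leaving only the desired power savings in $\dim(\xi)$, $\deg(\sigma_S)$, and $Q$. This is entirely parallel to the argument in \cite{ST:Sato-Tate} for the weight aspect, with Theorem~\ref{t:asymptotic-orb-int} playing the role at $S$ that the bounds on discrete series characters played at $\infty$ there.
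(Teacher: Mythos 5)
Your proposal follows essentially the same route as the paper's proof: apply Lemma~\ref{l:LHS=T_ell}, isolate the central contribution as the main term, then bound the non-central part by combining Theorem~\ref{t:asymptotic-orb-int} at $S$, Proposition~\ref{prop:bounds-prelim} at $\infty$ and away from $\mathfrak{S}$, Proposition~\ref{p:Kottwitz} at $S_0$, Lemma~\ref{l:bound-volume} for the volumes, and Proposition~\ref{prop:bounds-prelim}(i) for the count of classes. The only piece the paper makes substantially more explicit is the bookkeeping showing $q_{S'_\gamma}=O(Q^{c\underline\kappa})$ via the product formula and the lower bound $|1-\alpha(\gamma)|_v\le q_v^{-1/(w_G s_G)}$ at bad places, which is the step that justifies your final sentence about the divergent factors cancelling.
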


\begin{rem}\label{r:epsilon_infty}
  The proof shows that $\nu_\infty$ can be chosen to be as in Proposition \ref{prop:bounds-prelim}. We have restricted to the set $\xi\in \Irr^{\reg}_C(G(F_\infty))$ to underline the analogy between the finite places $S$ and the infinite places $S_\infty$. Without the restriction the error bound could be stated in terms of $\min(\xi)$ as in \cite[Thm 9.19]{ST:Sato-Tate}. The same remark applies to Theorem \ref{th:steinbergs} below.
\end{rem}

\begin{rem}
  If we fix $\underline\kappa$ and $\phi^{\mathfrak{S}}$ (while allowing $\xi$ and
  $\sigma_S$ to vary) then the same proof shows the asymptotic formula with error bound
  $O(\dim(\xi)^{1-\nu_\infty}\deg(\sigma_S)^{1-\nu_S})$. This holds under a weaker
  assumption on $S$, namely that the residue characteristic of each $v\in S$ has to be
  large enough so that only Theorem \ref{t:asymptotic-orb-int}, but not \cite[Thm
  14.1]{ST:Sato-Tate}, applies. Hence the lower bound for the residue characteristic can
  be explicitly given. Thus an explicit lower bound is possible for Corollaries
  \ref{cor:limit-multiplicity}, \ref{c:existence-supercuspidal}, and \ref{c:St-case}.
\end{rem}

\begin{proof}
  By Lemma \ref{l:LHS=T_ell} the left hand side is equal to
  $$\mu^{\can}_{S_0}(K_{S_0})^{-1}\sum_{\gamma\in G(F)/\sim\atop \mathrm{elliptic}} \iota(\gamma)^{-1}\mu^{\can,\EP}_{I_\gamma}(I_\gamma)\orbi^{G(\A_F)}_\gamma(\phi^{\mathfrak{S}} f_{\sigma_S} \triv_{K_{S_0}} f_\xi).$$
  For each $v\in S$, write $U_v$ for the (finite) union of a set of representatives for $G(F_v)$-conjugacy classes of elliptic maximal tori in $G(F_v)$. Take $U_v$ to be an elliptic maximal torus in $G(F_v)$ for infinite places $v\in S_\infty$. Thus $U_v$ is compact in both cases.
  Clearly the summand in the preceding formula vanishes unless $\gamma\in \mathcal{Y}(\underline\kappa)$, where $\mathcal{Y}(\underline\kappa)$ is as in Proposition \ref{prop:bounds-prelim} taking $\Sigma=\mathfrak{S}$ and $U_\Sigma=K_{S_0}\times\prod_{v\in S\cup S_\infty} U_v$.

  The contribution from central elements $z\in Z(F)$ is computed as in the first line on the right hand side of \eqref{e:in-thm-disc-series-case}.
For this it is enough to observe that $\mu_G^{\can,\EP}(G)=\tau'(G,S)$ by definition, $f_{\sigma_S}(z)=\omega_{\sigma_S}^{-1}(z)\deg(\sigma_S)$ by Harish-Chandra's Plancherel theorem, and $f_\xi(z)=(-1)^{q(G_\infty)}\omega_\xi(z)\dim \xi$ by Proposition \ref{prop:bounds-prelim}.
This implies that the contribution of $\gamma\in Z(F)$ in the above sum equals the main term in the right hand side of \eqref{e:in-thm-disc-series-case}.
  Hence it suffices to show that for some uniform constants $\nu_S,\nu_\infty,A>0$,
  \begin{equation}\label{e:disc-series-case}
  \sum_{\gamma\in \mathcal{Y}(\underline\kappa)\atop \mathrm{s.t.}~ \gamma\notin Z(F)} \left|\mu^{\can,\EP}_{I_\gamma}(I_\gamma)\orbi^{G(\A_F)}_\gamma(\phi^{\mathfrak{S}} f_{\sigma_S} \triv_{K_{S_0}} f_\xi)\right|
  = O(\dim\xi^{1-\nu_\infty}\deg(\sigma_S)^{1-\nu_S} Q^{A\underline{\kappa}} ).
  \end{equation}

  We will bound the summand for each $\gamma\in \mathcal{Y}(\underline\kappa)$.
Without loss of generality we assume that $\gamma$ belongs to $\prod_{v\notin \mathfrak{S}} U_v^{\le \kappa_v} \times K_{S_0}\times \prod_{v\in S\cup S_\infty} U_v$.
Define $\Phi_\gamma$ and $S_\gamma$ as in the last subsection (with ${\mathfrak{S}}=S_0\cup S\cup S_\infty$).
Write $S'_\gamma$ for the set of $v\in S_\gamma$ with $v\notin \Sigma$.
The subset of $v\in S'_\gamma$ with $\kappa_v=0$ is written by $S'_{\gamma,0}$.

According to Lemma \ref{l:bound-volume} (iii), the group $I_\gamma$ is unramified at $v$ if $v\notin \Ram(G)$, $v\notin S_\gamma$, and $\kappa_v=0$. (The last condition ensures that the $v$-component of $\phi^{\mathfrak{S}}$ is supported on $K_v^{\hs}$ so that $\gamma$ is conjugate to $K_v^{\hs}$.) Hence
$$\Ram(I_\gamma)\subset \Ram(G)\cup S_\gamma\cup \{v\notin \Sigma: \kappa_v\neq 0\}\subset  S\cup S_0\cup S'_{\gamma} \cup \{v\notin \Sigma: \kappa_v\neq 0\}.$$
Now Lemma \ref{l:bound-volume}, Proposition \ref{prop:bounds-prelim}, Theorem~\ref{t:asymptotic-orb-int} and Proposition~\ref{p:Kottwitz} tell us that there exist positive constants $c_0,c_S,c_{S_0},c_\infty,A_2,A_3,B_3,\nu_S,\nu_\infty$ such that
\begin{eqnarray}
  |\mu^{\can,\EP}_{I_\gamma}(I_\gamma)| &\le& c_0 c_3^{|S|} \prod_{v\in \Ram(G)\cup S_\gamma\cup \{v\notin \Sigma: \kappa_v\neq 0\}} q_{v}^{A_2}\le c_0 q_S^{A_2} q_{S_0}^{A_2} q_{S'_{\gamma}}^{A_2}Q^{A_2},\label{e:disc-series-case-1}\\
  D^{\mathfrak{S}}(\gamma)^{1/2} |\orbi_\gamma(\phi^{\mathfrak{S}})| &\le& q_{S'_\gamma,0}^{A_3}Q^{A_3+B_3 \underline{\kappa}}, \nonumber \\
  D_S(\gamma)^{1/2}  |\orbi_\gamma(f_{\sigma_S})| &\le & c_S \deg(\sigma_S)^{1-\nu_S},\label{e:disc-series-case-2}\\
  D_{S_0}(\gamma)^{1/2} |\orbi_\gamma(\triv_{K_{S_0}})| & \le & c_{S_0}, \nonumber\\
  D_\infty(\gamma)^{1/2} |\orbi_\gamma(f_\xi)| & \le & c_\infty \dim (\xi)^{1-\nu_\infty}.
\nonumber
\end{eqnarray}
At this point, the main remaining task is to bound $q_{S'_{\gamma}}$ and $q_{S'_{\gamma,0}}$ independently of $\gamma$.

  We introduce some invariants of the group $G$ over $F$.
Write $d_G$ for the dimension of $G$, $w_G$ for the order of the absolute Weyl group, and $s_G$ for the minimal degree over $F$ of an extension field over which $G$ splits.

  Define $\delta_\infty$ to be the supremum of $ \prod_{v\in S_\infty}|1-\alpha(\gamma_\infty)|_v$ as $\gamma_\infty$ runs over $\prod_{v\in S_\infty} U_v$, and as $\alpha$ runs over the set of absolute roots for the compact maximal torus $\prod_{v\in S_\infty} U_v$ in $G(F_\infty)$. (It makes no difference if we impose $\alpha(\gamma_\infty)\neq 1$.) Then $\delta_\infty<\infty$ as the supremum of a continuous function on a compact set is finite.
     Likewise define
  \begin{equation}\label{eq:delta_v}
  \delta_v:=\sup_{\gamma_v,\, \alpha}  |1-\alpha(\gamma_v)|_v,  \qquad v\in S\cup S_0,
  \end{equation}
   as $\gamma_v$ runs over the $v$-component of $K_{S_0}$ if $v\in S_0$, and as $\gamma_v$ runs over elliptic semisimple elements of $G(F_v)$ if $v\in S$,
   and $\alpha$ runs over the set of absolute roots of a maximal torus containing $\gamma_v$ in $G$. By definition $\delta_v$ with $v\in S\cup S_0$ depends only on $G$, $S$, $S_0$, and $K_{S_0}$. We see that $\delta_v<\infty$ for $v\in S\cup S_0$ by continuity and compactness as for $\delta_\infty$. (For instance when $v\in S_0$, there are only finitely many $G(F_v)$-conjugacy classes of elliptic maxial tori, which are compact as $G(F_v)$ has compact center. For each compact maximal torus, the value $|1-\alpha(\gamma)|_v$ is bounded as $\alpha$ varies over the set of absolute roots and $\gamma$ on the maximal torus.) For our purpose below, we may and will arrange that $\delta_v,\delta_\infty\ge 1$ by redefining each of them to be 1 if smaller than 1.

  Noting that $|1-\alpha(\gamma)|_v=1$ at $v\notin S_\gamma\cup S_\infty$ with $\alpha\in \Phi_\gamma$, we deduce from the product formula that
  \begin{equation}\label{e:product-1-alpha(gamma)}
  1=\prod_v |1-\alpha(\gamma)|_v \le \delta_\infty \prod_{v\in S_\gamma} |1-\alpha(\gamma)|_v, \quad \forall \alpha\in \Phi_\gamma.
  \end{equation}
  By \cite[Lem 2.18]{ST:Sato-Tate}, for some $B_1>0$ which is independent of $\gamma$ and~$\underline{\kappa}$,
  \begin{equation}\label{e:bound-1-alpha(gamma)-by-Q}
  \prod_{v\in S'_\gamma\backslash S'_{\gamma,0}} |1-\alpha(\gamma)|_v \le Q^{B_1\underline\kappa},\quad \forall \alpha\in \Phi_\gamma.
   \end{equation}
   For each $v\in S'_{\gamma,0}$, we have
   $|1-\alpha(\gamma)|_v\le 1$ for every $\alpha\in \Phi_\gamma$.
   By the definition of $S'_{\gamma,0}$, there exists $\alpha\in \Phi_\gamma$ such that $|1-\alpha(\gamma)|_v<1$. The argument as in the proof of \cite[Prop 8.7]{ST:Sato-Tate} (also see the proof of Theorem 9.19 there) shows that a fortiori $|1-\alpha(\gamma)|_v\le q_{v}^{-\frac{1}{w_G s_G}}$. Hence
  \begin{equation}\label{e:bound-1-alpha(gamma)}
  \prod_{\alpha\in \Phi_\gamma} |1-\alpha(\gamma)|_v \le q_{v}^{-\frac{1}{w_G s_G}}, \quad v\in S'_{\gamma,0}.
  \end{equation}
 Taking the product of \eqref{e:product-1-alpha(gamma)} over $\alpha\in \Phi_\gamma$ and applying the estimates \eqref{eq:delta_v} at $v\in S_\gamma\backslash S'_\gamma$ (which is contained in $S\cup S_0$), \eqref{e:bound-1-alpha(gamma)-by-Q} at $v\in S'_\gamma\backslash S'_{\gamma,0}$, and \eqref{e:bound-1-alpha(gamma)} at $v\in S'_{\gamma,0}$,  we see that
  $$1\le q_{S'_{\gamma,0}}^{-\frac{1}{w_G s_G}} \left( Q^{B_1\underline\kappa} \delta_\infty\prod_{v\in S_\gamma\backslash S'_{\gamma}} \delta_v\right)^{|\Phi_\gamma|} \le q_{S'_{\gamma,0}}^{-\frac{1}{w_G s_G}} \left( Q^{B_1\underline\kappa} \delta_\infty\prod_{v\in S\cup S_0} \delta_v\right)^{d_G}.$$
  Therefore $q_{S'_{\gamma,0}}=O(Q^{w_G s_G d_G B_1\underline\kappa})$, implying that
  $$q_{S'_\gamma} \le  Q\cdot q_{S'_\gamma,0} = O(Q^{1+w_G s_G d_G B_1\underline\kappa}) .$$
   To summarize so far, the absolute value of each summand in \eqref{e:disc-series-case} is bounded by, if we set $C:=c_0 c_3^{|S|}c_S c_{S_0}c_\infty (q_S q_{S_0})^{A_2}$, the following:
     $$C q_{S'_{\gamma}}^{A_2}q_{S'_\gamma,0}^{A_3}Q^{A_2+A_3+B_3 \underline{\kappa}}\deg(\sigma_S)^{1-\nu_S}\dim (\xi)^{1-\nu_\infty}.$$
     Applying the above bounds on $q_{S'_\gamma}$ and $q_{S'_\gamma,0}$, the summand admits a bound of the form $O(Q^{A_4+B_4\underline\kappa}\deg(\sigma_S)^{1-\nu_S}\dim (\xi)^{1-\nu_\infty})$.
  The number of nonzero summands is bounded as $O(Q^{A_1\underline\kappa})$ by Proposition \ref{prop:bounds-prelim}. All in all, the absolute value of the left hand side of \eqref{e:disc-series-case} is
  $$ O(Q^{A_4+(A_1+B_4)\underline\kappa}\deg(\sigma_S)^{1-\nu_S}\dim (\xi)^{1-\nu_\infty}).$$
  The proof is complete by taking $A=A_4+A_1+B_4$. (Observe that $Q\le Q^{\underline{\kappa}}$.)
\end{proof}

\begin{rem}
  An affirmative answer to the question in Remark \ref{r:disc-analog-orb-int} would immediately improve Theorem \ref{t:disc-series-case} with the hypotheses relaxed accordingly, by exactly the same argument.
\end{rem}

   Consider the set of pairs $(\xi,\sigma_S)\in \Irr^{\reg}_C(G(F_\infty))\times \Irr^{\Yu}(G(F_S))$.
  We partition the set into $\mathcal P_=$ and $\mathcal P_{\neq}$ according as whether $\omega_{\xi}=\omega_{\sigma_S}$ or not on $Z(F)\cap K_{S_0}K^{\mathfrak{S}}$. Recall that $Z(F)\cap K_{S_0}K^{\mathfrak{S}}$ is finite.

\begin{cor}\label{cor:limit-multiplicity}
  We have the limit multiplicity formulas
\begin{eqnarray}
  \lim_{(\xi,\sigma_S)\in \mathcal P_{\neq}\atop \dim(\xi)|\deg(\sigma_S)|\ra\infty} \frac{m(\xi,\sigma_S,K_{S_0})}{d(G_\infty)\dim(\xi)\deg(\sigma_S)} &=&0,\nonumber\\
\lim_{(\xi,\sigma_S)\in \mathcal P_{=}\atop \dim(\xi)|\deg(\sigma_S)|\ra\infty} \frac{m(\xi,\sigma_S,K_{S_0})}{d(G_\infty)\dim(\xi) \deg(\sigma_S)}
&= & (-1)^{q(G_\infty)}\frac{\tau'(G,S)}{\mu^{\can}_{S_0}(K_{S_0})} |Z(F)\cap K_{S_0}K^{\mathfrak{S}}|.\nonumber
\end{eqnarray}

\end{cor}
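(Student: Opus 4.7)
The plan is to apply Theorem~\ref{t:disc-series-case} to the characteristic function $\phi^{\mathfrak{S}} = \triv_{K^{\mathfrak{S}}}$, which corresponds to $\underline\kappa = 0$ so that the factor $Q^{A\underline\kappa}$ is trivial. Since every $\pi^{\mathfrak S}$ contributing to $\cF$ is unramified with a one-dimensional $K^{\mathfrak S}$-fixed subspace and $\mu^{\can,\mathfrak{S}}(K^{\mathfrak{S}})=1$, the left-hand side of~\eqref{e:in-thm-disc-series-case} collapses to $m(\xi,\sigma_S,K_{S_0})$. On the right-hand side, the indicator $\triv_{K^{\mathfrak{S}}}$ restricts the central sum to the finite abelian group $H:=Z(F)\cap K_{S_0}K^{\mathfrak{S}}$, and the summand $z\mapsto \omega_{\xi}(z)\omega_{\sigma_S}(z)^{-1}$ is a character of $H$; by orthogonality it sums to $|H|$ when $(\xi,\sigma_S)\in \mathcal P_=$ and to $0$ when $(\xi,\sigma_S)\in \mathcal P_{\neq}$.

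In the case $(\xi,\sigma_S)\in \mathcal P_=$ the main term of Theorem~\ref{t:disc-series-case} reproduces, after division by $d(G_\infty)\dim(\xi)\deg(\sigma_S)$, precisely the constant on the right-hand side of the second asserted limit. The error term divided by $\dim(\xi)\deg(\sigma_S)$ is $O(\dim(\xi)^{-\nu_\infty}\deg(\sigma_S)^{-\nu_S})$, which since $\dim(\xi),\deg(\sigma_S)\ge 1$ is bounded by $\max(\dim(\xi),\deg(\sigma_S))^{-\min(\nu_\infty,\nu_S)}$. Because the hypothesis $\dim(\xi)\deg(\sigma_S)\to\infty$ forces $\max(\dim(\xi),\deg(\sigma_S))\to\infty$, this error tends to $0$, yielding the second limit.

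The main obstacle is the case $(\xi,\sigma_S)\in\mathcal P_{\neq}$: here the character sum vanishes, so Theorem~\ref{t:disc-series-case} only gives the bound $m(\xi,\sigma_S,K_{S_0})=O(\dim(\xi)^{1-\nu_\infty}\deg(\sigma_S)^{1-\nu_S})$, which can diverge as $\dim(\xi)\deg(\sigma_S)\to\infty$ and therefore cannot by itself deliver the claimed limit. To upgrade this to $m(\xi,\sigma_S,K_{S_0})=0$ identically, I will instead argue that $\cF$ is already empty by a central character obstruction. For any $\pi\in\cF$, the character $\omega_\pi$ is trivial on $Z(F)$; combining this with $\omega_{\pi^{\mathfrak{S}}}(z)=1$ for $z\in K^{\mathfrak{S}}$ (from $K^{\mathfrak{S}}$-invariance of a vector in $\pi^{\mathfrak{S}}$) and $\omega_{\pi_{S_0}}(z)=1$ for $z\in K_{S_0}$ (from $K_{S_0}$-invariance) gives $\omega_{\pi_\infty}(z)\omega_{\pi_S}(z)=1$ for every $z\in H$. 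Since $\pi_S\simeq\sigma_S$ and $\pi_\infty\in\Pi_\infty(\xi)$ has central character $\omega_\xi^{-1}$ on $Z(F_\infty)$ by the duality built into the definition of $\Pi_\infty(\xi)$, this forces $\omega_\xi(z)=\omega_{\sigma_S}(z)$ on $H$, contradicting $(\xi,\sigma_S)\in\mathcal P_{\neq}$. Hence $\cF=\emptyset$ and $m(\xi,\sigma_S,K_{S_0})=0$ for every such pair, and the first limit holds trivially.
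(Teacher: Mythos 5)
Your overall strategy (plug $\phi^{\mathfrak{S}}=\triv_{K^{\mathfrak{S}}}$ into Theorem~\ref{t:disc-series-case}, recognize the central sum as a finite character sum, and separate the cases $\mathcal P_=$, $\mathcal P_{\neq}$) is the right one, and your treatment of $\mathcal P_{\neq}$ is in fact an improvement over the paper's terse ``readily follows.'' As you correctly observe, the theorem alone only gives $m(\xi,\sigma_S,K_{S_0})=O(\dim(\xi)^{1-\nu_\infty}\deg(\sigma_S)^{1-\nu_S})$ in the $\mathcal P_{\neq}$ case, which does not obviously tend to $0$; your central-character obstruction argument (automorphy forces $\omega_\pi|_{Z(F)}=1$, and $K_{S_0}$-, $K^{\mathfrak{S}}$-invariance together with $\omega_{\pi_\infty}=\omega_\xi^{-1}$ and $\pi_S\simeq\sigma_S$ forces $\omega_\xi=\omega_{\sigma_S}$ on $Z(F)\cap K_{S_0}K^{\mathfrak{S}}$) shows $\cF=\emptyset$ outright, which is the clean way to get the first limit. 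This is a genuinely different and more explicit route than the paper takes.

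There is, however, a gap in your $\mathcal P_=$ argument: you assert $\deg(\sigma_S)\ge 1$, which is unjustified. The formal degree depends on the choice of Haar measure on $G(F_S)$, and for a general normalization $\deg(\sigma_S)$ may well be less than $1$; so the bound $\dim(\xi)^{-\nu_\infty}\deg(\sigma_S)^{-\nu_S}\le \max(\dim(\xi),\deg(\sigma_S))^{-\min(\nu_\infty,\nu_S)}$ does not follow. The paper addresses precisely this point by invoking Harish-Chandra's result that formal degrees of discrete series of $G(F_S)$ lie in a discrete lattice and hence are bounded below by a positive constant $c>0$ (depending only on the measure). With that input the argument closes: if $\deg(\sigma_S)\le 1$ (hence $\in[c,1]$) along a subsequence with $\dim(\xi)\deg(\sigma_S)\to\infty$, then $\dim(\xi)\to\infty$ and $\deg(\sigma_S)^{-\nu_S}\le c^{-\nu_S}$ stays bounded, so the relative error still vanishes; if $\deg(\sigma_S)>1$, your bound applies. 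Equivalently, the paper's claim is that $\dim(\xi)\deg(\sigma_S)\to\infty$ is equivalent to $\dim(\xi)^{\nu_\infty}\deg(\sigma_S)^{\nu_S}\to\infty$, which is exactly what is needed and requires the Harish-Chandra lower bound when $\deg(\sigma_S)<1$. You should add this lower-bound ingredient to your argument.
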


\begin{rem}
  It is clear from the definition that $m(\xi,\sigma_S,K_{S_0})\ge 0$. This is consistent with the signs in the second formula above. Indeed the sign of the measure $\mu^{\EP}_\infty\mu^{\EP}_S $ is the same as that of $\tau'(G,S)$ since the canonical measure $\mu^{\can}_v$ is a positive measure for $v\notin S\cup S_\infty$. The signs of $\mu^{\EP}_\infty$ and $\mu^{\EP}_S$ are the same as those of $(-1)^{q(G_\infty)}$ and $\deg(\sigma_S)$, respectively.
\end{rem}

\begin{proof}
  We start by claiming that $\dim(\xi)\deg(\sigma_S)\ra\infty$ if and only if $\dim(\xi)^{\nu_\infty}\deg(\sigma_S)^{\nu_S}$ tends to infinity. To see this, we partition the set of all $\sigma_S$ into two sets where $|\deg(\sigma_S)|\ge 1$ and $|\deg(\sigma_S)|<1$. The claim is obvious in the first set. (Recall that $\nu_\infty,\nu_S>0$ and note that $\dim(\xi)\ge 1$.) In the second set, the claim follows from the fact \cite[Thm 7]{HC99} that $\deg(\sigma_S)$ is bounded below by a positive constant. (More precisely $\deg(\sigma_S)$ is an integral multiple of a constant depending only on a Haar measure on $G(F_S)$.)

  Now the corollary readily follows from the preceding theorem by plugging in $\kappa_v=0$ for all $v\notin {\mathfrak{S}}$ and $\phi^{\mathfrak{S}}=\triv_{K^{\mathfrak{S}}}$.
\end{proof}

  We can restate Theorem \ref{t:disc-series-case} in terms of $m(\xi,\sigma_S,K_{S_0})$, assuming $G$ is split and semisimple for simplicity. The $L^1$-norm of $\phi^{\mathfrak{S}}\in \cH^{\ur}(G(\A_F^{\mathfrak{S}}))$ is given by $\|\phi^{\mathfrak{S}}\|_1:=\int_{G(\A_F^{\mathfrak{S}})} \left|\phi^{\mathfrak{S}}(g)\right| d\mu^{\can,{\mathfrak{S}}}$.

\begin{cor}\label{c:disc-seires-case-restated} Suppose that $G$ is a split semisimple group over $F$. There exist constants $\epsilon,A>0$ such that for every $(\xi,\sigma_S)\in \mathcal P_=$ and for every $\phi^{\mathfrak{S}}=\triv_{K^{\mathfrak{S}} g K^{\mathfrak{S}}}\in \cH^{\ur}(G(\A_F^{\mathfrak{S}}))$ with some $g\in G(\A_F^{\mathfrak{S}})$,
 $$
  \sum_{\pi\in \cF(\xi,\sigma_S,K_{S_0})}
\tr \pi^{\mathfrak{S}}(\phi^{\mathfrak{S}})
=  \frac{m(\xi,\sigma_S,K_{S_0})}{|Z(F)\cap K_{S_0}K^{\mathfrak{S}}|}\sum_{z\in Z(F)\cap K_{S_0}} \phi^{\mathfrak{S}}(z)
+ O(m(\xi,\sigma_S,K_{S_0})^{1-\nu} \|\phi^{\mathfrak{S}}\|_1^{A} ).
$$
  The implicit constant in $O(\cdot)$ depends on $G$, $S$, $S_0$, $K_{S_0}$, and $C$ (but is independent of $\xi$, $\sigma_S$, and $\phi^{\mathfrak{S}}$).
\end{cor}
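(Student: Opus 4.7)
The plan is to derive the corollary by combining Theorem~\ref{t:disc-series-case} with the limit multiplicity Corollary~\ref{cor:limit-multiplicity}, while converting the error bound from the form $\dim(\xi)^{1-\nu_\infty}\deg(\sigma_S)^{1-\nu_S}Q^{A\underline{\kappa}}$ in the theorem to the form $m(\xi,\sigma_S,K_{S_0})^{1-\nu}\|\phi^{\mathfrak{S}}\|_1^{A}$ demanded by the corollary.

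First I would apply Theorem~\ref{t:disc-series-case} to the given $\phi^{\mathfrak{S}}=\triv_{K^{\mathfrak{S}} gK^{\mathfrak{S}}}$, writing $g=(g_v)_{v\notin\mathfrak{S}}$ via the Cartan decomposition as $g_v\in K_v^{\hs}\lambda_v(\varpi_v)K_v^{\hs}$ for dominant $\lambda_v\in X_*(A_v)$ with only finitely many $\lambda_v\neq 0$, and set $\kappa_v:=\|\lambda_v\|$. Since $(\xi,\sigma_S)\in\mathcal{P}_=$, the ratio $\omega_\xi(z)/\omega_{\sigma_S}(z)$ equals $1$ on $Z(F)\cap K_{S_0}K^{\mathfrak{S}}$, which contains every central element $z$ that can contribute to $\phi^{\mathfrak{S}}(z)$, so the central character factor drops out. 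Then I would invoke Corollary~\ref{cor:limit-multiplicity}, which is nothing but Theorem~\ref{t:disc-series-case} applied to $\triv_{K^{\mathfrak{S}}}$, to substitute the leading constant $(-1)^{q(G_\infty)}d(G_\infty)\dim(\xi)\deg(\sigma_S)\,\tau'(G,S)/\mu^{\can}_{S_0}(K_{S_0})$ by $m(\xi,\sigma_S,K_{S_0})/|Z(F)\cap K_{S_0}K^{\mathfrak{S}}|$; the resulting error of order $\dim(\xi)^{1-\nu_\infty}\deg(\sigma_S)^{1-\nu_S}$ is absorbed into the target error.

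To convert $Q^{A\underline{\kappa}}$ into $\|\phi^{\mathfrak{S}}\|_1^{A'}$ I would use the Macdonald volume formula
\[
\|\phi^{\mathfrak{S}}\|_1=\prod_v\vol(K_v^{\hs}\lambda_v(\varpi_v)K_v^{\hs})\asymp\prod_vq_v^{\langle 2\rho,\lambda_v\rangle},
\]
together with the elementary fact that $\langle 2\rho,\lambda_v\rangle$ is strictly positive and comparable to $\|\lambda_v\|=\kappa_v$ on the dominant chamber, the comparison constants depending only on the absolute root system of $G$. This yields $Q^{A\underline{\kappa}}\ll\|\phi^{\mathfrak{S}}\|_1^{A'}$ for some $A'$ depending on $A$ and $G$. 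For the spectral factor, Corollary~\ref{cor:limit-multiplicity} gives $m(\xi,\sigma_S,K_{S_0})\asymp\dim(\xi)\deg(\sigma_S)$ on $\mathcal{P}_=$, so any $\nu\le\min(\nu_\infty,\nu_S)$ produces $\dim(\xi)^{1-\nu_\infty}\deg(\sigma_S)^{1-\nu_S}\ll m(\xi,\sigma_S,K_{S_0})^{1-\nu}$.

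The main obstacle I foresee is the bookkeeping of the central contribution: the main term of Theorem~\ref{t:disc-series-case} is indexed by $z\in Z(F)\cap K_{S_0}$ with weights $\phi^{\mathfrak{S}}(z)$, whereas the asymptotic in Corollary~\ref{cor:limit-multiplicity} carries a factor $|Z(F)\cap K_{S_0}K^{\mathfrak{S}}|$. One must verify that, on the support of $\phi^{\mathfrak{S}}$, these reconcile to produce precisely $m(\xi,\sigma_S,K_{S_0})\sum_{z\in Z(F)\cap K_{S_0}}\phi^{\mathfrak{S}}(z)$ in the main term, rather than a form distorted by the double-coset data $g$. Since $G$ is split and semisimple, $Z(F)$ is a finite group of torsion elements and every $z\in Z(F)$ with $\phi^{\mathfrak{S}}(z)\neq 0$ automatically lies in $Z(F)\cap K_{S_0}K^{\mathfrak{S}}$, so the two sets match on the relevant support; writing this reconciliation out cleanly is the technical crux.
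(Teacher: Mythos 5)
Your approach is essentially the same as the paper's: apply Theorem~\ref{t:disc-series-case}, substitute the multiplicative constant using Corollary~\ref{cor:limit-multiplicity}, and convert $Q^{A\underline{\kappa}}$ to a power of $\|\phi^{\mathfrak{S}}\|_1$. Two points deserve attention.

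First, the $L^1$-norm conversion as written has a real gap. The Macdonald/Gross formula gives $\|\triv_{K_v\lambda_v(\varpi_v)K_v}\|_1 = q_v^{\lg\mu_v,\rho^\vee\rg}\cdot|(G/P_{\mu_v})(\F_{q_v})|/q_v^{\dim(G/P_{\mu_v})}$, so the local ratio lies in $[c^{-1},c]$ for a uniform constant $c>1$, but writing this as ``$\asymp$'' hides that the constant is \emph{per place}. As $\underline\kappa$ varies the number of places with $\lambda_v\neq 0$ is unbounded, so naively taking products gives $\prod_v q_v^{\lg\mu_v,\rho^\vee\rg}\le c^{\#\{v:\lambda_v\neq0\}}\|\phi^{\mathfrak{S}}\|_1$, with an unbounded prefactor. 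The paper fixes this by observing that for $\lambda_v\neq 0$ one has $\|\triv_{K_v\lambda_v(\varpi_v)K_v}\|_1\ge 2$, so raising each local factor to the power $1+c'$ with $c':=\log c/\log 2$ absorbs the constant $c$ locally, yielding the uniform inequality $\|\phi^{\mathfrak{S}}\|_1^{1+c'}\ge Q^{\underline{\kappa}}$. You should carry this step out explicitly; the ``$\asymp$'' alone does not deliver the corollary.

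Second, your concern about the factor $|Z(F)\cap K_{S_0}K^{\mathfrak{S}}|$ is well-founded, and in fact the reconciliation does \emph{not} go through in the way you hope. Substituting via Corollary~\ref{cor:limit-multiplicity} yields a main term $\frac{m(\xi,\sigma_S,K_{S_0})}{|Z(F)\cap K_{S_0}|}\sum_{z\in Z(F)\cap K_{S_0}}\phi^{\mathfrak{S}}(z)$ (using $Z(F)\cap K_{S_0}=Z(F)\cap K_{S_0}K^{\mathfrak{S}}$ since the finite center lies in every hyperspecial $K_v$). For $\phi^{\mathfrak{S}}=\triv_{K^{\mathfrak{S}}}$ the left-hand side is exactly $m(\xi,\sigma_S,K_{S_0})$ while the stated main term is $m(\xi,\sigma_S,K_{S_0})\cdot|Z(F)\cap K_{S_0}|$, which cannot be absorbed into a power-saving error. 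So the corollary's main term as printed is only correct when $|Z(F)\cap K_{S_0}|=1$; the paper's proof glosses over the extra factor $|Z(F)\cap K_{S_0}|$ in the intermediate estimate. (Note the introduction does in fact restrict to split semisimple $G$ with trivial center; the corollary omits that hypothesis.) Your instinct to flag this as ``the technical crux'' was right, but the resolution is to correct the statement (insert $|Z(F)\cap K_{S_0}|^{-1}$ or impose $Z(F)\cap K_{S_0}=\{1\}$), not to find a reconciliation. On the other hand your handling of the estimate $\sum_{z\in Z(F)\cap K_{S_0}}\phi^{\mathfrak{S}}(z)=O(1)$ via finiteness of $Z(F)$ is cleaner than the paper's appeal to \cite[Prop 8.7]{ST:Sato-Tate}, which is designed for the non-semisimple case.
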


\begin{proof}
  Since $G$ has finite center, the center is contained in every maximal compact subgroup. So $Z(F)\cap K_{S_0}=Z(F)\cap K_{S_0}K^{\mathfrak{S}}$ and $\omega_{\sigma_S}=\omega_\xi$ on $Z(F)\cap K_{S_0}$ for $(\xi,\sigma_S)\in \mathcal P_=$. Our task is to turn the right hand side of \eqref{e:in-thm-disc-series-case} to the right hand side as in the corollary. Let $\underline\kappa=(\kappa_v)_{v\notin {\mathfrak{S}}}$ be chosen such that $\phi^{\mathfrak{S}}\in \cH^{\ur,\le\underline\kappa}(G(\A_F^{\mathfrak{S}}))$. We know that
  $$\left| m(\xi,\sigma_S,K_{S_0}) - (-1)^{q(G_\infty)}d(G_\infty)\dim (\xi)\deg(\sigma_S)\frac{\tau'(G,S)|Z(F)\cap K_{S_0}K^{\mathfrak S}|}{\mu^{\can}_{S_0}(K_{S_0})}\right|$$
  $$=~ O(\dim(\xi)^{1-\nu_\infty} \deg(\sigma_S)^{1-\nu_S})$$
  from Theorem \ref{t:disc-series-case}, cf. the proof of Corollary \ref{cor:limit-multiplicity}. So it is enough to show that
   for some constants $A',B'>0$ (whose independence is as in the corollary),
  \begin{enumerate}
    \item $\sum_{z\in Z(F)\cap K_{S_0}} \phi^{\mathfrak{S}}(z) =O( Q^{B'\underline\kappa})$,
    \item $Q^{A\underline\kappa}\le \|\phi^{\mathfrak{S}}\|_1^{A'}$.
  \end{enumerate}

  Part (i) is equivalent to $|Z(F)\cap K_{S_0}(K^{\mathfrak{S}} g K^{\mathfrak{S}})K_\infty|=O( Q^{A'\underline\kappa})$ for any maximal compact subgroup $K_\infty\subset G(F_\infty)$. This immediately follows from \cite[Prop 8.7]{ST:Sato-Tate}.

  It remains to check (ii).
  We adopt the notation about truncated Hecke algebras from \cite[\S2.3]{ST:Sato-Tate}. By Cartan decomposition we may write $g=(g_v)_{v\notin {\mathfrak{S}}} \in G(\A_F^{\mathfrak{S}})$ with $g_v=\mu_v(\varpi_v)$ for the cocharacter $T_v$ of a maximal $F_v$-split torus $T_v$ of $G$ and a uniformizer $\varpi_v$ of $F_v$. We take $\underline\kappa=(\kappa_v)_{v\notin {\mathfrak{S}}}$ such that $\kappa_v=\|\mu_v\|_{\mathcal B}$ for a suitable $\R$-basis $\mathcal B=\{e_1,...,e_r\}$ of $X_*(T)\otimes_\Z \R$, where $X_*(T)$ is the cocharacter group of a maximal torus $T$ over $\ol{F}$. (Thus $r=\dim T=\rk G$.) Of course $\kappa_v=0$ for all but finitely many $v$. Choose a Borel subgroup containing $T$ so that we have a set of positive coroots $\Phi^{\vee,+}$ for $T$. We take $\mathcal B$ to consist of simple coroots in $\Phi^{\vee,+}$. Similarly we choose a Borel subgroup $B_v\supset T_v$ and a set of positive coroots $\Phi^{\vee,+}_v$ for $T_v$. Without loss of generality we assume that $\mu_v$ is $B_v$-dominant. Set $\rho_v^\vee:=\sum_{\alpha^\vee\in \Phi^{\vee,+}_v}\alpha^\vee$. The equality $\kappa_v=\|\mu_v\|_{\mathcal B}$ means that $\kappa_v=\lg \mu_v,i \alpha^\vee\rg$ for some $\alpha^\vee\in \Phi^{\vee,+}$, where $i$ is an inner automorphism of $G$ sending $T$ to $T_v$. Hence for each $v\notin {\mathfrak{S}}$, we see that $\lg \mu_v,\beta_v^\vee\rg=\kappa_v$ for some coroot $\beta_v\in \Phi^{\vee,+}_v$.

  We claim that there exists a constant $c>1$ such that for every $v\notin {\mathfrak{S}}$ we have
  $$\|\triv_{K_v g_v K_v} \|_1 \ge c^{-1} q^{\lg \mu_v,\rho^\vee\rg}.$$
  Indeed \cite[Prop 7.4]{Gro98} tells us that $\|\triv_{K_v g_v K_v} \|_1/q^{\lg \mu_v,\rho^\vee\rg}$ is equal to $|(G/P_{\mu_v})(\F_{q_v})|/q_v^{\dim (G/P_{\mu_v})}$ for a suitable parabolic subgroup $P_{\mu_v}$ of $G$. Of course there are finitely many parabolic subgroups (up to conjugation). We see that the quotient (which is a quotient of two polynomials in $q_v$) tends to one as $q_v\ra\infty$. The claim follows.

  For each $v$ such that $\mu_v\neq 0$ (so $K_v g_v K_v\neq K_v$) we have $\|\triv_{K_v g_v K_v} \|_1\ge 2= c^{\log 2/\log c}$. Setting $c':=\log c/\log 2\in \R_{>0}$, we have (whether $\mu_v=0$ or not)
  $$\|\triv_{K_v g_v K_v} \|^{1+c'}_1 \ge  q^{\lg \mu_v,\rho^\vee\rg},\quad \forall v\notin {\mathfrak{S}}.$$
  Since $\mu_v$ is $B_v$-dominant, it is clear that $\lg \mu_v,\rho^\vee\rg\ge \lg \mu_v,\beta_v^\vee\rg=\kappa_v$.  In conclusion
  $$\|\phi^{\mathfrak{S}} \|^{1+c'}_1 \ge  Q^{\underline{\kappa}}.$$
  The proof of (ii) is complete.
\end{proof}

  Let us record a sample application to the existence of certain automorphic representations. For simplicity we assume that $G$ is split over $F$. We fix a reductive model over the ring of integers $\cO_F$, giving rise to hyperspecial subgroups $K_v^{\hs}$ at each finite place $v$.

\begin{cor}\label{c:existence-supercuspidal} Suppose that $G$ is a split reductive group over $F$.
Fix $\xi\in \Irr^{\reg}(G(F_\infty))$ and $S$ a nonempty finite set of finite places. Suppose that the residue characteristic of each $v\in S$ is sufficiently large in the sense at the start of \S\ref{sub:equidistribution}. Then there exists $d_0>0$ with the following property: For every $\pi^0_S\in \Irr^{\Yu}(G(F_S))$ with $|\deg(\pi^0_S)|\ge d_0$ and $\omega_{\pi^0_S}=\omega_\xi$ on $Z(F)\cap K_{S_0}K^{\mathfrak{S}}$, there exists a cuspidal automorphic representation $\pi$ of $G(\A_F)$ such that
  \begin{itemize}
    \item $\pi_S\simeq \pi^0_S$,
        \item $\pi_\infty\in \Pi_\infty(\xi)$,
    \item $\pi^{S,\infty}$ is unramified.
      \end{itemize}
\end{cor}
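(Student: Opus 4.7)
The plan is to deduce this from the limit multiplicity formula of Corollary \ref{cor:limit-multiplicity}, applied with $\xi$ fixed and $\deg(\pi^0_S)\to\infty$. First, Kim's exhaustion theorem \cite{Kim07} (which applies under the standing hypothesis on the residue characteristic at each $v\in S$) identifies $\Irr^{\scusp}(G(F_S))$ with $\Irr^{\Yu}(G(F_S))$, so $\pi^0_S$ falls within the scope of that corollary.

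Since $G$ is unramified at every finite place outside $S$, I would take $S_0=\emptyset$ in the setup of \S\ref{sub:counting-measures}, so that $\fkS=S\cup S_\infty$, $K_{S_0}$ is trivial with $\mu^{\can}_{S_0}(K_{S_0})=1$, and $K^{\fkS}$ is the product of hyperspecial subgroups $K_v^{\hs}$ for $v\notin\fkS$. With this choice, the multi-set $\cF(\xi,\pi^0_S,K_{S_0})$ consists precisely of cuspidal automorphic representations $\pi$ with $\pi_S\simeq\pi^0_S$, $\pi_\infty\in\Pi_\infty(\xi)$, and $\pi^{S,\infty}$ unramified, each counted with multiplicity $m_\disc(\pi)$. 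Set $C:=\max(\xi)/\min(\xi)$ so that $\xi\in\Irr^{\reg}_C(G(F_\infty))$, and note that the central character hypothesis places $(\xi,\pi^0_S)$ in $\mathcal P_=$.

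Applying Corollary \ref{cor:limit-multiplicity} to any sequence of $\pi^0_S$ with $\deg(\pi^0_S)\to\infty$ (recall that $\dim(\xi)\ge 1$ is fixed and nonzero), we obtain
$$\frac{m(\xi,\pi^0_S,K_{S_0})}{d(G_\infty)\dim(\xi)\deg(\pi^0_S)}\;\longrightarrow\;(-1)^{q(G_\infty)}\,\tau'(G,S)\cdot|Z(F)\cap K^{\fkS}|.$$
The right-hand side is a strictly positive constant: as explained in the remark following Corollary \ref{cor:limit-multiplicity}, the sign $(-1)^{q(G_\infty)}$ matches that of $\tau'(G,S)$ by the formulas \cite[Thm 9.9, (10.4)]{Gro97}, and the cardinality factor is a positive integer. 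Consequently there exists $d_0>0$, depending only on $\xi$, $G$, and $S$, such that $m(\xi,\pi^0_S,K_{S_0})>0$ whenever $\deg(\pi^0_S)\ge d_0$, and any $\pi$ contributing to this sum has the stated properties.

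The only subtle point is the sign analysis ensuring positivity of the leading constant; no new local computation beyond what is already subsumed in Corollary \ref{cor:limit-multiplicity} (and hence Theorem \ref{t:asymptotic-orb-int}) is needed. The main obstacle, namely the uniform bound on $O_\gamma(f_{\pi^0_S})$ for noncentral $\gamma$, is already handled inside the proof of Theorem \ref{t:disc-series-case}, so the present corollary reduces to a specialization of the limit formula.
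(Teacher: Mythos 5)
Your strategy is exactly the one the paper uses: specialize the limit multiplicity formula of Corollary~\ref{cor:limit-multiplicity} with $\xi$ fixed, invoke Kim's exhaustion theorem so that $\pi^0_S\in\Irr^{\Yu}(G(F_S))$, and observe that the nonvanishing of the leading constant forces $\cF(\xi,\pi^0_S,K_{S_0})$ to be nonempty once $\deg(\pi^0_S)$ is large. The sign bookkeeping is essentially right (though to be precise, the remark after Corollary~\ref{cor:limit-multiplicity} gives $\mathrm{sign}\,\tau'(G,S)=(-1)^{q(G_\infty)}\,\mathrm{sign}\,\deg(\sigma_S)$, so the claim that $(-1)^{q(G_\infty)}$ ``matches'' $\tau'(G,S)$ is only true up to the sign of $\deg(\sigma_S)$; but the product $(-1)^{q(G_\infty)}\tau'(G,S)\deg(\sigma_S)$ is positive in any case, which is what matters).

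The one genuine misstep is taking $S_0=\emptyset$. Corollary~\ref{cor:limit-multiplicity} is proved under the standing hypotheses at the start of \S\ref{sub:equidistribution}, the second of which requires $S\cup S_0$ to contain all places of small residue characteristic (so that the Cluckers--Gordon--Halupczok bound applies outside $S\cup S_0$). Since the places in $S$ are required to have \emph{large} residue characteristic, $S$ alone cannot contain the small-residue-characteristic places, and with $S_0=\emptyset$ the standing hypotheses fail. The paper instead takes $S_0$ to be a finite set large enough to contain all places of small residue characteristic, with $K_{S_0}=\prod_{v\in S_0}K_v^{\hs}$ hyperspecial; then $(\pi_{S_0})^{K_{S_0}}\neq 0$ is equivalent to $\pi_{S_0}$ unramified, and together with $\pi^{\mathfrak{S}}$ unramified one still concludes that $\pi^{S,\infty}$ is unramified. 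Alternatively you could keep $S_0=\emptyset$ but would then need to explicitly invoke the remark after Theorem~\ref{t:disc-series-case}, which says that for \emph{fixed} $\underline\kappa$ and $\phi^{\mathfrak{S}}$ (here $\underline\kappa=0$, $\phi^{\mathfrak{S}}=\triv_{K^{\mathfrak{S}}}$) the asymptotic formula, and hence Corollary~\ref{cor:limit-multiplicity}, holds under the weaker hypothesis that only Theorem~\ref{t:asymptotic-orb-int} applies at the places of $S$. Without one of these two fixes, the invocation of Corollary~\ref{cor:limit-multiplicity} is not literally justified.
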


\begin{proof}
  In the preceding corollary, it is enough to take $S_0$ to be sufficiently large and contain all places of small residue characteristics, and set $K_{S_0}:=\prod_{v\in S_0} K_v^\hs$. Then fix $\xi$ and let $\deg(\sigma_S)$ go to infinity.
\end{proof}

  According to Corollary \ref{cor:limit-multiplicity}, it is reasonable to restrict our attention to $(\xi,\sigma_S)\in \mathcal P_=$ when studying equidistribution problems on the following counting measure for the multi-set $\cF=\cF(\xi,\sigma_S,K_{S_0})$, where $\delta_{\pi^{{\mathfrak{S}}}}$ denotes the Dirac delta measure supported at $\pi^{\mathfrak{S}}$ on the unramified unitary dual $G(\A_F^{\mathfrak{S}})^{\wedge,\ur}:=\prod_{v\notin {\mathfrak{S}}} G(F_v)^{\wedge,\ur}$:
  $$\hat\mu^{\cnt}_{\cF}:=\frac{1}{|\cF|} \sum_{\pi\in \cF} \delta_{\pi^{{\mathfrak{S}}}}.$$
  Of course this makes sense if $|\cF|\neq 0$. To obtain a clean formula we will further assume that $Z(F)\cap K_{S_0}=Z(F)\cap K_{S_0}K^{\mathfrak{S}}$. (Alternatively we may instead restrict to the pairs $(\xi,\sigma_S)\in \mathcal P_=$ such that $\omega_\xi=\omega_{\sigma_S}$ on $Z(F)\cap K_{S_0}$.)

  Given $z\in Z(F)$ let $\omega_z$ denote the function on $G(\A_F^{\mathfrak{S}})^{\wedge,\ur}$ whose value on each representation is its central character evaluated at $z$. Define a measure $\hat\mu^{\mathrm{pl},\ur,{\mathfrak{S}}}_z$ on $G(\A_F^{\mathfrak{S}})^{\wedge,\ur}$ to be $\omega_z\cdot \hat\mu^{\mathrm{pl},\ur,{\mathfrak{S}}}$, where $\hat\mu^{\mathrm{pl},\ur,{\mathfrak{S}}}:=\prod_{v\notin {\mathfrak{S}}} \hat\mu^{\mathrm{pl},\ur}_v$ is the product of the Plancherel measure  $\hat\mu^{\mathrm{pl},\ur}_v$ on the unramified unitary dual of $G(F_v)$.\footnote{Since we consider only those $\hat\phi^{\mathfrak{S}}$ coming from $\cH^{\ur}(G(\A_F^{\mathfrak{S}}))$, the formulas remain valid if we use the Plancherel measure on the whole unitary dual.} Recall that $\phi^{\mathfrak{S}}$ defines a function $\hat\phi^{\mathfrak{S}}:\pi\mapsto \tr \pi(\phi^{\mathfrak{S}})$ on $\prod_{v\notin {\mathfrak{S}}} G(F_v)^{\wedge,\ur}$.
 Note that we can integrate $\hat\phi^{\mathfrak{S}}$ against the (possibly infinite) sum measure $\hat\mu^{\mathrm{pl},\ur,{\mathfrak{S}}}_{Z(F)\cap K_{S_0}}:=\sum_{z\in Z(F)\cap K_{S_0}} \hat\mu^{\mathrm{pl},\ur,{\mathfrak{S}}}_z$. Indeed $\hat\mu^{\mathrm{pl},\ur,{\mathfrak{S}}}_z(\hat\phi^{\mathfrak{S}})$ is nonzero for only finitely many $z\in Z(F)\cap K_{S_0}$ since $Z(F)$ intersects an open compact subset of $Z(\A_F)$ at only finitely many points.

\begin{cor} Assume that $Z(F)\cap K_{S_0}=Z(F)\cap K_{S_0}K^{\mathfrak{S}}$. For every $\phi^{\mathfrak{S}}\in \cH^{\ur}(G(\A_F^{\mathfrak{S}}))$,
  $$\lim_{\dim(\xi)\deg(\sigma_S)\ra\infty\atop (\xi,\sigma_S)\in \mathcal P_{=}} \hat\mu^{\cnt}_{\cF(\xi,\sigma_S,K_{S_0})} (\hat \phi^{\mathfrak{S}})=\frac{\hat\mu^{\mathrm{pl},\ur,{\mathfrak{S}}}_{Z(F)\cap K_{S_0}}(\hat \phi^{\mathfrak{S}})}{|Z(F)\cap K_{S_0}K^{\mathfrak{S}}|}.$$
 (The counting measure is defined when $\dim(\xi)|\deg(\sigma_S)|\gg 1$ by Corollary \ref{cor:limit-multiplicity}.)
\end{cor}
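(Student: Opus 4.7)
The plan is to combine Theorem~\ref{t:disc-series-case} (which controls the numerator $\sum_{\pi\in\cF}\tr\pi^{\mathfrak{S}}(\phi^{\mathfrak{S}})$) with Corollary~\ref{cor:limit-multiplicity} (which controls the denominator $|\cF|=m(\xi,\sigma_S,K_{S_0})$), and then identify the resulting ``central'' sum with the integral against $\hat\mu^{\mathrm{pl},\ur,{\mathfrak{S}}}_{Z(F)\cap K_{S_0}}$ via the spherical Plancherel inversion formula.

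First, since $\phi^{\mathfrak{S}}\in \cH^{\ur}(G(\A_F^{\mathfrak{S}}))$ is compactly supported and bi-$K^{\mathfrak{S}}$-invariant, I would expand it as a finite $\C$-linear combination $\phi^{\mathfrak{S}}=\sum_j c_j\triv_{K^{\mathfrak{S}} g_j K^{\mathfrak{S}}}$ so that Theorem~\ref{t:disc-series-case} applies termwise with a common exponent bound controlled by the union of supports. This choice of $\underline\kappa$ is fixed independently of $(\xi,\sigma_S)$, so $Q^{A\underline\kappa}$ is a constant and the error on the right-hand side of \eqref{e:in-thm-disc-series-case} is $O(\dim(\xi)^{1-\nu_\infty}\deg(\sigma_S)^{1-\nu_S})$.

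Next, I would restrict to $(\xi,\sigma_S)\in\mathcal P_=$ so that $\omega_\xi(z)/\omega_{\sigma_S}(z)=1$ for $z\in Z(F)\cap K_{S_0}=Z(F)\cap K_{S_0}K^{\mathfrak{S}}$. The main term in Theorem~\ref{t:disc-series-case} becomes
\[
(-1)^{q(G_\infty)}d(G_\infty)\dim(\xi)\deg(\sigma_S)\frac{\tau'(G,S)}{\mu^{\can}_{S_0}(K_{S_0})}\sum_{z\in Z(F)\cap K_{S_0}}\phi^{\mathfrak{S}}(z),
\]
and Corollary~\ref{cor:limit-multiplicity} gives that $m(\xi,\sigma_S,K_{S_0})$ is asymptotic to the same expression with the sum $\sum_z \phi^{\mathfrak{S}}(z)$ replaced by $|Z(F)\cap K_{S_0}K^{\mathfrak{S}}|$. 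Since $\dim(\xi)^{\nu_\infty}\deg(\sigma_S)^{\nu_S}\to\infty$ under the hypothesis $\dim(\xi)\deg(\sigma_S)\to\infty$ (by the argument recalled in the proof of Corollary~\ref{cor:limit-multiplicity}), dividing and passing to the limit yields
\[
\lim \mu^{\cnt}_{\cF(\xi,\sigma_S,K_{S_0})}(\hat\phi^{\mathfrak{S}})=\frac{\sum_{z\in Z(F)\cap K_{S_0}}\phi^{\mathfrak{S}}(z)}{|Z(F)\cap K_{S_0}K^{\mathfrak{S}}|}.
\]

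Finally, I would invoke the spherical Plancherel inversion formula: for a central element $z$ and $\phi^{\mathfrak{S}}\in\cH^{\ur}(G(\A_F^{\mathfrak{S}}))$, the value $\phi^{\mathfrak{S}}(z)$ equals $\int \omega_\pi(z)\,\hat\phi^{\mathfrak{S}}(\pi)\,d\hat\mu^{\mathrm{pl},\ur,{\mathfrak{S}}}(\pi)=\hat\mu^{\mathrm{pl},\ur,{\mathfrak{S}}}_z(\hat\phi^{\mathfrak{S}})$ (up to the bijection $z\leftrightarrow z^{-1}$ on the finite group $Z(F)\cap K_{S_0}$, which is irrelevant after summation). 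Summing over $z\in Z(F)\cap K_{S_0}$ produces $\hat\mu^{\mathrm{pl},\ur,{\mathfrak{S}}}_{Z(F)\cap K_{S_0}}(\hat\phi^{\mathfrak{S}})$, which combined with the preceding identity gives the statement. The only real points of care are (i) checking that the Plancherel convention matches the definition of $\hat\mu^{\mathrm{pl},\ur,{\mathfrak{S}}}_z$ given just before the corollary, and (ii) verifying that the error term remains $o(\dim(\xi)\deg(\sigma_S))$ after multiplying by the (uniformly bounded) coefficients $c_j$; both are routine once the decomposition and the Plancherel formula are in place, and I expect no serious obstacle beyond bookkeeping.
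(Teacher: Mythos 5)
Your proof is correct and follows essentially the same route as the paper: apply Theorem~\ref{t:disc-series-case} to express the normalized sum, divide by $|\cF|=m(\xi,\sigma_S,K_{S_0})$ using Corollary~\ref{cor:limit-multiplicity}, and identify $\sum_{z\in Z(F)\cap K_{S_0}}\phi^{\mathfrak{S}}(z)$ with $\hat\mu^{\mathrm{pl},\ur,{\mathfrak{S}}}_{Z(F)\cap K_{S_0}}(\hat\phi^{\mathfrak{S}})$ by Plancherel inversion. The only addition you make explicit that the paper leaves implicit is the decomposition of a general $\phi^{\mathfrak{S}}\in\cH^{\ur}$ into a finite linear combination of characteristic functions of $K^{\mathfrak{S}}$-double cosets before invoking Theorem~\ref{t:disc-series-case}, which is correct and harmless since the estimates are linear in $\phi^{\mathfrak{S}}$ for a fixed $\underline\kappa$.
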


\begin{proof}
  Choose $\underline\kappa\ge 0$ such that $\phi^{\mathfrak{S}}\in \cH^{\ur,\le \underline\kappa}(G(\A_F^{\mathfrak{S}}))$.
  By \eqref{e:in-thm-disc-series-case}, $\hat \mu^{\cnt}_{\cF(\xi,\sigma_S,K_{S_0})} (\hat \phi^{\mathfrak{S}})$ equals
    $$(-1)^{q(G_\infty)}\frac{\tau'(G,S)d(G_\infty)\dim (\xi)\deg(\sigma_S)}{\mu^\can_{S_0}(K_{S_0})|\cF(\xi,\sigma_S,K_{S_0})|} \sum_{z\in Z(F)\cap K_{S_0}} \phi^{\mathfrak{S}}(z)
  + O\left(\frac{\deg(\sigma_S)^{1-\nu_S}\dim\xi^{1-\nu_\infty}}{|\cF(\xi,\sigma_S,K_{S_0})|} Q^{A\underline{\kappa}} \right).$$
  By Plancherel theorem, $\sum_{z\in Z(F)\cap K_{S_0}} \phi^{\mathfrak{S}}(z)=\hat\mu^{\mathrm{pl},\ur,{\mathfrak{S}}}_{Z(F)\cap K_{S_0}}(\hat \phi^{\mathfrak{S}})$.
  We apply Corollary \ref{cor:limit-multiplicity} to finish the proof.
  \end{proof}

  \begin{rem}\label{r:ST-equidistribution}
  In particular if $Z(F)\cap K_{S_0}=\{1\}$ (e.g. if $Z(F)$ is trivial) then we have
    $$\lim_{\dim(\xi)+\deg(\sigma_S)\ra\infty\atop (\xi,\sigma_S)\in \mathcal P_{=}} \hat\mu^{\cnt}_{\cF(\xi,\sigma_S,K_{S_0})} (\hat \phi^{\mathfrak{S}})
    =\hat\mu^{\mathrm{pl},\ur,{\mathfrak{S}}}(\hat \phi^{\mathfrak{S}}).$$
    This confirms Conjecture 1 in \cite{SST}, or more precisely its analogue as explained in the remark below it. In our case the limiting measure is the product of the unramified Plancherel measures so (i) and (ii) of the conjecture are true. Part (iii) is essentially \cite[Prop 5.3]{ST:Sato-Tate}, from which (iv) follows immediately.
  \end{rem}

  \begin{rem}\label{r:nontrivial-center}
  The results above should carry over to the case where neither $G(F_S)$ nor
  $G(F_\infty)$ has compact center, at least if $G$ is a cuspidal group in the
  sense that the center of $\Res_{F/\Q}G$ has the same split $\Q$-rank and split
  $\R$-rank. This requires some modification in the statements (e.g.
  pseudo-coefficients of a supercuspidal representation have compact support
  only modulo center) but would not lead to any significant change in the
  proof.  Alternatively one could work with the trace formula with fixed
  central character (one could use \cite[\S6]{Dal19} for instance), in which case representations and test functions also have
  fixed central characters which are inverses of each other.
  \end{rem}

\subsection{Possible generalizations} It is sensible to ask whether the method of this paper applies to non-supercuspidal discrete series representations $\pi$ but there are difficulties. In that case we still have a somewhat explicit construction of pseudo-coefficients for $\pi$, cf. \cite[\S3.4]{SS97} but they are not as simple as in \S\ref{sub:explicit-coeff} to be useful. In the trace formula, if we impose a pseudo-coefficient of $\pi$ at a local place $v$ then one still has the simple trace formula, but the spectral side picks up automorphic representations whose $v$-components are not only $\pi$ but possibly nontempered representations in a finite list. This means that one has to control these spectral error terms. Alternatively one could allow more general test functions at $v$ but then the trace formula will have more terms to be dealt with (unless the global reductive group is anisotropic modulo center).

\section{Steinberg representations and horizontal families}\label{s:Steinberg}

The results in this section do not rely on Sections \ref{s:Yu-construction} through \ref{s:proof}. However the reader will see a strong analogy both in the statements and proofs between the vertical families in the last section and the horizontal families in this one.

Let $\xi$, $S$, $S_0$, $S_\infty$, and $\fkS$ be as in \S\ref{sub:counting-measures}, cf. \S\ref{s:quantitative}. In particular the finite sets $S$, $S_0$, and $S_\infty$ are mutually disjoint, and $G$ is unramified outside $\fkS$. We will fix $S_0$ and assume that
\begin{itemize}
  \item $S\cap \Ram(G)=\emptyset$  (so that $\Ram(G)\subset S_0$),
  \item the residue characteristic of every $v\in S$ is sufficiently large.
\end{itemize}
Throughout this section we keep a large residue characteristic assumption as in \S\ref{sub:equidistribution}:
\begin{itemize}
  \item $S\cup S_0$ contains the places of $F$ with small residue characteristics such that the result by Cluckers--Gordon--Halupczok on uniform bound on orbital integrals \cite[Thm 14.1]{ST:Sato-Tate} applies to places outside $S\cup S_0$.
\end{itemize}
We also assume that (to use Lemma \ref{l:1-dim} relying on strong approximation)
\begin{itemize}
  \item the simply connected cover of $G^{\mathrm{der}}$ has no $F$-simple factor that is $F_v$-anisotropic for every $v\in S$.
\end{itemize}

Let $\cF(\xi,\mathrm{St}_S,K_{S_0})$ denote the multi-set of $\pi\in \cA_{\disc}(G)$, with $m_{\mathrm{disc}}(\pi)\dim (\pi_{S_0})^{K_{S_0}} $ as the multiplicity of $\pi$, which satisfies the following conditions: first, $\pi_\infty\in \Pi_\infty(\xi)$; second, $\pi_v$ is isomorphic to the Steinberg representation for all $v\in S$; third, $\pi_v$ is unramified for all finite
places $v\not\in S\cup S_0$.  In this section we study $\cF(\xi,\mathrm{St}_S,K_{S_0})$ as we vary the set $S$. The situation is somewhat
complementary to that in the previous section. We refer to $\cF(\xi,\mathrm{St}_S,K_{S_0})$ as a \emph{horizontal family}.\footnote{Even though $\xi$ is allowed to vary ``vertically'' (as in the last section), the main novel feature is to allow $S$ to vary, so the family deserves the name.}

Kottwitz~\cite{Kot88} constructed Euler-Poincar\'e functions for $p$-adic groups. For any place $v\in S$, we denote it by $\phi^{\EP}_{v}\in C_c^\infty(G(F_v))$.
We have that~\cite[Thm~2]{Kot88}
\begin{equation}\label{e:orb-int-EP}
O_\gamma(\phi^{\EP}_v)=
\begin{cases}1, & \gamma\in G(F_v)_{\elp},\\
0, & \gamma\in G(F_v)_{\semis}\backslash G(F_v)_{\elp}.
\end{cases}
\end{equation}
  We will assume that
\begin{itemize}
  \item $G$ is a simple\footnote{Such a group is often said to be absolutely almost simple, e.g. in \cite{Gross:prescribed}.} algebraic group, i.e. every proper normal subgroup of $G$ over an algebraic closure of $F$ is finite.
\end{itemize}
In that case Casselman's theorem, cf.~\cite[Thm 2]{Cas} and~\cite[Thm~2']{Kot88}, tells us that an irreducible unitary representation $\pi_v$ with $\tr \pi_v(\phi^{\EP}_v)\neq 0$ can occur only in the following two cases (which may not be mutually exclusive):
\begin{enumerate}[(i)]
\item $\pi_v$ is the trivial representation and $\tr \pi_v(\phi^{\EP}_v)=1$, or
\item $\pi_v\simeq \St_v$ and $\tr \pi_v(\phi^{\EP}_v)=(-1)^{q(G_v)}$, where $q(G_v)$ is the semisimple rank of $G_{F_v}$. In particular $f_{\St_v}:=(-1)^{q(G_v)}\phi^{\EP}_v$ is a pseudo-coefficient for $\St_v$.
\end{enumerate}
Set $f_{\St_S}:=\prod_{v\in S} f_{\St_v}$, $q_S:= \prod_{v\in S} q_v$, and $q(G_S):=\sum_{v\in S}q(G_v)$.

\begin{remark}
The formal degree of $\St_v$ is equal to $(-1)^{q(G_v)}$ for the Euler-Poincar\'e measure by the Plancherel theorem, (ii) above, and $
\phi^{\EP}_v(1)=1$ recalled above. See also~\cite{Borel:Iwahori}. %
\end{remark}
We will state the analogue of Lemma \ref{l:LHS=T_ell} for $\cF(\xi,\St_S,K_{S_0})$ after recalling the following well-known result on one-dimensional automorphic representations.
\begin{lem}\label{l:1-dim}
  Let $H$ be a connected reductive group over a number field $E$. Let $v$ be a place of $E$, and assume that the simply connected cover of $H^{\mathrm{der}}$ has no $E$-simple factor that is $E_v$-anisotropic (in particular, $H(E_v)$ is not compact modulo center). If $\pi$ is an automorphic representation of $H(\A_E)$ with $\dim\pi_v=1$ then $\pi$ is one-dimensional.
\end{lem}

\begin{proof}
  Using a $z$-extension of $H$ (\cite[\S1]{Kot82}) we can reduce to the case when the derived subgroup $H^{\mathrm{der}}$ of $H$ is simply connected. Now the groups $H^{\mathrm{der}}(E)$ and $H^{\mathrm{der}}(E_v)$ are subgroups of $H(\A_E)$ so they act on $\pi$ viewed as a subspace of the regular representation of $H(\A_E)$ on automorphic forms of $H(\A_E)$ (with a fixed central character). The one-dimensionality of $\pi_v$ implies that $H^{\mathrm{der}}(E_v)$ acts trivially on $\pi$. Then any smooth vector of $\pi$, as a function $\phi: H(\A_E)\ra \C$, has the property that $\phi(h_0 h h_v)=\phi(h)$ for $h_0\in H^{\mathrm{der}}(E)$, $h\in H(\A_E)$, and $h_v\in H^{\mathrm{der}}(E_v)$. Hence $\phi(hh')=\phi(h)$ for $h'\in h^{-1} H^{\mathrm{der}}(E) h H^{\mathrm{der}}(E_v)=h^{-1} H^{\mathrm{der}}(E)H^{\mathrm{der}}(E_v) h$.
  By the strong approximation theorem, $h'$ runs over a dense subset of $H^{\mathrm{der}}(\A_E)$. Therefore $\phi$ is constant on left $H^{\mathrm{der}}(\A_E)$-cosets, i.e. $H(\A_E)$ acts through an abelian quotient. Hence $\pi$ is one-dimensional.
\end{proof}

\begin{lem}\label{l:LHS=T_ell-Stcase} Let $\mathfrak{S}:=S_\infty\cup S_0\cup S$ and $\phi^{\mathfrak{S}}\in \cH^{\ur}(G(\A_F^{\mathfrak{S}}))$. Put $\phi:=\phi^{\mathfrak{S}} f_{\St_S} \triv_{K_{S_0}} f_\xi$. Then
  $$\sum_{\pi\in \cF(\xi,\St_S,K_{S_0})}
\tr \pi^{\mathfrak{S}}\left(\phi^{\mathfrak{S}}, \mu^{\can,{\mathfrak{S}}}\right)=\mu^{\can}_{S_0}(K_{S_0})^{-1}T_\elp(\phi,\mu^{\can,\EP}).$$
\end{lem}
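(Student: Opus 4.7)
The plan is to parallel the proof of Lemma \ref{l:LHS=T_ell} while absorbing the one extra difficulty caused by Casselman's theorem, namely that $f_{\St_v}$ detects both the trivial and the Steinberg representation at each $v\in S$. First I would unfold the sum on the left-hand side using the local factors of $\phi$: by the pseudo-coefficient property one has $\tr \St_v(f_{\St_v})=1$ for every $v\in S$, by Vogan--Zuckerman $\tr \pi_\infty(f_\xi)=1$ exactly when $\pi_\infty\in\Pi_\infty(\xi)$ and vanishes otherwise, and $\tr \pi_{S_0}(\triv_{K_{S_0}})=\mu^{\can}_{S_0}(K_{S_0})\dim(\pi_{S_0})^{K_{S_0}}$. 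Collecting these factors, the left-hand side rewrites as $\mu^{\can}_{S_0}(K_{S_0})^{-1}$ times the sum over those $\pi\in\cA_\disc(G)$ with $\pi_v\simeq \St_v$ for all $v\in S$, $\pi_\infty\in\Pi_\infty(\xi)$, and $\pi^\fkS$ unramified, of $m_\disc(\pi)\tr \pi(\phi,\mu^{\can,\EP})$.

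Next I would invoke the simple trace formula \eqref{e:simple-TF} with test function $\phi$. Its hypotheses from \S\ref{sub:simple-TF} hold: $f_\xi$ is, up to a nonzero scalar, an Euler--Poincar\'e function at infinity; and at each $v\in S$, $f_{\St_v}=(-1)^{q(G_v)}\phi^{\EP}_v$ is cuspidal because Kottwitz's Euler--Poincar\'e functions have orbital integrals supported on elliptic semisimple classes \cite{Kot88}. If the center of $G(F_S)$ or of $G(F_\infty)$ is not compact, one runs the same argument with the simple trace formula with fixed central character indicated in Remarks \ref{r:non-cpt-center} and \ref{r:nontrivial-center}. This yields $T_\elp(\phi,\mu^{\can,\EP})=T_\disc(\phi,\mu^{\can,\EP})$.

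The only step going beyond Lemma \ref{l:LHS=T_ell}, and the main obstacle, is to verify that the full expansion of $T_\disc(\phi)$ agrees with the restricted sum recorded in the first paragraph, i.e.\ that trivial-representation contributions at the places of $S$ do not appear. By Casselman's theorem, $\tr \pi_v(f_{\St_v})\neq 0$ forces $\pi_v\in\{\triv_v,\St_v\}$ for every $v\in S$. Suppose $\pi_v\simeq\triv_v$ for some $v\in S$. Since $G$ is simple and $S\cap\Ram(G)=\emptyset$, the group $G$ is isotropic and quasi-split over $F_v$, so $G(F_v)$ is noncompact modulo center; Lemma \ref{l:1-dim} then forces $\pi$ to be globally one-dimensional, and in particular $\pi_\infty$ is a unitary character of $G(F_\infty)$. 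But $\xi$ has regular highest weight, so $\Pi_\infty(\xi)$ consists of discrete series with regular infinitesimal character and contains no one-dimensional representation; hence $\tr \pi_\infty(f_\xi)=0$. The resulting vanishing identifies $\mu^{\can}_{S_0}(K_{S_0})^{-1}T_\elp(\phi,\mu^{\can,\EP})$ with the left-hand side, completing the proof.
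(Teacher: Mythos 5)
Your proposal is correct and follows essentially the same route as the paper: unfold $T_\disc(\phi)$, invoke the simple trace formula using that $f_{\St_S}$ is cuspidal, and then use Casselman's theorem together with Lemma \ref{l:1-dim} and the regularity of $\xi$ to rule out trivial local components at $v\in S$. The only cosmetic difference is that the paper also treats the case $G(F_v)$ compact modulo center (where $\St_v$ coincides with $\triv_v$, so there is nothing to rule out), whereas you observe that this case never arises under the standing assumption $S\cap\Ram(G)=\emptyset$ with $G$ simple; both versions are fine.
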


\begin{proof}
  We have the simple trace formula \eqref{e:simple-TF} for $\phi$. Indeed $f_{\St_S}$ enjoys property (ii) in \S\ref{sub:simple-TF}.
 It suffices to show that the left hand side equals $\mu^{\can}_{S_0}(K_{S_0})^{-1} T_{\disc}(\phi,\mu^{\can,\EP})$, which expands as %
\begin{equation}\label{e:LHS=T_ell-Stcase}
\sum_{\pi\in \cA_{\disc}(G)} m_{\disc}(\pi) \tr \pi^{\mathfrak{S}}(\phi^{\mathfrak{S}})\tr \pi_S(f_{\St_S})
\frac{\tr \pi_{S_0}(\triv_{K_{S_0}})}{\mu^{\can}_{S_0}(K_{S_0})} \tr \pi_\infty(f_\xi).
\end{equation}
Suppose that the summand for $\pi$ is nonzero. Then $\pi_\infty$ cannot be one-dimensional by regularity of $\xi$.
Let $v\in S$ so that $\tr \pi_v(f_{\St_v})\neq 0$. If $G(F_v)$ is compact modulo center
then $\St_v$ is the trivial representation, so $\pi_v\simeq \St_v$ by Casselman's
theorem above. If $G(F_v)$ is not compact modulo center then Lemma \ref{l:1-dim}
tells us that $\pi_v$ cannot be one-dimensional so $\pi_v\simeq \St_v$ by the same
theorem. In either case $\tr \pi_v(f_{\St_v})=1$. We see that
\eqref{e:LHS=T_ell-Stcase} equals the left hand side of the lemma.
\end{proof}

\begin{thm}\label{th:steinbergs}
There exist real constants
$\nu_\infty,A>0$ and an integer $B\in \Z_{\ge 1}$ such that for every $\xi\in \Irr^{\reg}_C(G(F_\infty))$, for every $\underline\kappa=(\kappa_v)_{v\notin {\mathfrak{S}}}$, and for every $\phi^{\mathfrak{S}}\in \cH^{\ur,\le \underline{\kappa}}(G(\A_F^{\mathfrak{S}}))$ which is the characteristic function of a bi-$K^{\mathfrak{S}}$-invariant compact subset,
  \begin{eqnarray}
 \frac{1}{\tau'(G,S)} \sum_{\pi\in \cF(\xi,\St_S,K_{S_0})}
\tr \pi^{\mathfrak{S}}(\phi^{\mathfrak{S}})
&=& \frac{(-1)^{q(G_S)+q(G_\infty)}d(G_\infty)\dim \xi}{\mu^{\can}_{S_0}(K_{S_0})} \sum_{z\in Z(F)\cap K_{S_0}} \omega_\xi(z)\phi^{\mathfrak{S}}(z)\nonumber\\
&&+~ O(q_S^{-B}\dim(\xi)^{1-\nu_\infty} Q^{A\underline{\kappa}} ).\nonumber
\end{eqnarray}
  The implicit constant in $O(\cdot)$ depends on $G$, $S_0$, $K_{S_0}$, and $C$ (but is independent of $\xi$, $S$, $\underline{\kappa}$, and $\phi^{\mathfrak{S}}$).

\end{thm}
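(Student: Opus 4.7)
The plan is to proceed in direct analogy with the proof of Theorem~\ref{t:disc-series-case}, substituting the Euler--Poincar\'e function $f_{\St_S}$ for $f_{\sigma_S}$ and relying on a volume-driven power saving in place of the deep bound of Theorem~\ref{t:asymptotic-orb-int}. By Lemma~\ref{l:LHS=T_ell-Stcase}, the left hand side multiplied by $\tau'(G,S)\mu^{\can}_{S_0}(K_{S_0})\cdot\tau'(G,S)^{-1}$ equals $\mu^{\can}_{S_0}(K_{S_0})^{-1}T_\elp(\phi,\mu^{\can,\EP})$, where
$$T_\elp(\phi,\mu^{\can,\EP})=\sum_{\gamma\in G(F)/\sim,\,\elp}\iota(\gamma)^{-1}\mu^{\can,\EP}_{I_\gamma}(I_\gamma)\,O^{G(\A_F)}_\gamma(\phi),$$
and $\phi=\phi^\fkS f_{\St_S}\triv_{K_{S_0}}f_\xi$. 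I would split the geometric sum into the central part ($\gamma=z\in Z(F)$) and the noncentral part. For central $z$ one has $\mu^{\can,\EP}_G(G)=\tau'(G,S)$, $O_z(\phi^\fkS)=\phi^\fkS(z)$, and $f_{\St_S}(z)=(-1)^{q(G_S)}$ by Harish--Chandra's Plancherel theorem combined with $\phi^{\EP}_v(1)=1$ and the fact that $\St_v$ has trivial central character on $Z(F)\cap K_{S_0}$ for the simple $G$ at hand; Proposition~\ref{prop:bounds-prelim}(ii) yields $f_\xi(z)=(-1)^{q(G_\infty)}\omega_\xi(z)\,d(G_\infty)\dim\xi$. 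Combining these and dividing by $\tau'(G,S)\mu^{\can}_{S_0}(K_{S_0})$ produces the main term of the theorem.

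The noncentral contribution gives the error. The crucial new input, replacing the asymptotic bound of Theorem~\ref{t:asymptotic-orb-int}, is the horizontal volume estimate of Lemma~\ref{l:bound-volume}(ii): setting $\beta(\gamma):=\tfrac12\bigl[(\dim G-\rk G)-(\dim I_\gamma-\rk I_\gamma)\bigr]$,
$$\frac{|\mu^{\can,\EP}_{I_\gamma}(I_\gamma)|}{|\mu^{\can,\EP}_G(G)|}\le c_0\,q_S^{-\beta(\gamma)}\prod_{v\in\Ram(I_\gamma)}q_v^{A_2}.$$
For noncentral semisimple $\gamma$ one has $\dim I_\gamma<\dim G$ while $\rk I_\gamma\le\rk G$, so $\beta(\gamma)>0$; since $\dim I_\gamma$ and $\rk I_\gamma$ take only finitely many values, the positive rational $\min_{\gamma\notin Z}\beta(\gamma)$ yields an integer $B\in\Z_{\ge 1}$ (after absorbing a bounded factor) that supplies the decisive saving $q_S^{-B}$. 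The remaining local orbital integrals are then bounded in the style of Theorem~\ref{t:disc-series-case}: at $v\in S$, Kottwitz~\cite[Thm.~2]{Kot88} shows that $O_\gamma(\phi^{\EP}_v)$ vanishes unless $\gamma$ is elliptic in $G(F_v)$ and is then bounded by an absolute constant independent of $v$; at infinity, $D_\infty(\gamma)^{1/2}|O_\gamma(f_\xi)|=O(\dim\xi^{1-\nu_\infty})$ by Proposition~\ref{prop:bounds-prelim}(ii); at unramified places outside $\fkS$, Proposition~\ref{prop:bounds-prelim}(iii)--(iv) give polynomial-in-$Q^{\underline\kappa}$ bounds; and Proposition~\ref{prop:bounds-prelim}(i) counts the contributing conjugacy classes as $O(Q^{A_1\underline\kappa})$.

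Assembling these bounds exactly as in the derivation of~\eqref{e:disc-series-case}, with the usual use of \eqref{e:product-1-alpha(gamma)}--\eqref{e:bound-1-alpha(gamma)} to absorb the factors $q^{A_2}_{S_\gamma'}$ into a power of $Q^{\underline\kappa}$, produces the claimed error $O(q_S^{-B}\dim(\xi)^{1-\nu_\infty}Q^{A\underline\kappa})$. The main obstacle will be making sure the $q_S^{-B}$ saving is not cancelled by stray factors: one needs $\Ram(I_\gamma)\cap S=\emptyset$ for every $\gamma$ actually contributing, which follows from the hypothesis $S\cap\Ram(G)=\emptyset$ together with the support properties of $\phi^{\EP}_v$ forcing $\gamma$ to lie in a bounded neighbourhood of the identity at each $v\in S$; and one needs Kottwitz's local bound at $v\in S$ to be uniform in the residue characteristic, which is built into his explicit construction of $\phi^{\EP}_v$ as a signed linear combination of characteristic functions of parahoric subgroups normalised by their volumes.
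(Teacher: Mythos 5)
Your overall architecture matches the paper's: apply Lemma~\ref{l:LHS=T_ell-Stcase}, extract the main term from $\gamma\in Z(F)$, and drive the error bound by the negative exponent of $q_S$ coming from Lemma~\ref{l:bound-volume}(ii). But there are two issues, one of which is a genuine gap.

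The serious problem is your claim that $\Ram(I_\gamma)\cap S=\emptyset$ ``follows from the hypothesis $S\cap\Ram(G)=\emptyset$ together with the support properties of $\phi^{\EP}_v$ forcing $\gamma$ to lie in a bounded neighbourhood of the identity at each $v\in S$.'' This is not correct: Kottwitz's $\phi^{\EP}_v$ is an alternating sum of normalized characteristic functions of \emph{parahoric} subgroups, so its support is a union of maximal compacts, not a small neighbourhood of $1$; and even if $\gamma$ lies in a parahoric, its connected centralizer can perfectly well be a ramified torus, so $\Ram(I_\gamma)$ can meet $S$. The paper never asserts otherwise. Instead it accepts that $\Ram(I_\gamma)\subset\Ram(G)\cup S_\gamma$ can hit $S$, keeps the factor $\prod_{v\in\Ram(G)\cup S_\gamma}q_v^{A_2}$ from Lemma~\ref{l:bound-volume}, and then (and this is the step you are missing) shows, via the product formula $\prod_v|1-\alpha(\gamma)|_v=1$ together with the local bound $|1-\alpha(\gamma)|_v\le q_v^{-1/(w_Gs_G)}$ for $v\in S\cap S_\gamma$ as well as for $v\in S'_{\gamma,0}$, that $q_{S_\gamma}=O(Q^{w_G^2s_G^2B_1\underline\kappa})$. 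Since the factor $q_{S_\gamma}^{A_2}$ (which includes the $v\in S$ part) is then absorbed into $Q^{A\underline\kappa}$, the $q_S^{-B}$ saving survives. In your writeup you only absorb $q_{S'_\gamma}^{A_2}$ (places outside $\fkS$) and sidestep the $S$-part by the false assumption above; without the additional argument for $v\in S\cap S_\gamma$, the factor $\prod_{v\in S\cap S_\gamma}q_v^{A_2}$ could swamp $q_S^{-B}$, and the error bound would not hold uniformly in $S$.

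A secondary, more minor point: you bound $|O_\gamma(\phi^{\EP}_v)|$ by an absolute constant but never mention the Weyl discriminant at the places in $S$. The global product $\prod_v D_v(\gamma)^{1/2}=1$ is used implicitly when assembling the local bounds, so what is actually needed is $D_S(\gamma)^{1/2}|O_\gamma(f_{\St_S})|\le 1$ (inequality~\eqref{e:St-case-2} in the paper). This is true because the orbital integral is supported on elements $\gamma$ elliptic at every $v\in S$, which forces $|1-\alpha(\gamma)|_v\le 1$ for all roots and hence $D_S(\gamma)\le 1$, but you should state this; as written, a factor $D_S(\gamma)^{-1/2}$ is left dangling in the assembled estimate.
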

\begin{proof}
  The proof proceeds exactly as for Theorem \ref{t:disc-series-case}. The main term coming from $\gamma\in Z(F)$ is computed similarly. (Note that $f_{\St_S}(z)=(-1)^{q(G_S)}=\deg(\St_S)$ for $z\in Z(F_S)$. So $\deg(\sigma_S)$ in \eqref{e:in-thm-disc-series-case} is replaced by $(-1)^{q(G_S)}$ here.) The issue is to bound the contribution from noncentral elements. To explain the mild modifications in the argument we freely use the notation from the proof there. It suffices to show the following analogue of \eqref{e:disc-series-case} for uniform constants $\nu_\infty,A>0$ and $B\in \Z_{\ge1}$:
    \begin{equation}\label{e:St-case}
    \frac{1}{\tau'(G,S)}
  \sum_{\gamma\in \mathcal{Y}(\underline\kappa) \atop \gamma\notin Z(F)} \left|\mu^{\can,\EP}_{I_\gamma}(I_\gamma)\orbi^{G(\A_F)}_\gamma(\phi^{\mathfrak{S}} f_{\St_S} \triv_{K_{S_0}} f_\xi)\right|
  = O(q_S^{-B}\dim(\xi)^{1-\nu_\infty} Q^{A\underline{\kappa}} ).
  \end{equation}
  The only nontrivial change is to replace \eqref{e:disc-series-case-1} and \eqref{e:disc-series-case-2} with the following inequalities for noncentral elements $\gamma\in \mathcal{Y}(\underline\kappa)$ for suitable uniform constants $c_0,A_5>0$:
  \begin{eqnarray}
    \frac{|\mu^{\can,\EP}_{I_\gamma}(I_\gamma)|}{\tau'(G,S)} &\le& c_0 q_S^{-B} Q^{A_5\underline{\kappa}},\label{e:St-case-1}\\
  D_S(\gamma)^{1/2}  |\orbi_\gamma(f_{\St_S})| &\le & 1 \label{e:St-case-2}.
  \end{eqnarray}
  The bounds for orbital integrals away from $S$ as well as the rest of the proof are exactly the same as in the proof of Theorem \ref{t:disc-series-case}. So we content ourselves with justifying the two inequalities, beginning with \eqref{e:St-case-2}. (Along the way we obtain a bound on $q_{S_\gamma}$. Evidently the same bound works for $q_{S'_{\gamma,0}}$. Together with \eqref{e:disc-series-case-2}, this is used to bound the orbital integral away from $\mathfrak{S}$.)

   As we recalled in \eqref{e:orb-int-EP}, $|\orbi_\gamma(f_{\St_S})|$ is bounded by $1$ and nonzero only on $\gamma$ that is elliptic in $G(F_v)$ for all $v\in S$. Then $\gamma$ is contained in an $F_v$-anisotropic maximal torus $T_v$ of $G(F_v)$ so $|\alpha(\gamma)|_v=1$ for all absolute roots $\alpha$ of $T_v$ in $G$ by compactness, implying that $|1-\alpha(\gamma)|_v\le 1$. Therefore $D_S(\gamma)\le 1$ and \eqref{e:St-case-2} holds.

  It remains to check the uniform bound \eqref{e:St-case-1} for noncentral elements $\gamma$.
   If $\gamma$ is not in the center of $G$, we have that $\frac{1}{2}(\dim I_\gamma-\rk I_\gamma)-\frac{1}{2}(\dim G-\rk G)$ is a negative integer. Take $B$ to be the maximum of such integers as $\gamma$ varies. Our initial assumption tells us that $\Ram(G)\subset S_0$.   At each $v\in S_0$ we define $\delta_v<\infty$ as in \eqref{eq:delta_v} or $\delta_v=1$, whichever is bigger. We define $d_G,s_G,w_G\in \Z_{>0}$ as in the proof of Theorem \ref{t:disc-series-case}.

     From Lemma \ref{l:bound-volume} and the fact that
     $$\Ram(I_\gamma)\subset \Ram(G)\cup S_\gamma \cup \{v\notin\mathfrak{S}: \kappa_v\neq 0\}
     \subset S_0 \cup S_\gamma \cup \{v\notin\mathfrak{S}: \kappa_v\neq 0\},$$
    it follows that
  \begin{eqnarray}
  \frac{|\mu^{\can,\EP}_{I_\gamma}(I_\gamma)|}{\tau'(G,S)}& \le & c_0 q_S^{\frac{1}{2}(\dim I_\gamma-\rk I_\gamma)-\frac{1}{2}(\dim G-\rk G)}\prod_{v\in \Ram(I_\gamma)} q_{v}^{A_2}. \nonumber\\
  & \le & c_0 q_S^{-B} q_{S_0}^{A_2} q_{S_\gamma}^{A_2}Q^{A_2} = O(q_S^{-B}q_{S_\gamma}^{A_2}Q^{A_2}).\label{eq:mu-over-tau'}
  \end{eqnarray}

  We will bound $q_{S_\gamma}$ independently of $\gamma$, by considering the partition
  $$S_\gamma=(S_\gamma\cap S)\coprod (S_\gamma\cap S_0) \coprod (S'_\gamma\backslash S'_{\gamma,0}) \coprod S'_{\gamma,0}.$$
    When $v\in S_\gamma\cap S$, we have $|1-\alpha(\gamma)|_v\le 1$ as explained above, and the inequality is strict for some $\alpha\in \Phi_\gamma$, thus as in  \eqref{e:bound-1-alpha(gamma)}, we have
    $$\prod_{\alpha\in \Phi_\gamma} |1-\alpha(\gamma)|_v\le q_v^{-\frac{1}{w_Gs_G}},\qquad v\in S_\gamma\cap S.$$
     At $v\in S_\gamma\cap S_0$ we have $|1-\alpha(\gamma)|_v\le \delta_v$. For $v\in S'_\gamma\backslash S'_{\gamma,0}$ and $v\in S'_{\gamma,0}$ the bounds of \eqref{e:bound-1-alpha(gamma)-by-Q} and \eqref{e:bound-1-alpha(gamma)} remain valid without change. Applying these bounds to the product formula for $1-\alpha(\gamma)$ with $\alpha\in \Phi_\gamma$ (compare with \eqref{e:product-1-alpha(gamma)})
  \begin{eqnarray*}
  1&=&\prod_{\alpha\in \Phi_\gamma} \prod_v |1-\alpha(\gamma)|_v ~\le ~\delta_\infty^{|\Phi_\gamma|} \prod_{\alpha\in \Phi_\gamma} \prod_{v\in S_\gamma} |1-\alpha(\gamma)|_v \\
  & \le &(\delta_\infty\prod_{v\in S_0} \delta_v)^{d_G} q_{S_{\gamma}}^{-\frac{1}{w_Gs_G}} \left(\prod_{v\in S'_\gamma\backslash S'_{\gamma,0}}q_v^{\frac{1}{w_Gs_G}} \right) Q^{d_G  B_1\underline\kappa}.
   \end{eqnarray*}
      Increasing $B_1$ if necessary, we can disregard the second bracketed term (the product over $v\in S'_\gamma\backslash S'_{\gamma,0}$). We deduce that $q_{S_\gamma}=O(Q^{d_G w_Gs_G B_1\underline\kappa})$.
      Using this bound in \eqref{eq:mu-over-tau'} we conclude:
      $$\frac{|\mu^{\can,\EP}_{I_\gamma}(I_\gamma)|}{\tau'(G,S)}= O\left(q_S^{-B}q_{S_\gamma}^{A_2}Q^{A_2}\right)
= O\left(q_S^{-B} Q^{(d_Gw_Gs_G A_2 B_1+A_2)\underline\kappa} \right).$$

\end{proof}

  Denote by $\Xi_1$ (resp. $\Xi_{\neq1}$) the subset of $\xi\in \Irr^{\alg}_C(G(F_\infty))$ whose central character is trivial (resp.
non-trivial).

\begin{cor}\label{c:St-case}
  We have the limit multiplicity formulas
\begin{eqnarray}
  \lim_{q_S\dim(\xi)\ra\infty\atop \xi\in \Xi_{\neq 1}} \frac{m(\xi,\St_S,K_{S_0})}{d(G_\infty)\dim(\xi) \tau'(G,S)} &=&0,\nonumber\\
\lim_{q_S\dim(\xi)\ra\infty\atop \xi\in \Xi_{ 1}} \frac{(-1)^{q(G_S)}m(\xi,\St_S,K_{S_0})}{d(G_\infty)\dim(\xi) \tau'(G,S)}
&= &\frac{(-1)^{q(G_\infty)} |Z(F)\cap K_{S_0}K^{\mathfrak{S}}|}{\mu^{\can}_{S_0}(K_{S_0})}.\label{e:mult-St-case}
\end{eqnarray}
More precisely, for each $\epsilon>0$, there exists $\delta_\epsilon>0$ with the following property: for every finite set of finite places
$S$ such that $S\cap \Ram(G)=\emptyset$ and for every $\xi\in \Xi_{\neq 1}$ such that $q_S\dim(\xi)>\delta_\epsilon$ (while $S_0$ is fixed),
we have $|m(\xi,\St_S,K_{S_0})|<\epsilon \cdot d(G_\infty)\dim(\xi) \tau'(G,S)$. The second limit formula is interpreted in a similar way.
\end{cor}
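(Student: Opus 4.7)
The plan is to specialize Theorem~\ref{th:steinbergs} to the test function $\phi^{\mathfrak{S}}=\triv_{K^{\mathfrak{S}}}$ with $\underline{\kappa}=0$. Since each hyperspecial factor $K_v^{\hs}$ carries volume one under the canonical measure, we have $\tr\pi^{\mathfrak{S}}(\triv_{K^{\mathfrak{S}}})=\dim(\pi^{\mathfrak{S}})^{K^{\mathfrak{S}}}=1$ for every $\pi\in\cF(\xi,\St_S,K_{S_0})$, so the left-hand side of the theorem collapses to $m(\xi,\St_S,K_{S_0})$. On the right-hand side, $\phi^{\mathfrak{S}}(z)=\triv_{K^{\mathfrak{S}}}(z)$ trims the summation from $z\in Z(F)\cap K_{S_0}$ down to $z\in Z(F)\cap K_{S_0}K^{\mathfrak{S}}$, producing the character sum $\sum_{z\in Z(F)\cap K_{S_0}K^{\mathfrak{S}}}\omega_\xi(z)$ over a finite abelian group.

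I would then split by the two cases, interpreting the condition ``$\omega_\xi$ trivial'' in the definition of $\Xi_1$ and $\Xi_{\neq 1}$ as referring to the restriction of $\omega_\xi$ to the finite abelian group $Z(F)\cap K_{S_0}K^{\mathfrak{S}}$, in parallel with the convention adopted for $\mathcal{P}_=$ and $\mathcal{P}_{\neq}$ in Corollary~\ref{cor:limit-multiplicity}. For $\xi\in\Xi_1$ the restriction is trivial, the character sum equals $|Z(F)\cap K_{S_0}K^{\mathfrak{S}}|$, and the main term of Theorem~\ref{th:steinbergs} delivers exactly the numerator of~\eqref{e:mult-St-case}. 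For $\xi\in\Xi_{\neq 1}$, character orthogonality on the finite abelian group forces the sum to vanish, so only the error term survives. In either case, dividing by $d(G_\infty)\dim(\xi)\tau'(G,S)$ reduces the task to controlling the normalized error $q_S^{-B}\dim(\xi)^{-\nu_\infty}$ as $q_S\dim(\xi)\to\infty$.

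The last step is the elementary inequality
\[
q_S^{-B}\dim(\xi)^{-\nu_\infty}\le(q_S\dim(\xi))^{-\min(B,\nu_\infty)/2}
\qquad\text{for }q_S,\dim(\xi)\ge 1,
\]
proved by separating the cases $q_S\ge\sqrt{q_S\dim(\xi)}$ and $\dim(\xi)\ge\sqrt{q_S\dim(\xi)}$, noting that at least one must hold, and in each case using the trivial bound $\le 1$ for the other factor. This immediately gives both limit formulas; for the effective threshold, given $\epsilon>0$ one chooses $\delta_\epsilon=(C'/\epsilon)^{2/\min(B,\nu_\infty)}$, where $C'$ denotes the implicit constant inherited from Theorem~\ref{th:steinbergs}. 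The only step that requires mild care is the sign bookkeeping in the $\Xi_1$ case: the factor $(-1)^{q(G_S)+q(G_\infty)}$ from the main term of Theorem~\ref{th:steinbergs} combines with the multiplier $(-1)^{q(G_S)}$ in~\eqref{e:mult-St-case} to yield the correct sign $(-1)^{q(G_\infty)}$. I do not foresee any substantive obstacle, since the real work is already packaged into Theorem~\ref{th:steinbergs}; the present corollary is essentially a clean dichotomy of the character sum on $Z(F)\cap K_{S_0}K^{\mathfrak{S}}$ together with the elementary error bound above.
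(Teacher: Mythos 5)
Your proof is correct and follows the same route as the paper, which disposes of this corollary in one line by observing that it follows from Theorem~\ref{th:steinbergs} exactly as Corollary~\ref{cor:limit-multiplicity} does from Theorem~\ref{t:disc-series-case} (i.e.\ specialize to $\phi^{\mathfrak S}=\triv_{K^{\mathfrak S}}$ and $\underline\kappa=0$). Your observation that the definition of $\Xi_1,\Xi_{\neq1}$ should be read as triviality/nontriviality of $\omega_\xi$ restricted to the finite abelian group $Z(F)\cap K_{S_0}K^{\mathfrak S}$, parallel to the convention for $\mathcal P_{=},\mathcal P_{\neq}$, is the right reading: that is precisely what makes the character sum either $|Z(F)\cap K_{S_0}K^{\mathfrak S}|$ or $0$ by orthogonality, and it is a helpful clarification that the paper leaves implicit.
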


\begin{proof}
  This follows from the preceding theorem exactly as Corollary \ref{cor:limit-multiplicity} does from Theorem \ref{t:disc-series-case}.
\end{proof}

\begin{ex}\label{ex:PGL2}
Consider the case when $G=\PGL(2)$ over a totally real field $F$ and $S_0=\emptyset$. Then $d(G_\infty)=1$, $q(G_\infty)=(-1)^{[F:\Q]}$, and the right hand side of \eqref{e:mult-St-case} is $(-1)^{[F:\Q]}$. (When $F=\Q$, corresponding to classical holomorphic modular forms of even weight $k\in \Z_{\ge 2}$\footnote{The regularity condition on $\xi$ excludes $k=2$ but we can easily work out the case $k=2$. The simple trace formula is still valid for the same test function $\phi$. The only extra work is to bound the extra spectral terms from one-dimensional automorphic representations, cf. Lemma \ref{l:1-dim}, which do not show up when $k>2$.}  with trivial Nebentypus character is the representation $\xi_k=\mathrm{Sym}^{k-2}$ of $\PGL(2)$ so that $\dim(\xi_k)=k-1$. Similarly $\dim(\xi)$ is computed for general $F$.) Gross's motive for $\PGL(2)$ is $\Q(-1)$. We can easily compute $\tau(G)=2$ and
$$\tau'(G,S)=\zeta_F(-1) 2^{1-[F:\Q]}\prod_{v\in S} \frac{1-q_v}{2},$$ where $\zeta_F$ is the Dedekind zeta function (in particular $\zeta_\Q(-1)=-1/12$). Since $\dim G-\rk G=2$, we can take $B=1$ in Theorem \ref{th:steinbergs} by its proof and $\nu_\infty=1$ by Remark \ref{r:epsilon_infty}. So we have the asymptotic formula
\begin{equation}\label{e:example-ILS}
 m(\xi,\St_S,\emptyset) = |\zeta_F(-1)|2^{1-[F:\Q]-|S|} \prod_{v\in S} (q_v-1) + O(1).
 \end{equation}
 Here the bound $O(1)$ comes from $q_S^{-1}\prod_{v\in S}(q_v-1)=O(1)$.
In the special case when $F=\Q$ and $\xi_k=\mathrm{Sym}^{k-2}$ is fixed, we have
 $$m(\xi_k,\St_S,\emptyset)\sim \frac{(k-1)\phi(q_S)}{12\cdot 2^{|S|}}\quad\mbox{as}~q_S\ra\infty,$$
where $\phi(\cdot)$ is Euler's phi-function. The difference from \cite[Cor 2.14]{ILS00} by the factor $2^{|S|}$ is explained as follows. At each $v\in S$ there is another square-integrable representation $\St'_v$ such that $\St_v$ and $\St'_v$ differ as representations of $\GL_2(\Q_v)$ by the unique nontrivial unramified quadratic character of $\Q_v^\times$. The result in \emph{loc. cit.} can be interpreted as $\sum_{\sigma_S} m(\xi_k,\sigma_S,\emptyset)$, where the sum runs over $\sigma_S$ such that $\sigma_v\in \{\St_v,\St'_v\}$. Thus their count is $2^{|S|}$ times ours.
In this special case, observe that our $O(1)$ in \eqref{e:example-ILS} improves on the error bound $O((kq_S)^{2/3})$ obtained in \emph{loc. cit.}

\end{ex}

\begin{cor}\label{c:St-case-restated} Suppose that $G$ is a split simple reductive group over $F$. There exist constants $\epsilon,A,B>0$ such that for every $\xi\in \Irr^{\reg}_C(G(F_\infty))$ and for every $\phi^{\mathfrak{S}}=\triv_{K^{\mathfrak{S}} g K^{\mathfrak{S}}}\in \cH^{\ur,\le \underline{\kappa}}(G(\A_F^{\mathfrak{S}}))$ with some $g\in G(\A_F^{\mathfrak{S}})$,
 $$
  \sum_{\pi\in \cF(\xi,\St_S,K_{S_0})}
\tr \pi^{\mathfrak{S}}(\phi^{\mathfrak{S}})
=  m(\xi,\St_S,K_{S_0})\sum_{z\in Z(F)\cap K_{S_0}} \phi^{\mathfrak{S}}(z)
+ O(m(\xi,\St_S,K_{S_0})^{1-\nu}q_S^{-B} \|\phi^{\mathfrak{S}}\|_1^{A} ).
$$
  The implicit constant in $O(\cdot)$ depends on $G$, $S$, $S_0$, $K_{S_0}$, and $C$ (but is independent of $\xi$ and $\phi^{\mathfrak{S}}$).
\end{cor}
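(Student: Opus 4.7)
\emph{Proof proposal.} The plan is to follow the template of the proof of Corollary \ref{c:disc-seires-case-restated}, with Theorem \ref{th:steinbergs} playing the role of Theorem \ref{t:disc-series-case} and Corollary \ref{c:St-case} playing the role of Corollary \ref{cor:limit-multiplicity}. Since $G$ is a split simple algebraic group, the center $Z(F)$ is finite, so $Z(F) \cap K_{S_0} = Z(F) \cap K_{S_0} K^{\mathfrak{S}}$. Moreover the Steinberg representation of a split reductive group has trivial central character (it is a subquotient of the unramified principal series from the trivial character of the torus), so by matching global central characters on $Z(F)$ the multi-set $\cF(\xi, \St_S, K_{S_0})$ is empty unless $\omega_\xi$ restricts trivially to $Z(F) \cap K_{S_0}$. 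In the empty case both sides of the claimed identity vanish and there is nothing to prove; in the nontrivial case, $\sum_{z} \omega_\xi(z) \phi^{\mathfrak{S}}(z) = \sum_{z} \phi^{\mathfrak{S}}(z)$.

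Next, I would apply Theorem \ref{th:steinbergs} twice: once to $\phi^{\mathfrak{S}}$, and once to $\triv_{K^{\mathfrak{S}}}$ (i.e.\ $\underline{\kappa} = 0$). Writing $M_\xi := (-1)^{q(G_S)+q(G_\infty)} d(G_\infty)/\mu^{\can}_{S_0}(K_{S_0})$, the first application gives
\[
\sum_{\pi} \tr \pi^{\mathfrak{S}}(\phi^{\mathfrak{S}}) = M_\xi\, \tau'(G,S)\, \dim(\xi) \sum_{z \in Z(F) \cap K_{S_0}} \phi^{\mathfrak{S}}(z) + E_1,
\]
with $E_1 = O(\tau'(G,S)\, q_S^{-B_0} \dim(\xi)^{1-\nu_\infty} Q^{A_0 \underline{\kappa}})$, while the second gives an estimate relating $M_\xi\, \tau'(G,S)\, \dim(\xi)$ to $m(\xi, \St_S, K_{S_0})$ (up to the factor $|Z(F) \cap K_{S_0}|$, which is bounded and uniform in $S$) with an error $E_0 = O(\tau'(G,S)\, q_S^{-B_0} \dim(\xi)^{1-\nu_\infty})$. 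Eliminating the common factor $M_\xi\, \tau'(G,S)\, \dim(\xi)$ between the two equations yields the desired main term $m(\xi, \St_S, K_{S_0}) \sum_{z} \phi^{\mathfrak{S}}(z)$ plus an additional error bounded by $O(|E_0| \cdot |\sum_z \phi^{\mathfrak{S}}(z)|)$.

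To convert the combined error into the target shape $O(m^{1-\nu} q_S^{-B} \|\phi^{\mathfrak{S}}\|_1^{A})$, I would reuse the two key estimates from the proof of Corollary \ref{c:disc-seires-case-restated}: the bound $\sum_{z \in Z(F) \cap K_{S_0}} \phi^{\mathfrak{S}}(z) = O(Q^{B'\underline{\kappa}})$ coming from \cite[Prop.~8.7]{ST:Sato-Tate}, and the Satake-theoretic inequality $Q^{\underline{\kappa}} \le c\, \|\phi^{\mathfrak{S}}\|_1^{1+c'}$ with $c,c'>0$ depending only on $G$. The factor $\tau'(G,S) \dim(\xi)^{1-\nu_\infty}$ appearing in the error must then be compared to $m^{1-\nu}$; using Corollary \ref{c:St-case} one has $|\tau'(G,S) \dim(\xi)| \asymp m / d(G_\infty)$ when $m \ge 1$, so
\[
\tau'(G,S) \dim(\xi)^{1-\nu_\infty} \ \asymp\ m \cdot \dim(\xi)^{-\nu_\infty} \ \le\ m^{1-\nu}\, \tau'(G,S)^{\nu}\, \dim(\xi)^{\nu - \nu_\infty}.
\]
Choosing $0 < \nu \le \nu_\infty$ renders $\dim(\xi)^{\nu - \nu_\infty} \le 1$, while the polynomial growth $\tau'(G,S) = O(q_S^{\alpha})$ for some fixed $\alpha > 0$ (visible from the Euler product formula for $\tau'(G,S)$, cf.\ Example \ref{ex:PGL2}) allows the factor $\tau'(G,S)^\nu$ to be absorbed by enlarging $B_0$ to $B := B_0 + \alpha\nu$.

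The principal obstacle is this third step: one has to carry the factor $q_S^{-B_0}$ from Theorem \ref{th:steinbergs} through the substitution $M_\xi\, \tau'(G,S)\, \dim(\xi) \rightsquigarrow m(\xi, \St_S, K_{S_0})$ without losing it, while simultaneously converting the $Q^{A_0 \underline\kappa}$ factors into $\|\phi^{\mathfrak{S}}\|_1^{A}$ and relating $\tau'(G,S) \dim(\xi)^{1-\nu_\infty}$ to $m^{1-\nu}$. The polynomial growth bound $\tau'(G,S) = O(q_S^\alpha)$ is what makes this book-keeping work: every appearance of $\tau'(G,S)$ (or a positive power thereof) is either harmless because it participates in the $m \asymp \dim(\xi)\tau'(G,S)$ asymptotic, or else it contributes only finitely many powers of $q_S$ that can be compensated by enlarging $B$. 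The regime $m=0$ is trivial since the multi-set is then empty.
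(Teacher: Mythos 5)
Your proposal is correct and follows the paper's intended route: the paper's own proof of this corollary is the one-line reference ``This is proved in the same way as Corollary~\ref{c:disc-seires-case-restated},'' and you faithfully unfold that template, correctly isolating the one genuinely new piece of bookkeeping (carrying the $q_S^{-B}$ gain from Theorem~\ref{th:steinbergs} through the substitution $\tau'(G,S)\dim\xi\rightsquigarrow m$ via the polynomial bound $\tau'(G,S)=O(q_S^{\alpha})$, together with the reduction of $\omega_\xi(z)$ to $1$ using the triviality of the Steinberg central character). One small slip: the exponent should be $B:=B_0-\alpha\nu$ rather than $B_0+\alpha\nu$, since absorbing $\tau'(G,S)^{\nu}$ costs a positive power of $q_S$; this stays positive for $\nu$ small, so the conclusion is unaffected.
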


\begin{proof}
  This is proved in the same way as Corollary \ref{c:disc-seires-case-restated}.
\end{proof}

As an immediate consequence of Theorem~\ref{th:steinbergs}, we deduce the existence of representations with very mild ramification (e.g. one can take $S$ to be a singleton) and fixed weight.
\begin{cor}
Let $G$ be a split simple reductive group over $F$. Fix $\xi\in \Irr^{\reg}(G(F_\infty))$. There exists a constant $q_0>0$ with the following property: for every finite set of places $S$ such that $G$ is unramified away from $S$, if $q_S>q_0$ then there exists a cuspidal automorphic representation $\pi$ such that
\begin{itemize}
  \item $\pi_\infty\in \Pi_\infty(\xi)$ (in particular it is a discrete series representation),
  \item $\pi_v$ is the Steinberg representation at each $v\in S$,
  \item $\pi$ is unramified at every finite place $v\notin S$.
\end{itemize}
\end{cor}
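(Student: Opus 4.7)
The plan is to obtain this as an immediate application of Theorem~\ref{th:steinbergs} (or equivalently Corollary~\ref{c:St-case}) at the identity of the unramified Hecke algebra, exploiting the fact that the main term grows in $q_S$ while the error decays.

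First I would choose a finite set $S_0$ of finite places, disjoint from every $S$ under consideration, large enough to contain all finite places whose residue characteristic violates the standing assumptions of \S\ref{sub:equidistribution}; the condition $S\cap\Ram(G)=\emptyset$ is automatic since $G$ is split, so in fact $\Ram(G)=\emptyset$. Set $K_{S_0}:=\prod_{v\in S_0}K_v^{\hs}$, $K^{\fkS}:=\prod_{v\notin\fkS}K_v^{\hs}$ with $\fkS:=S_\infty\cup S_0\cup S$. Apply Theorem~\ref{th:steinbergs} with $\phi^{\fkS}=\triv_{K^{\fkS}}$ (so $\underline\kappa=0$ and $Q=1$); the left-hand side reduces to $m(\xi,\St_S,K_{S_0})$, while the main term on the right becomes
\[
\frac{(-1)^{q(G_S)+q(G_\infty)}d(G_\infty)\dim(\xi)\,\tau'(G,S)}{\mu^{\can}_{S_0}(K_{S_0})}\sum_{z\in Z(F)}\omega_\xi(z),
\]
using that $Z(F)$ is finite and is contained in $K_{S_0}\cap K^{\fkS}$ (hyperspecial subgroups of split groups contain the finite center).

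The inner sum $\sum_{z\in Z(F)}\omega_\xi(z)$ is nonzero precisely when $\omega_\xi|_{Z(F)}=1$; this is the unavoidable global central-character compatibility, necessary for any such $\pi$ to exist in the first place, since $\omega_{\St_v}$ is trivial on $Z(F_v)$ ($\St_v$ being a constituent of the normalized principal series induced from the trivial character of a Borel $B$, and $\delta_B|_Z=1$). Under this compatibility the sum equals $|Z(F)|>0$, and it remains to show that the main term dominates the error $O(q_S^{-B}\dim(\xi)^{1-\nu_\infty})$. By definition $\tau'(G,S)=\tau(G)\prod_{v\in S}\mu^{\EP}_v(K_v^{\hs})$, and Gross's formulas~\cite{Gro97} (cf.\ Example~\ref{ex:PGL2}) show that $|\mu^{\EP}_v(K_v^{\hs})|$ is a polynomial in $q_v$ of positive degree for split simple $G$. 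Consequently $|\tau'(G,S)|\gg q_S$, which, with $\xi$ and $K_{S_0}$ fixed, beats the $O(q_S^{-B})$ term as soon as $q_S>q_0$ for some explicit $q_0$ depending only on $G$, $F$, $\xi$, and $K_{S_0}$; therefore $m(\xi,\St_S,K_{S_0})>0$ in that range.

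Any $\pi$ counted by this nonzero multiplicity then automatically satisfies the three requirements: $\pi_\infty\in\Pi_\infty(\xi)$ is a discrete series (by regularity of $\xi$), so $\pi$ is cuspidal; the hyperspecial factors at $S_0$ and outside $\fkS$ force $\pi$ to be unramified at every finite place not in $S$; and $\pi_v\simeq\St_v$ at each $v\in S$ by the argument of Lemma~\ref{l:LHS=T_ell-Stcase}, where the trivial alternative from Casselman's theorem is excluded because $\pi_\infty$ is not one-dimensional (Lemma~\ref{l:1-dim}). The main obstacle I anticipate is pinning down the polynomial growth of $|\tau'(G,S)|$ in $q_S$ sharply enough to dominate the $O(q_S^{-B})$ error uniformly in $S$; this is ultimately a direct computation with Gross's motivic volumes and is the only delicate step in the argument.
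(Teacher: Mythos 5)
Your proposal follows the paper's own route: plug $\phi^{\mathfrak S}=\triv_{K^{\mathfrak S}}$ into Theorem~\ref{th:steinbergs}, observe the main term is bounded away from zero while the error is $O(q_S^{-B})$, hence $\cF(\xi,\St_S,K_{S_0})$ is nonempty for $q_S$ large, and extract $\pi$.

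Two remarks on how your write-up compares with the paper. First, the paper takes $S_0=\emptyset$, whereas you insist on a nonempty $S_0$ soaking up the finite places of small residue characteristic; your choice is the more defensible one, since the Cluckers--Gordon--Halupczok bound (Prop.~\ref{prop:bounds-prelim}(iv)) quoted implicitly in the proof of Theorem~\ref{th:steinbergs} only applies at places of large residue characteristic, and nothing in the hypotheses of the corollary keeps those places out of $\A_F^{\mathfrak S}$ if $S_0=\emptyset$. Second, you flag that $\sum_{z\in Z(F)}\omega_\xi(z)$ vanishes unless $\omega_\xi|_{Z(F)}=1$; this compatibility is tacitly assumed in the corollary's statement (and swept under ``the main term is nonzero'' in the paper's own proof), so making it explicit improves on the source. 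One slip worth correcting: the left-hand side of Theorem~\ref{th:steinbergs} is already divided by $\tau'(G,S)$, so the comparison you want is between the $S$-independent constant $(-1)^{q(G_S)+q(G_\infty)}d(G_\infty)\dim(\xi)|Z(F)|/\mu^{\can}_{S_0}(K_{S_0})$ and the error $O(q_S^{-B}\dim(\xi)^{1-\nu_\infty})$; the detour through $|\tau'(G,S)|\gg q_S$ and Gross's motivic volumes is not needed (and your displayed factorization of $\tau'(G,S)$ as $\tau(G)\prod_{v\in S}\mu^{\EP}_v(K_v^{\hs})$ is not quite what the definition gives), though it does not affect the conclusion. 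Finally, the paper deduces cuspidality of $\pi$ from its Steinberg component via Labesse \cite[Prop 4.5.4]{Lab99}, whereas you invoke the discrete-series component at infinity; both are correct, but the Steinberg route matches the source.
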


\begin{proof}
  We take $S_0=\emptyset$.
  Theorem \ref{th:steinbergs} implies that $\cF(\xi, \mathrm{St}_S,\emptyset)$ is nonempty if $q_S$ is sufficiently large since the main term in the right hand side of the theorem is nonzero. Any $\pi\in \cF(\xi,\mathrm{St}_S,\emptyset)$ is (not only discrete but) cuspidal since it has a Steinberg component, cf. \cite[Prop 4.5.4]{Lab99}.
\end{proof}

\newcommand{\etalchar}[1]{$^{#1}$}
\def\cprime{$'$} \def\cprime{$'$} \def\cprime{$'$} \def\cprime{$'$}
  \def\cprime{$'$} \def\cprime{$'$}
\providecommand{\bysame}{\leavevmode\hbox to3em{\hrulefill}\thinspace}
\providecommand{\MR}{\relax\ifhmode\unskip\space\fi MR }
\providecommand{\MRhref}[2]{%
  \href{http://www.ams.org/mathscinet-getitem?mr=#1}{#2}
}
\providecommand{\href}[2]{#2}

\end{document}